\newcommand\nix{\,\cdot\,}
\newcommand\vV{\vec V}
\newcommand\vS{\vec S}
\newcommand\reg{\G}
\newcommand\G{\vec G}
\newcommand\vH{\vec H}
\numberwithin{equation}{section}
\renewcommand{\vec}[1]{\boldsymbol{#1}}
\newcommand\SIGMA{\vec\sigma}
\newcommand\TAU{\vec\tau}
\newtheorem{definition}{Definition}[section]
\newtheorem{claim}[definition]{Claim}
\newtheorem{theorem}[definition]{Theorem}
\newtheorem{lemma}[definition]{Lemma}
\newtheorem{corollary}[definition]{Corollary}
\newtheorem{fact}[definition]{Fact}
\newcommand\cA{\mathcal{A}}
\newcommand\cB{\mathcal{B}}
\newcommand\cD{\mathcal{D}}
\newcommand\cE{\mathcal{E}}
\newcommand\cS{\mathcal{S}}
\newcommand\cP{\mathcal{P}}
\newcommand\cX{\mathcal{X}}
\def\cE{{\mathcal E}}
\newcommand\eps{\varepsilon}
\newcommand\del{\delta}
\newcommand\Erw{\mathrm{E}}
\newcommand{\vecone}{\vec{1}}
\newcommand{\Po}{{\rm Po}}
\newcommand{\Bin}{{\rm Bin}}
\newcommand{\Be}{{\rm Be}}
\newcommand\TV[1]{\left\|{#1}\right\|_{\mathrm{TV}}}
\newcommand\bc[1]{\left({#1}\right)}
\newcommand\bcfr[2]{\bc{\frac{#1}{#2}}}
\newcommand{\bck}[1]{\left\langle{#1}\right\rangle}
\newcommand\brk[1]{\left\lbrack{#1}\right\rbrack}
\newcommand\abs[1]{\left|{#1}\right|}
\newcommand\RR{\mathbb{R}}
\newcommand{\Whp}{A.a.s.}
\newcommand{\whp}{a.a.s.}
\newcommand{\stacksign}[2]{{\stackrel{\mbox{\scriptsize #1}}{#2}}}
\newcommand{\tensor}{\otimes}
\newcommand{\Erdos}{Erd\H{o}s}
\newcommand{\Renyi}{R\'enyi}
\newcommand{\Lovasz}{Lov\'asz}
\newcommand{\Szemeredi}{Szemer\'edi}
\newcommand\pr{\mathrm{P}} 
\newcommand\Lem{Lemma}
\newcommand\Prop{Proposition}
\newcommand\Thm{Theorem}
\newcommand\Cor{Corollary}
\newcommand\Sec{Section}
\newcommand\Chap{Chapter}
\begin{document}

\title{Belief Propagation on replica symmetric random factor graph models}

\author[Coja-Oghlan and Perkins]{Amin Coja-Oghlan$^{*}$, Will Perkins}
\thanks{$^{*}$The research leading to these results has received funding from the European Research Council under the European Union's Seventh 
Framework Programme (FP7/2007-2013) / ERC Grant Agreement n.\ 278857--PTCC}

\address{Amin Coja-Oghlan, {\tt acoghlan@math.uni-frankfurt.de}, Goethe University, Mathematics Institute, 10 Robert Mayer St, Frankfurt 60325, Germany.}

\address{Will Perkins, {\tt math@willperkins.org}, School of Mathematics, University of Birmingham, Edgbaston, Birmingham, UK.}

\begin{abstract}
\noindent
According to physics predictions, the free energy of random factor graph models that satisfy a certain ``static replica symmetry'' condition
can be calculated via the Belief Propagation message passing scheme [Krzakala et al., PNAS 2007].
Here we prove this conjecture for two general classes of random factor graph models, namely Poisson random factor graphs and random regular factor graphs.
Specifically, we show that the messages constructed just as in the case of acyclic factor graphs asymptotically satisfy the Belief Propagation equations
and that the free energy density is given by the Bethe free energy formula.

\bigskip
\noindent
\emph{Mathematics Subject Classification:} 05C80, 82B44
\end{abstract}

\maketitle

\section{Introduction}\label{Sec_intro}

\subsection{Belief Propagation}
Factor graph models are ubiquitous in statistical physics, computer science and combinatorics~\cite{kschischang2001factor,Rudi}.
Formally, a {\em factor graph} $G=(V(G),F(G),\partial_G,(\psi_a)_{a\in F(G)})$ consists of a finite set $V(G)$ of {\em variable nodes},
a set $F(G)$ of {\em constraint nodes} and a function $\partial_G:F(G)\to\bigcup_{l\geq 0}V(G)^l$
	that assigns each constraint node $a\in F(G)$ a finite sequence $\partial a=\partial_Ga$ of variable nodes, whose length is denoted by $d(a)=d_G(a)$.
Additionally, there is a finite set $\Omega$ of {\em spins} and each constraint node $a\in F$ comes with a {\em weight function}
	$\psi_a:\Omega^{d(a)}\to(0,\infty)$.
The factor graph gives rise to a probability distribution $\mu_G$, the {\em Gibbs measure}, on the set $\Omega^{V(G)}$.
Indeed, letting $\sigma(x_1,\ldots,x_k)=(\sigma(x_1),\ldots,\sigma(x_k))$ for $\sigma\in\Omega^{V(G)}$ and $x_1,\ldots,x_k\in V(G)$, we define
	\begin{align}\label{eqGibbs}
	\mu_G&:\sigma\in\Omega^{V(G)}\mapsto\frac1{Z_G}\prod_{a\in F(G)}\psi_a(\sigma(\partial a)),\qquad\mbox{where}
		\quad Z_G=\sum_{\tau\in\Omega^{V(G)}}\prod_{a\in F(G)}\psi_a(\sigma(\partial a))
	\end{align}
is the {\em partition function}.
Moreover, $G$ induces a bipartite graph on $V(G)\cup F(G)$ in which the constraint node $a$
is adjacent to the variable nodes that appear in the sequence $\partial a$.
By (slight) abuse of notation we just write $\partial a=\partial_Ga$ for the set of such variable nodes. 
Conversely, for $x\in V(G)$ we let $\partial x=\partial_Gx$ be the set of all $a\in F(G)$ such that $x\in\partial a$
and we let $d(x)=d_G(x)=|\partial x|$.
(However, we keep in mind that the order of the neighbors of $a$ matters, unless the weight function $\psi_a$ is permutation invariant.)

The {\em Potts model} on a finite lattice is an example of a factor graph model.
In this case the lattice points correspond to the variable nodes and each edge $\{x,y\}$ of the lattice gives rise to a constraint node $a$.
The spins are $\Omega=\{1,\ldots,q\}$ for some integer $q\geq2$.
Moreover, all constraint nodes have the same weight function, namely $\Omega^2\to(0,\infty)$, $(s,t)\mapsto\exp(\beta\vecone\{s=t\})$, 
where $\beta$ is a real parameter.

Another example is the {\em $k$-SAT model} for some $k\geq2$.
The variable nodes $x_1,\ldots,x_n$ correspond to Boolean variables and the constraint nodes $a_1,\ldots,a_m$ to $k$-clauses.
The set of possible spins is $\Omega=\{\pm1\}$ and each constraint node comes with a $k$-tuple $s_i=(s_{i1},\ldots,s_{ik})\in\{\pm1\}^k$.
The weight function is
	$\psi_{a_i}:\{\pm1\}^k\to(0,\infty)$, $\sigma\mapsto\exp(-\beta\vecone\{\sigma=s_i\})$,
	where $\beta>0$ is a real parameter.
Combinatorially, $\pm1$ represent the Boolean values `true' and `false'
and $a_i$ is a propositional clause on the variables $\partial a_i$ whose $j$th variable is negated iff $s_{ij}=-1$.

A key problem associated with a factor graph model is to 	analytically or algorithmically calculate  the ``free energy'' $\ln Z_G$.
Either way, this is notoriously difficult in general~\cite{MooreMertens}.
But in the (very) special case that $G$, viz.\ the associated bipartite graph, is acyclic it is well known that this problem
can be solved via the {\em Belief Propagation equations} (see eg.~\cite[ch.~14]{MM}).
More precisely, for a variable node $x$ and a constraint node $a$ such that $x\in\partial a$ let
$\mu_{G,x\to a}$ be the marginal of $x$ with respect to the Gibbs measure of the factor graph $G-a$ obtained from $G$ by deleting the constraint node $a$.
(To be explicit, $\mu_{G,x\to a}(\sigma)$ is the probability that $x$ is assigned the spin $\sigma\in\Omega$ in a random configuration
	$\SIGMA\in\Omega^{V(G)}$ drawn from $\mu_{G-a}$.)
Similarly, let $\mu_{G,a\to x}$ be the marginal of $x$ in the factor graph obtained from
$G$ by deleting all constraint nodes $b\in\partial x\setminus a$.
We call $\mu_{G,x\to a}$ the {\em message} from $x$ to $a$ and conversely $\mu_{G,a\to x}$ the message from $a$ to $x$.
If $G$ is acyclic, then for all $x\in V(G),\ a\in\partial x,\ \sigma\in\Omega$,
	\begin{align}\label{eqBP}
	\mu_{G,x\to a}(\sigma)&=\frac{\prod_{b\in\partial x}\mu_{G,b\to x}(\sigma)}{\sum_{\tau\in\Omega}
		\prod_{b\in\partial x}\mu_{G,b\to x}(\tau)},&
	\mu_{G,a\to x}(\sigma)&=\frac{\sum_{\tau\in\Omega^{\partial a}}\vecone\{\tau(x)=\sigma\}\psi_a(\tau)
		\prod_{y\in\partial a\setminus x}\mu_{G,y\to a}(\tau(y))}{
		\sum_{\tau\in\Omega^{\partial a}}\psi_a(\tau)\prod_{y\in\partial a\setminus x}\mu_{G,y\to a}(\tau(y))
		}
	\end{align}
and the messages $\mu_{G,x\to a},\mu_{G,a\to x}$ are the  unique solution to (\ref{eqBP}).
In fact, the messages can be computed via a fixed point iteration and the number of iterations steps required is bounded by the diameter of $G$.
Furthermore, $\ln Z_G$ is equal to the {\em Bethe free energy}, defined in terms of the messages as%
	\begin{align*}
	\cB_G&=
	\sum_{x\in V(G)}\ln\brk{\sum_{\tau\in\Omega}\prod_{b\in\partial x}\mu_{G,b\to x}(\tau)}+
		\sum_{a\in F(G)}\ln\brk{\sum_{\tau\in\Omega^{\partial a}}\psi_a(\tau)\prod_{x\in\partial a}\mu_{G,x\to a}(\tau(x))}
		-\hspace{-1mm}\sum_{\substack{a\in F(G)\\x\in\partial a}}\hspace{-1mm}\ln\brk{\sum_{\sigma\in\Omega}\mu_{G,a\to x}(\sigma)\mu_{G,x\to a}(\sigma)}.
	\end{align*}
(The denominators in (\ref{eqBP}) and the arguments of the logarithms in the Bethe free energy are guaranteed to be positive
	because we assume that the weight functions $\psi_a$ take strictly positive values.)

\subsection{Random factor graphs}\label{Sec_IntroPoisson}
The present paper is about Gibbs distributions arising from random models of factor graphs.
Such models are of substantial interest in combinatorics, computer science and information theory~\cite{ANP,Rudi}.
The following setup encompasses a reasonably wide class of models.
Let $\Omega$ be a finite set of `spins', let $k\geq3$ be an integer, let $\Psi\neq\emptyset$ be a finite set of functions
$\psi:\Omega^k\to(0,\infty)$ and let $\rho=(\rho_\psi)_{\psi\in\Psi}$ be a probability distribution on $\Psi$.
Then for an integer $n>0$ and a real $d>0$ we define the ``Poisson'' random factor graph $\G_n=\G_n(d,\Omega,k,\Psi,\rho)$ as follows.
The set of variable nodes is $V(\G_n)=\{x_1,\ldots,x_n\}$
and the set of constraint nodes is $F(\G_n)=\{a_1,\ldots,a_m\}$, where $m$ is a Poisson random variable with mean $d n/k$.
Furthermore, independently for each $i=1,\ldots,m$ a weight function $\psi_{a_i}\in\Psi$ is chosen from the distribution $\rho$.
Finally, $\partial a_i\in\{x_1,\ldots,x_n\}^k$ is a uniformly random $k$-tuple of variables, chosen independently for each $i$.
For fixed $d,\Omega,k,\Psi,\rho$
the random factor graph $\G_n$ has a property $\cA$ {\em asymptotically almost surely} (`\whp') if $\lim_{n\to\infty}\pr\brk{\G_n\in\cA}=1$.

A well known concrete example is the {\em random $k$-SAT model} for $k\geq2$, where
we let $\Omega=\{\pm1\}$ and $\Psi=\{\psi^{(s)}:s\in\{\pm1\}^k\}$ with
 $\psi^{(s)}:\sigma\in\{\pm1\}^k\mapsto\exp(-\beta\vecone\{\sigma=s\})$ and $\rho$ is the uniform distribution on $\Psi$.
Further prominent examples include the Ising and the Potts models on the \Erdos-\Renyi\ random graph~\cite{DM,demboPotts}.

As in the general case, it is a fundamental challenge is to get a handle on the free energy $\ln Z_{\G_n}$.
To this end, physicists have proposed the ingenious albeit non-rigorous ``cavity method''~\cite{mezard1990spin}.
The simplest version of this approach, the {\em replica symmetric ansatz}, basically treats  the random factor graph as though it were acyclic.
In particular, the replica symmetric ansatz holds that the ``messages'' $\mu_{\G_n,x\to a}$, $\mu_{\G_n,a\to x}$, defined just as in the tree case as the marginals
of the factor graph obtained by removing $a$ resp.\ $\partial x\setminus a$, satisfy the 
Belief Propagation equations (\ref{eqBP}), at least asymptotically as $n\to\infty$.
Moreover, the replica symmetric prediction as to the free energy is nothing but the Bethe free energy $\cB_{\G_n}$.
If so, then Belief Propagation can not just be used as an analytic tool, but potentially also as an efficient ``message passing algorithm''~\cite{pnas}.
Indeed, the Belief Propagation fixed point iteration has been used algorithmically with considerable empirical success~\cite{Kroc}.

Under what assumptions can we vindicate the replica symmetric ansatz?
Let us write $\mu_{G,x}$ for the marginal of a variable node $x$ under $\mu_{G}$.
Moreover, write $\mu_{G,x,y}$ for the joint distribution of two variable nodes $x,y$ and let $\TV\nix$ denote the total variation norm.
Then
	\begin{align}\label{eqRS}
	\lim_{n\to\infty}\frac1{n^2}\sum_{i,j=1}^n\Erw\TV{\mu_{\G_n,x_i,x_j}-\mu_{\G_n,x_i}\tensor\mu_{\G_n,x_j}}=0
	\end{align}
expresses that \whp\ the spins of two randomly chosen variable nodes are asymptotically independent.
An important conjecture holds that (\ref{eqRS}) is sufficient for the success of Belief Propagation and the Bethe formula~\cite{pnas}.

The main result of this paper proves this conjecture.
For a given factor graph $G$ 
we call the family of messages $\mu_{G,\nix\to\nix}=(\mu_{G,x\to a},\mu_{G,a\to x})_{x\in V(G),a\in F(G),x\in\partial a}$ an {\em $\eps$-Belief Propagation fixed point} on $G$ if
	\begin{align*}
	\sum_{\substack{x\in V(G)\\a\in\partial x\\\sigma\in\Omega}}\abs{\mu_{G,x\to a}(\sigma)-
			\frac{\prod_{b\in\partial x\setminus a}\mu_{G,b\to x}(\sigma)}
				{\sum_{\tau\in\Omega}\prod_{b\in\partial x\setminus a}\mu_{G,b\to x}(\tau)}}
				+
				\abs{\mu_{G,a\to x}(\sigma)-
			\frac{\sum_{\tau\in\Omega^{\partial a}}\vecone\{\tau(x)=\sigma\}\psi_a(\tau)\prod_{y\in\partial a\setminus x}\mu_{G,y\to a}(\tau(y))}
				{\sum_{\tau\in\Omega^{\partial a}}\psi_a(\tau)\prod_{y\in\partial a\setminus x}\mu_{G,y\to a}(\tau(y))}}&<\eps n.
	\end{align*}
Thus, the equations (\ref{eqBP}) hold approximately for almost all pairs $x\in V(G)$, $a\in\partial x$.

\begin{theorem}\label{Thm_RSBP}
If (\ref{eqRS}) holds, then there is a sequence $(\eps_n)_n\to0$ such that $\mu_{\G_n,\nix\to\nix}$ is an $\eps_n$-Belief Propagation fixed point \whp
\end{theorem}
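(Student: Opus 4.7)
The strategy is to express both sides of the Belief Propagation equations in terms of marginals on $O(1)$-size variable sets in suitable \emph{cavity} factor graphs obtained from $\G_n$ by deleting a bounded number of constraint nodes, and then to argue that (i) such joint marginals approximately factorise into products of single-variable marginals and (ii) single-variable marginals are essentially unaffected by the deletion of $O(1)$ constraints. The two BP identities then follow algebraically. Concretely, for the $a\to x$ message, write $\G_n-(\partial x\setminus a)=H+a$ with $H=\G_n-\partial x$, in which $x$ is isolated and hence $\mu_{H,x}$ is uniform. A direct calculation using that $x$ appears only in the constraint $a$ gives
\[
\mu_{\G_n,a\to x}(\sigma)=\frac{\sum_{\tau\in\Omega^{\partial a\setminus x}}\psi_a(\tau,\sigma)\,\mu_{H,\partial a\setminus x}(\tau)}{\sum_{\sigma'\in\Omega}\sum_{\tau}\psi_a(\tau,\sigma')\,\mu_{H,\partial a\setminus x}(\tau)},
\]
so the second BP equation reduces to the two claims $\mu_{H,\partial a\setminus x}(\tau)\approx\prod_{y\in\partial a\setminus x}\mu_{H,y}(\tau(y))$ and $\mu_{H,y}\approx\mu_{\G_n-a,y}=\mu_{\G_n,y\to a}$. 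A symmetric identity, obtained by re-adding the constraints $\partial x\setminus a$ to $H$ to recover $\G_n-a$, expresses $\mu_{\G_n,x\to a}$ against the joint marginal in $H$ of $\bigcup_{b\in\partial x\setminus a}(\partial b\setminus x)$ and reduces the first BP equation to the same two ingredients.

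\textbf{The two ingredients.} For (i) I would first upgrade (\ref{eqRS}) from pairwise to bounded-order factorisation: assuming (\ref{eqRS}), for every fixed $\ell\geq 2$,
\[
\frac1{n^\ell}\sum_{i_1,\ldots,i_\ell=1}^n\Erw\TV{\mu_{\G_n,x_{i_1},\ldots,x_{i_\ell}}-\mu_{\G_n,x_{i_1}}\tensor\cdots\tensor\mu_{\G_n,x_{i_\ell}}}=o(1),
\]
which follows from a telescoping TV-triangle argument combined with a pinning trick. Because the Poisson model samples $\partial a$ as an independent uniform $k$-tuple of variables, this average statement transfers directly to constraint neighbourhoods. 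For (ii) I would exploit contiguity between $\G_n$ and the ensemble obtained by adding or removing $O(1)$ constraint nodes, which follows from $|F(\G_n)|\sim\Po(dn/k)$: this pushes (\ref{eqRS}) from $\G_n$ to the cavity graph $H$, so that the upgraded version of (i) applied inside $H$ delivers the factorisation, while a parallel comparison gives $\mu_{H,y}\approx\mu_{\G_n-a,y}$ for all but $o(n)$ triples $(x,a,y)$.

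\textbf{Assembly and main obstacle.} Plugging the factorisation back into the cavity identity yields both BP equations for all but $o(n)$ pairs $(x,a)$, which after summation over $x$ and $a\in\partial x$ is precisely the $\eps_n$-fixed-point condition. The principal technical obstacle is the transfer step in (ii): (\ref{eqRS}) is an in-expectation statement about $\G_n$, yet the constraints deleted to form $H$ depend on $\G_n$ itself, so a naive union bound does not apply. I expect this to be handled by exploiting the specific Poisson structure---resampling $\partial x$, or a planting/pinning argument that fixes the spins at a small random set of variables to decouple $\partial x$ from the bulk---and this bookkeeping is likely where the proof's main effort lies.
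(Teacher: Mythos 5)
Your overall plan is essentially the one the paper uses: pass to the cavity graph $H$ in which all constraints touching $x_n$ are removed (the paper does this by generating $\G'$ afresh as $\G_n$ conditioned on $\partial x_n=\emptyset$ and then adding $\Po(\cdot)$ constraints through $x_n$, which is exactly the ``resampling $\partial x$'' you anticipate), express both messages through marginals of $O(1)$-size variable sets in $H$, and close by factorising joint marginals in $H$ and arguing that single-variable marginals are stable under adding $O(1)$ constraints. The cavity identities you write are the paper's (\ref{eqProof_Thm_RSBP8})--(\ref{eqProof_Thm_RSBP8a}). The Bayes'-rule step transferring $(\delta,2)$-symmetry from $\G_n$ to the cavity graph (using $\pr[\partial x_n=\emptyset]=\Omega(1)$) is exactly (\ref{eqProof_Thm_RSBP4}), and the upgrade from pairwise to $\ell$-wise symmetry is \Cor~\ref{Cor_states}, imported from \cite{Victor} rather than rederived by a ``telescoping TV argument''---it rests on the regularity machinery of \Thm~\ref{Thm_decomp}.

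Where the proposal has a genuine gap is that you misidentify the main technical obstacle. The conditioning issue you flag is disposed of cleanly by the resampling decomposition. The step you dismiss as a routine ``parallel comparison''---establishing that $\mu_{H,y}\approx\mu_{\G_n-a,y}$, i.e.\ that adding $O(1)$ constraints to an $(\eps,2)$-symmetric factor graph perturbs single-variable marginals by only $o(1)$ on average, and also preserves $(o(1),2)$-symmetry of the Gibbs measure---is the principal new lemma of the proof (\Lem~\ref{Lemma_cavityRS}). It does \emph{not} follow from contiguity of the graph laws: contiguity controls the probability of graph events, but it says nothing about how the Gibbs measure reacts to a specific added constraint, and a single bounded-weight constraint can in principle reweight clusters of the Gibbs distribution by constant factors and shift marginals macroscopically. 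The paper's proof of \Lem~\ref{Lemma_cavityRS} goes through the full $\eps$-homogeneous decomposition of $\mu_{G^+}$ from \Thm~\ref{Thm_decomp}, uses the uniform bounds $\delta\leq\psi\leq1/\delta$ to show that the states of $\mu_{G^+}$ remain regular states of $\mu_G$ of non-negligible mass, and then compares state-by-state marginals via \Cor~\ref{Cor_states2} and \Lem~\ref{Lemma_switch}. Without this lemma or an equivalent argument, the reduction of the $a\to x$ equation does not close. (A minor further point: the $x\to a$ equation only needs the factorisation ingredient, not the marginal-stability one---compare the paper's (\ref{eqProof_Thm_RSBP9})--(\ref{eqProof_Thm_RSBP10}), which make no use of \Lem~\ref{Lemma_cavityRS}---so your claim that both equations reduce to ``the same two ingredients'' is a harmless overstatement.)
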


\begin{corollary}\label{Thm_RSBethe}
If (\ref{eqRS}) holds and $\frac1n\cB_{\G_n}$ converges to a real number $B$ in probability, then $\lim_{n\to\infty}\frac1n\Erw[\ln Z_{\G}]=B.$
\end{corollary}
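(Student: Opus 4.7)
The plan is to derive Corollary~\ref{Thm_RSBethe} from Theorem~\ref{Thm_RSBP} in three moves: concentrate $n^{-1}\ln Z_{\G_n}$ about its mean, prove the proximity $n^{-1}|\cB_{\G_n}-\ln Z_{\G_n}|\to 0$ in probability, and combine with the hypothesis $n^{-1}\cB_{\G_n}\to B$. The concentration step rests on an Azuma-Hoeffding estimate along the Doob martingale that exposes the constraints $(\psi_{a_i},\partial a_i)$ of $\G_n$ one at a time, after first conditioning on $m\sim\Po(dn/k)$ and absorbing the Poisson fluctuations of $m$ by a standard tail bound. Because $\Psi$ is finite and each $\psi\in\Psi$ takes values in a fixed interval $[c,C]\subset(0,\infty)$, changing a single constraint changes $\ln Z_{\G_n}$ by at most $\ln(C/c)$, so $n^{-1}|\ln Z_{\G_n}-\Erw[\ln Z_{\G_n}]|\to 0$ in probability; the same estimate yields the uniform integrability needed to convert convergence in probability into convergence in mean.

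The substantive step is the proximity of $\cB_{\G_n}$ to $\ln Z_{\G_n}$. I would proceed via an Aizenman-Sims-Starr cavity computation: couple $\G_n$ and $\G_{n+1}$ so that $\G_{n+1}$ arises from $\G_n$ by adjoining a fresh variable $x_{n+1}$ together with $\Po(d)$ new constraints, each containing $x_{n+1}$ and $k-1$ further neighbours drawn uniformly from $\{x_1,\dots,x_n\}$; by Poisson thinning this coupling is exact up to $o(1)$ in total variation. The increment $\ln Z_{\G_{n+1}}-\ln Z_{\G_n}$ equals the logarithm of an expectation under $\mu_{\G_n}$ over the spins of the $(k-1)\Po(d)$ neighbours of the new constraints, and under~(\ref{eqRS}) this joint expectation factorises into its single-site components with an $o(1)$ averaged error. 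Theorem~\ref{Thm_RSBP} then identifies the single-site factors with the BP cavity messages of $\G_n$, and a bookkeeping step that isolates a variable contribution, a sum of constraint contributions and the edge normalisations recovers the per-variable local Bethe increment. Summing the telescope over $n$ yields $\Erw[\ln Z_{\G_n}]/n=\Erw[\cB_{\G_n}]/n+o(1)$, and combining with the hypothesis and the concentration of both quantities around their means closes the argument.

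The main obstacle is making the increment-identity rigorous under the combined errors from~(\ref{eqRS}) and the $\eps_n$-BP approximation. One must verify that the joint marginal of the random $(k-1)\Po(d)$ neighbours factorises strongly enough on average (pairwise replica symmetry must be promoted to an $O(1)$-tuple statement for random tuples), that those single-site marginals genuinely coincide with the BP cavity marginals produced by Theorem~\ref{Thm_RSBP}, and that the algebraic rearrangement of the cavity product faithfully reproduces the Bethe density rather than some other linear combination of the same ingredients. Poisson exchangeability of $\G_n$ is the key technical lever, since it lets the intermediate graphs in the cavity construction inherit the theorem's hypotheses from $\G_n$ itself, and since it ensures that the identity $\G_{n+1}\setminus\{x_{n+1},\partial x_{n+1}\}\stackrel{d}{=}\G_n+o(1)$ holds in a sufficiently strong sense for the telescoping argument to sum the $n$ small errors into a genuine $o(n)$ total.
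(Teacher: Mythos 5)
Your overall plan---telescoping the free energy increments via Aizenman--Sims--Starr and invoking Theorem~\ref{Thm_RSBP} together with the factorisation of marginals guaranteed by~(\ref{eqRS})---matches the paper's strategy. However, the coupling step has a genuine gap. You claim that $\G_{n+1}$ arises from $\G_n$ by adjoining a fresh variable $x_{n+1}$ together with $\Po(d)$ new constraints containing it, and that ``$\G_{n+1}\setminus\{x_{n+1},\partial x_{n+1}\}\stackrel{d}{=}\G_n+o(1)$ holds in a sufficiently strong sense.'' It is true that the total variation distance between these two laws is $o(1)$, but that is not strong enough here. Deleting $x_{n+1}$ and its incident constraints from $\G_{n+1}$ leaves a graph on $n$ variables whose constraint count is Poisson with mean $\frac{dn}{k}\bc{\frac{n}{n+1}}^{k-1}=\frac{dn}{k}-\frac{d(k-1)}{k}+O(1/n)$, i.e., about $d(k-1)/k$ fewer constraints in expectation than $\G_n$. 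Since $\ln Z$ is extensive and changes by $\Theta(1)$ per constraint (uniformly, because $\Psi$ is finite and positive), a $\Theta(1)$ deficit in the mean constraint count translates into a $\Theta(1)$ deficit in $\Erw[\ln Z]$ at \emph{each} telescoping step, hence a $\Theta(n)$ error overall. Crucially, this deficit is not a nuisance term that vanishes: it is precisely the surplus of constraint contributions that must be subtracted to turn the na\"ive ``add-a-variable'' computation, which would produce the coefficient $k$ in front of $\sum_a S_4(a)$, into the correct coefficient $1$ appearing in $\cB_{\G_n}$.

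The paper avoids this by introducing a third random factor graph $\hat\G$ with a slightly inflated intensity $\hat d=d(n/(n-1))^{k-1}$, chosen so that \emph{both} deleting each constraint of $\hat\G$ independently with probability $1-((n-1)/n)^{k-1}$ (giving $\G'\stackrel{d}{=}\G_n$ exactly) \emph{and} deleting a uniformly random variable node together with its incident constraints (giving $\G''\stackrel{d}{=}\G_{n-1}$ plus an isolated variable, also exactly) are available from the same $\hat\G$. The increment is then the difference $\ln(Z_{\hat\G}/Z_{\G''})-\ln(Z_{\hat\G}/Z_{\G'})$; the first log-ratio produces $S_1(\vec x)+S_2(\vec x)+S_3(\vec x)$ and the second subtracts $\sum_{a\in A}S_4(a)$ for a set $A$ of $\Po(d(k-1)/k)$ constraints, and the bookkeeping recovers $\frac1n\cB_{\hat\G}+o(1)$. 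To repair your argument you would need to introduce exactly such a constraint-removal term into the cavity computation. Separately, the preliminary Azuma--Hoeffding concentration step and the reduction to $n^{-1}|\cB_{\G_n}-\ln Z_{\G_n}|\to0$ in probability are both unnecessary: once one has $\Erw[\ln(Z_{\G_n}/Z_{\G_{n-1}})]\to B$, Ces\`aro summation already yields $\frac1n\Erw[\ln Z_{\G_n}]\to B$ directly.
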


If (\ref{eqBP}) holds exactly, then the Bethe free energy can be rewritten in terms of the marginals of the variable and constraint nodes~\cite{YFW}.
Specifically, write $\mu_{G,a}$ for the joint distribution of the variables $\partial a$ and let
	\begin{align*}
	\cB_G'&=\sum_{x\in V(G)}(d_G(x)-1)\sum_{\sigma\in\Omega}\mu_{G,x}(\sigma)\ln\mu_{G,x}(\sigma)+
		\sum_{a\in F(G)}\sum_{\sigma\in\Omega^{\partial a}}\mu_{G,a}(\sigma)(\ln\psi_a(\sigma)-\ln\mu_{G,a}(\sigma)).
	\end{align*}
Once more the fact that all $\psi\in\Psi$ are strictly positive ensures that $\cB_G'$ is well-defined.

\begin{corollary}\label{Cor_RSBethe}
If (\ref{eqRS}) holds and $\frac1n\cB_{\G_n}'$ converges to a real $B'$ in probability, then $\lim_{n\to\infty}\frac1n\Erw[\ln Z_{\G}]=B'.$
\end{corollary}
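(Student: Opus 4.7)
The plan is to leverage Corollary~\ref{Thm_RSBethe}: if $\frac{1}{n}\cB_{\G_n}$ is shown to converge in probability to the same limit $B'$ as $\frac{1}{n}\cB_{\G_n}'$, the conclusion follows at once. The entire task therefore reduces to proving
\[\frac{1}{n}\bigl|\cB_{\G_n}-\cB_{\G_n}'\bigr|\;\to\;0\qquad\text{in probability.}\]
To bridge the two expressions I would introduce the \emph{pseudo-marginals}
$\hat\mu_x(\sigma)\propto\prod_{b\in\partial x}\mu_{\G_n,b\to x}(\sigma)$ and
$\hat\mu_a(\sigma)\propto\psi_a(\sigma)\prod_{y\in\partial a}\mu_{\G_n,y\to a}(\sigma(y))$
constructed directly from the messages, and would show that \emph{both} $\cB_{\G_n}$ and $\cB_{\G_n}'$ differ, up to $o(n)$, from the functional $\widehat\cB_{\G_n}$ obtained by plugging $(\hat\mu_x,\hat\mu_a)$ into the formula defining $\cB_G'$.

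The first comparison, $\cB_{\G_n}\approx\widehat\cB_{\G_n}$, is a robust version of the classical Yedidia--Freeman--Weiss identity. Whenever the messages satisfy (\ref{eqBP}) exactly, a direct algebraic manipulation of the three-term definition of $\cB_G$, using the BP recursion to rewrite $\sum_\sigma\mu_{a\to x}(\sigma)\mu_{x\to a}(\sigma)$ as a marginal of $\hat\mu_a$ and $\prod_{b\in\partial x}\mu_{b\to x}$ as (the normalising constant of) $\hat\mu_x$, yields $\cB_G=\widehat\cB_G$ on the nose. Theorem~\ref{Thm_RSBP} supplies the needed approximate version: under (\ref{eqRS}) the messages of $\G_n$ form an $\eps_n$-BP fixed point with $\eps_n\to 0$ \whp, so carrying the same manipulation through with error bookkeeping costs at most $O(\eps_n n)=o(n)$, exploiting that $\Psi$ is finite and that $\psi_{\min}:=\min_{\psi\in\Psi}\min_\sigma\psi(\sigma)>0$.

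The second comparison, $\cB_{\G_n}'\approx\widehat\cB_{\G_n}$, requires showing that $\hat\mu_x\to\mu_{\G_n,x}$ and $\hat\mu_a\to\mu_{\G_n,a}$ in total variation for all but $o(n)$ many $x$ and $a$. This is in essence the tree-BP identity $\mu_x(\sigma)\propto\prod_{b\in\partial x}\mu_{b\to x}(\sigma)$, which under (\ref{eqRS}) holds asymptotically by the same mechanism that drives Theorem~\ref{Thm_RSBP}: pairwise asymptotic independence of spins forces the Gibbs measure restricted to the neighbourhood of a typical $x$ to factorise across its incident constraints, so that removing all of $\partial x$ and reinserting the constraints one at a time recovers $\mu_x$ up to an $o(1)$ error. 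With this averaged closeness in hand, continuity of $\nu\mapsto\sum_\sigma\nu(\sigma)\ln\nu(\sigma)$ on the compact simplex, together with the bound $\psi\ge\psi_{\min}$, translates total-variation closeness into closeness of the Bethe summands; the atypical $x$ and $a$ are $o(n)$ in number and each contribute $O(\ln|\Omega|+d_x\ln|\Omega|)$, so a standard truncation at degree $O(\log n)$ absorbs their total contribution into $o(n)$.

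The main obstacle is the second of these comparisons. Although total-variation convergence of pseudo-marginals to true marginals is morally implied by (\ref{eqRS}), the Bethe functional contains $\ln\mu_x(\sigma)$, which may be arbitrarily negative when a marginal is tiny, so closeness in total variation does not by itself imply closeness of the entropic summands. This lack of uniform Lipschitz control has to be absorbed into a separate stability lemma; the natural route is to bound $\mu_x(\sigma)$ and $\mu_a(\sigma)$ from below by a constant depending only on $(\Omega,k,\Psi)$ and on the local degree, using strict positivity of the weight functions, and then to verify that on the resulting uniformly bounded simplex the two Bethe functionals are jointly continuous in $(\hat\mu,\mu)$ in total variation. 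Once this stability ingredient is in place, combining it with Theorem~\ref{Thm_RSBP} yields $|\cB_{\G_n}-\cB_{\G_n}'|=o(n)$ \whp, which together with Corollary~\ref{Thm_RSBethe} closes the argument.
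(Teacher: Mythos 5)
Your proposal is correct and follows essentially the same route as the paper's own proof: the paper first establishes (as Lemmas~\ref{Claim_VarMargs} and~\ref{Claim_ConstrMargs}) that the true marginals $\mu_{\G_n,x}$ and $\mu_{\G_n,a}$ are, on average over $x$ and $a$, close in total variation to exactly the pseudo-marginals $\hat\mu_x,\hat\mu_a$ you describe, and then carries out the Yedidia--Freeman--Weiss manipulation with $o(1)$ error terms supplied by \Thm~\ref{Thm_RSBP} and with Fact~\ref{Fact_sparse} absorbing the atypical high-degree nodes, before invoking \Cor~\ref{Thm_RSBethe}. Your introduction of the intermediate functional $\widehat\cB_{\G_n}$ is a cosmetic reorganization of the same algebra, and your explicit flagging of the non-Lipschitz issue for the entropic terms and the degree-dependent lower bound on marginals is exactly the technical point that the paper handles (somewhat tacitly) via the strict positivity of $\Psi$ and Fact~\ref{Fact_sparse}.
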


\subsection{Random regular models}
In a second important class of random factor graph models all variable nodes have the same degree $d$.
Thus, with $\Omega,k,\Psi,\rho$ as before let $\reg_n=\reg_{n,\mathrm{reg}}(d,\Omega,k,\Psi,\rho)$ be the random factor graph
with variable nodes $x_1,\ldots,x_n$ and constraint nodes $a_1,\ldots,a_m$, $m=\lfloor dn/k\rfloor$,
chosen uniformly from the set of all 
factor graphs $G$ with $d_G(x_i)\leq d$ for all $i$.
As before, the weight functions $\psi_{a_i}\in\Psi$ are chosen independently from  $\rho$.
Clearly, if $k$ divides $dn$, then all variable nodes have degree $d$ exactly.

In order to study $\reg_n$ we introduce a ``percolated'' version of this model.
For $\psi:\Omega^k\to(0,\infty)$ and $J\subset[k]$ let
	$$\textstyle\psi^J:\Omega^J\to(0,\infty),\qquad
		(\sigma_j)_{j\in J}\mapsto\Omega^{|J|-k}\sum_{(\sigma_j)_{j\not\in J}\in\Omega^{k-|J|}}\psi(\sigma).$$
In words, $\psi^J$ is obtained from $\psi$ by averaging over the missing coordinates $j\in\{1,\ldots,k\}\setminus J$; thus, $\psi^{\{1,\ldots,k\}}=\psi$.
Further, given $\eps>0$ let $\reg^\eps_{n}=\reg_{n,\mathrm{reg}}^\eps(d,\Omega,k,\Psi,\rho)$ be the random factor graph with
variable nodes $x_1,\ldots,x_n$ obtained via the following experiment.
\begin{description}
\item[REG1] Choose a random number $m=\Po(dn/k)$.
\item[REG2] Independently for each $i\in\{1,\ldots,m\}$,
	\begin{enumerate}[(a)]
	\item obtain $J_i\subset\{1,\ldots,k\}$ by including each number with probability $1-\eps$ independently and
	\item choose a function $\psi_i\in\Psi$ according to $\rho$ and let $\psi_{a_i}=\psi_i^{J_i}$.
	\end{enumerate}
\item[REG3] If $\sum_{i=1}^m|J_i|>dn$, then start over from {\bf REG1}.
	Otherwise choose $\G_n^\eps$ uniformly at random subject to the condition that no variable node has degree greater than $d$.  
\end{description}

A practical method to sample  $\G_n^\eps$ uniformly at random is via the ``configuration model''~\cite[\Chap~9]{JLR}: we create $d$ `clones' of each variable node and $|J_i|$ clones of each constraint node $a_i$ (keeping the clones ordered), then pick a uniformly random maximum matching between variable node clones and constraint node clones, then collapse the matching to give our random factor graph ($x$ attached to constraint $a_i$ if some clone of $x$ is matched with a clone of $a_i$). Note that $\sum_{i=1}^m|J_i|$ has the distribution $\Bin(X,1-\eps)$ where $X$ has distribution $k$ times a $\Po(dn/k)$. In particular, its mean is $(1-\eps) dn$ and so a Chernoff bound gives 
\begin{align}
\label{eq:chernoff}
\Pr[Y > (1-\eps/2) dn ] \le \exp(-\Omega(\eps^2 n)).
\end{align}
 
\begin{theorem}\label{Thm_regRSBP}
Assume that $\eps >0$ is such that
	\begin{align}\label{eqregRS}
	\lim_{n\to\infty}\frac1{n^2}\sum_{i,j=1}^n\Erw\TV{\mu_{\reg^\eps_n,x_i,x_j}-\mu_{\reg^\eps_n,x_i}\tensor\mu_{\reg^\eps_n,x_j}}=0.
	\end{align}
Then there is $(\delta_n)_n\to0$ such that $\mu_{\G_n^\eps,\nix\to\nix}$ is a $\delta_n$-Belief Propagation fixed point \whp
\end{theorem}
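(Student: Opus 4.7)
The plan is to verify, for a typical incident pair $(x, a)$ in $\G_n^\eps$, the two $\eps$-BP fixed point equations individually, and then sum the per-pair errors over the at most $dn$ incident pairs. Both equations follow from a single ingredient: if the joint marginals on $\partial a$ and on $\bigcup_{b \in \partial x \setminus a}(\partial b \setminus x)$ factorize into their single-variable marginals in the graph $\G_n^\eps - \partial x$, then straightforward algebraic manipulation of the Gibbs measure turns the right-hand sides of (\ref{eqBP}) into the true marginals of $x$ in $\G_n^\eps - a$ and in $\G_n^\eps - (\partial x \setminus a)$. The task is therefore to turn the replica symmetric assumption (\ref{eqregRS}) into such factorization.

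I would proceed in three steps. First, bootstrap pair factorization (\ref{eqregRS}) into factorization of constant-size tuples: for any fixed $r$, the joint marginal of a uniformly random $r$-tuple of variable nodes is $o(1)$-close in total variation to the tensor product of its single-variable marginals, on average over $\G_n^\eps$. This follows from pair factorization by iterating the triangle inequality in total variation distance, applied to a telescoping sum over the $r$ coordinates. Second, argue that factorization transfers to the graph $\G_n^\eps - \partial x$ obtained by removing the at most $d$ constraints incident to a typical $x$; exploiting the configuration-model sampling described after {\bf REG3}, one couples $\G_n^\eps - \partial x$ with a re-sampled percolated graph in which the $d$ clones of $x$ are left unmatched, and uses the Chernoff bound (\ref{eq:chernoff}) to show that this modification preserves the distribution up to a total variation error that remains $o(1)$ after averaging over $x$. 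Third, invoke the local tree property of $\G_n^\eps$, a first-moment bound on short cycles through $x$ in the configuration model, to ensure that the sets $\partial b \setminus x$ for $b \in \partial x$ are pairwise disjoint for a typical $x$ and that no additional overlap contaminates $\partial a \setminus x$. Combining the three ingredients, one obtains both BP equations at $(x, a)$ with error $o(1)$ in expectation, and summing over all pairs yields the $\delta_n$-BP fixed point w.h.p.

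The main obstacle is the second step. Unlike in the Poisson case of Theorem~\ref{Thm_RSBP}, where deleting a bounded number of constraints leaves the law of the graph essentially unchanged by the insertion-stability of the Poisson process, in the regular model every deletion breaks the degree constraint and one must work harder. The percolation in $\G_n^\eps$ is designed precisely for this: a constraint $a_i$ only interacts with variables at positions $j \in J_i$, and by (\ref{eq:chernoff}) a positive fraction of constraint slots are unmatched with probability exponentially close to one; this slack provides the room required to couple $\G_n^\eps - \partial x$ with a re-sampling of $\G_n^\eps$ without disturbing the Gibbs measure beyond a total variation error of $o(1)$ in expectation over the random pair $(x, a)$. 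Once this coupling is established, the remaining work is essentially the algebraic unfolding of (\ref{eqBP}) carried out pointwise for each typical incident pair.
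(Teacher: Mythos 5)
Your high-level outline agrees with the paper's strategy: bootstrap pair factorization via \Cor~\ref{Cor_states}, establish symmetry of the factor graph with the constraints around a fixed variable removed, use local tree-likeness to decouple the product over $\partial x$, and unfold the Gibbs measure algebraically. Steps 1 and 3 are essentially what the paper does. But there is a genuine gap in step 2, which is where all the real work lies.

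The coupling you describe --- ``couple $\G_n^\eps - \partial x$ with a re-sampled percolated graph in which the $d$ clones of $x$ are left unmatched'' --- at best produces a graph distributed as $\reg_n^\eps$ \emph{conditioned} on $\Delta := d_{\reg_n^\eps}(x) = 0$. That is the right target, but identifying it does not yet buy you symmetry: the hypothesis (\ref{eqregRS}) gives $(\delta,2)$-symmetry of $\reg_n^\eps$, and conditioning on a non-trivial event can in principle destroy that. The paper closes this gap with a Bayes' rule argument that you do not mention and that is precisely why the percolation is needed: because $\Pr[\Delta = 0] = \Omega(\eps^d)$ (which would be $0$ for the un-percolated $d$-regular model), one has
$\Pr[\reg'\text{ not }(\eta,dk)\text{-symmetric}] \leq \Pr[\reg_n^\eps\text{ not }(\eta,dk)\text{-symmetric}] / \Pr[\Delta = 0] + o(1) = o(1)$.
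Your ``slack'' intuition is pointing at the right feature of the model, but the exponential Chernoff bound (\ref{eq:chernoff}) controls the feasibility of the configuration-model construction, not the conditioning cost; the lower bound on $\Pr[\Delta = 0]$ is the ingredient that actually transfers symmetry through the conditioning.

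Second, even granting symmetry of $\reg' = \G_n^\eps - \partial x$, the variables in $\partial a$ and $\bigcup_{b\in\partial x\setminus a}(\partial b\setminus x)$ are not a uniformly random tuple of $\reg'$, so $(\eta,dk)$-symmetry does not apply to them directly. When one re-attaches the deleted constraints to reconstruct $\reg_n^\eps$ from $\reg'$, the free slots must be filled from the set $R$ of variables with residual degree, with probability proportional to the degree deficit; this is the distribution $q$ in the paper's $\reg''$ construction. The proof then needs the observation that $q$ is within a factor $d$ of uniform on a set $R$ of size at least $\eps n/2$ (again the percolation providing the linear slack), so that a Markov-type argument transfers the ``on-average over uniform tuples'' factorization of $(\eta,dk)$-symmetry to the $q$-weighted tuples actually encountered. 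Your proposal treats this as ``straightforward algebraic manipulation,'' but without the comparison of $q$ to uniform the factorization statement does not reach the tuple you need. Both of these points --- the Bayes argument and the near-uniformity of the re-attachment law --- are the substance of the regular-graph proof, and they are missing from the proposal.
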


\noindent
Indeed, if (\ref{eqregRS}) holds for all small enough $\eps>0$, then we obtain
	the free energy of $\reg_n=\reg_{n,\mathrm{reg}}(d,\Omega,k,\Psi,\rho)$.

\begin{corollary}\label{Thm_regRSBethe}
Assume that there is some $\eps_0>0$ such that  (\ref{eqregRS}) holds for every $\eps\in(0,\eps_0)$ and that there is $B\in\RR$ such that
	$\lim_{\eps\searrow0}\limsup_{n\to\infty}\Erw\abs{n^{-1}\cB_{\reg_n^\eps}-B}=0.$
Then $\lim_{n\to\infty}\frac1n\Erw[\ln Z_{\reg_n}]=B.$
\end{corollary}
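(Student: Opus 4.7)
The plan is to combine Theorem \ref{Thm_regRSBP} with a coupling argument that relates the log-partition function of $\reg_n$ to that of $\reg_n^\eps$.

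First, the same Bethe--Peierls computation that yields Corollary \ref{Thm_RSBethe} from Theorem \ref{Thm_RSBP} gives its regular analogue: for every $\eps\in(0,\eps_0)$, assuming (\ref{eqregRS}),
\begin{align*}
\lim_{n\to\infty}\left|\tfrac1n\Erw[\ln Z_{\reg_n^\eps}]-\tfrac1n\Erw[\cB_{\reg_n^\eps}]\right|=0.
\end{align*}
Combining this with the hypothesis $\lim_{\eps\searrow 0}\limsup_n\Erw|n^{-1}\cB_{\reg_n^\eps}-B|=0$ gives
\begin{align*}
\alpha_\eps:=\limsup_{n\to\infty}\left|\tfrac1n\Erw[\ln Z_{\reg_n^\eps}]-B\right|\to 0\quad\text{as }\eps\searrow 0.
\end{align*}

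The second and main step is to show that there is a constant $C=C(d,k,\Psi)$ with
\begin{align*}
\limsup_{n\to\infty}\tfrac1n\left|\Erw[\ln Z_{\reg_n}]-\Erw[\ln Z_{\reg_n^\eps}]\right|\leq C\eps.
\end{align*}
Because $\Psi$ is finite, the weights $\psi\in\Psi$ take values in some interval $[c_1,c_2]$ with $0<c_1\leq c_2<\infty$, and this range is preserved under the averaging $\psi\mapsto\psi^J$. Hence adding, removing, or averaging out one coordinate of a single constraint perturbs $\ln Z$ by at most $\ln(c_2/c_1)=O(1)$. I would construct the coupling in two stages via the intermediate model $\reg_n^0$ (the $\eps=0$ instance of $\reg_n^\eps$). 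Stage one couples $\reg_n$ with $\reg_n^0$ inside the configuration model on $dn$ variable clones: sample $m\sim\Po(dn/k)$ conditioned on $km\leq dn$ (so $m\leq m^{reg}:=\lfloor dn/k\rfloor$), sample $m^{reg}$ independent weights $\psi_1,\dots,\psi_{m^{reg}}$ from $\rho$ together with a uniformly random size-$m$ subset $S\subseteq\{1,\dots,m^{reg}\}$, and draw one uniformly random maximum matching on the $km^{reg}$ constraint clones; $\reg_n$ uses all $m^{reg}$ constraints, while $\reg_n^0$ uses only those indexed by $S$ together with the restriction of the matching. Symmetry verifies that both marginal distributions are correct, and standard Poisson concentration yields $\Erw(m^{reg}-m)=O(\sqrt n)$, whence $|\Erw[\ln Z_{\reg_n}]-\Erw[\ln Z_{\reg_n^0}]|=O(\sqrt n)=o(n)$. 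Stage two couples $\reg_n^0$ and $\reg_n^\eps$ on the same $m$ and the same weights by independently marking each of the $km$ constraint clones as ``kept'' with probability $1-\eps$; $\reg_n^\eps$ retains only the kept clones and inherits the restriction of the $\reg_n^0$-matching. This realizes $\reg_n^\eps$ in distribution up to the \textbf{REG3} check, which by (\ref{eq:chernoff}) fails only with probability $\exp(-\Omega(\eps^2 n))$. The expected number of erased clones is at most $\eps km\leq\eps dn$, each contributing $O(1)$ to $|\ln Z_{\reg_n^0}-\ln Z_{\reg_n^\eps}|$, which delivers the claimed $O(\eps n)$ bound.

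Combining the two steps, for every $\eps\in(0,\eps_0)$,
\begin{align*}
\limsup_{n\to\infty}\left|\tfrac1n\Erw[\ln Z_{\reg_n}]-B\right|\leq C\eps+\alpha_\eps,
\end{align*}
and letting $\eps\searrow 0$ yields the conclusion. The main obstacle is making the two-stage coupling robust: conditioning $\reg_n^0$ on $\{km\leq dn\}$ has probability only about $\frac12$, so one must invoke $O(\sqrt n)$ Azuma-type concentration of $\ln Z$ on the configuration model to verify that this conditioning shifts $n^{-1}\Erw[\ln Z]$ by at most $o(1)$. Minor additional bookkeeping is required to reconcile the configuration-model distribution with the uniform distribution on factor graphs satisfying the degree constraint, but standard contiguity arguments handle this at $O(1)$ cost in $\ln Z$.
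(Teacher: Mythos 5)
Your overall strategy — establish a Bethe formula for the percolated model $\reg_n^\eps$, then couple $\reg_n$ to $\reg_n^\eps$ at Lipschitz cost $O(\eps n)$, then send $\eps\searrow 0$ — is the right decomposition and matches the paper's architecture. Step two, the Lipschitz coupling, is essentially identical to the paper's Claim~\ref{Claim_Lipshitz}, and your configuration-model coupling through an intermediate $\reg_n^0$ is a valid realization of it.

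The genuine gap is in step one. You assert that ``the same Bethe--Peierls computation that yields Corollary~\ref{Thm_RSBethe} from Theorem~\ref{Thm_RSBP} gives its regular analogue'' with a $o(1)$ error. This is not a direct transcription, for two reasons. First, the Aizenman--Sims--Starr coupling in the Poisson case exploits Poisson thinning in an essential way: removing a $\Po$-thinned set of constraints from $\G_n$ yields something exactly distributed as $\G_n$, and removing a random variable node together with its incident constraints yields $\G_{n-1}$ exactly. Neither holds for the (near-)regular model, where degree constraints must be maintained. The paper has to build a new common supergraph $\hat\G$ whose constraint count is $d+\Po(d(n+1)/k)$ conditioned on $k\hat m<dn$, and establishing that $\G'$ and $\G''$ derived from $\hat\G$ approximate $\reg_{n+1}^\eps$ and $\reg_n^\eps$ requires the nontrivial degree-sequence bookkeeping of Claims~\ref{claim:hatgconting}--\ref{claim:reggpp}; the paper itself flags this coupling as ``subtle.'' Second, and more consequentially, the statement you claim is stronger than what is actually provable: the correct assertion, from Lemma~\ref{Claim_final} and equation (\ref{eqProofThm_regRSBethe}), carries an $O(\eps)$ error, not $o(1)$. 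This error is intrinsic — it comes from the $O(\eps)$ fraction of constraint nodes with degree less than $k$ (see (\ref{eqFinal2}) and (\ref{eqFinal3})) — so an $n^{-1}\Erw[\ln Z_{\reg_n^\eps}]=n^{-1}\Erw[\cB_{\reg_n^\eps}]+o(1)$ estimate uniformly over fixed $\eps$ is not what you get. The final limit as $\eps\searrow 0$ still works, so the skeleton of your argument survives, but the intermediate claim is overstated and the heart of the proof — the regular Aizenman--Sims--Starr computation with its couplings and the tracking of the $O(\eps)$ degeneracy — is missing rather than a free corollary of the Poisson case.
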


\subsection{Non-reconstruction}
In physics jargon factor graph models that satisfy (\ref{eqRS}) resp.~(\ref{eqregRS}) are called {\em statically replica symmetric}.
An obvious question is how (\ref{eqRS}) and (\ref{eqregRS}) can be established ``in practice''.
One simple sufficient condition is the more geometric notion of non-reconstruction, also known as  {\em dynamic replica symmetry} in physics.
To state it, recall the bipartite graph on the set of variable and constraint nodes that a factor graph induces.
This bipartite graph gives rise to a metric on the set of variable and constraint nodes, namely the length of a shortest path.
Now, for a factor graph $G$, a variable node $x$, an integer $\ell\geq1$ and a configuration $\sigma\in\Omega^{V(G)}$
we let $\nabla_\ell(G,x,\sigma)$ be the set of all $\tau\in\Omega^{V(G)}$ such that $\tau(y)=\sigma(y)$
for all $y\in V(G)$ whose distance from $x$ exceeds $\ell$.
The random factor graph 
$\G_n=\G_{n}(d,\Omega,k,\Psi,\rho)$ or $\G_n=\reg_{n,\mathrm{reg}}^\eps(d,\Omega,k,\Psi,\rho)$ has the {\em non-reconstruction property} if
	\begin{align}\label{eqNonReconstruction}
	\lim_{\ell\to\infty}\limsup_{n\to\infty}
		\frac{1}{n}\sum_{i=1}^n\sum_{\sigma\in\Omega^n}\Erw\brk{\mu_{\G_n}(\sigma)
			\TV{\mu_{\G_n,x_i}-\mu_{\G_n,x_i}[\nix|\nabla_\ell(\G_n,x_i,\sigma)]}}&=0.
	\end{align}
where the expectation is over the choice of $\G_n$. 
In words, for large enough $\ell$ and $n$ the random factor graph $\G_n$ has the following property \whp\
If we pick a variable node $x_i$ uniformly at random and if we pick $\SIGMA$ randomly from the Gibbs distribution,
then the expected difference between the ``pure'' marginal $\mu_{\G_n,x_i}$ of $x_i$ and the marginal of $x_i$
in the conditional distribution given that the event $\nabla_\ell(\G_n,x_i,\SIGMA)$ occurs diminishes.
We contrast~(\ref{eqNonReconstruction}) to the much stronger {\em uniqueness} property which states that the influence of the worst-case boundary condition on the marginal spin distribution of $x_i$ decreases in the limit of large $\ell$ and $n$.

\begin{lemma}\label{prop:nonreconstruction}
Let $\G_n$ be distributed according to $\G_{n}(d,\Omega,k,\Psi,\rho)$ or $\reg_{n,\mathrm{reg}}^\eps(d,\Omega,k,\Psi,\rho)$.
If (\ref{eqNonReconstruction}) holds, then 
\begin{equation}
\label{eq:rsnonrecon}
\lim_{n\to\infty} \frac{1}{n^2} \sum_{i,j=1}^n \Erw\TV{\mu_{\G_n,x_i,x_j}-\mu_{\G_n,x_i}\tensor\mu_{\G_n,x_j}}=0. 
\end{equation}
\end{lemma}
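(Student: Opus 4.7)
The plan is to combine non-reconstruction with the fact that both random models are locally sparse: in a factor graph with bounded average degree, most pairs $(x_i,x_j)$ lie at distance greater than any fixed $\ell$. For such far-apart pairs, the spin $\SIGMA(x_j)$ is determined by the configuration outside $B_\ell(x_i)$, so conditioning on $\SIGMA(x_j)$ is coarser than conditioning on the full $\ell$-boundary. I will use this to dominate the joint-versus-product TV distance by the non-reconstruction quantity; pairs at distance at most $\ell$ contribute a vanishing fraction by a routine neighborhood-size bound.

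More precisely, fix a factor graph $G$ and two variables $x_i,x_j$ with $\mathrm{dist}(x_i,x_j)>\ell$. Write $U=\{y\in V(G):\mathrm{dist}(x_i,y)>\ell\}$, so $x_j\in U$. For any $\tau\in\Omega$ with $\mu_{G,x_j}(\tau)>0$ (which holds because the weights are strictly positive) the tower property for $\SIGMA\sim\mu_G$ gives
\begin{align*}
\mu_{G,x_i|x_j}(\sigma\mid\tau)=\Erw\brk{\mu_{G,x_i}[\sigma\mid\nabla_\ell(G,x_i,\SIGMA)]\mid\SIGMA(x_j)=\tau}.
\end{align*}
Applying $|\Erw[\cdot]|\leq\Erw|\cdot|$, summing over $\sigma$, and then averaging over $\tau\sim\mu_{G,x_j}$, the chain-rule identity $\TV{\mu_{G,x_i,x_j}-\mu_{G,x_i}\tensor\mu_{G,x_j}}=\sum_\tau\mu_{G,x_j}(\tau)\TV{\mu_{G,x_i|x_j}(\nix\mid\tau)-\mu_{G,x_i}}$ yields the key bound
\begin{align*}
\TV{\mu_{G,x_i,x_j}-\mu_{G,x_i}\tensor\mu_{G,x_j}}\leq\sum_{\sigma\in\Omega^{V(G)}}\mu_G(\sigma)\TV{\mu_{G,x_i}-\mu_{G,x_i}[\nix\mid\nabla_\ell(G,x_i,\sigma)]},
\end{align*}
valid whenever $\mathrm{dist}(x_i,x_j)>\ell$ in $G$.

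To finish, given $\delta>0$ I will pick $\ell$ so large that by (\ref{eqNonReconstruction}) the average of the right-hand side of the displayed inequality over $i$ and over $\G_n$ is below $\delta/2$ for all large $n$. Splitting the double sum $\frac1{n^2}\sum_{i,j}\Erw\TV{\mu_{\G_n,x_i,x_j}-\mu_{\G_n,x_i}\tensor\mu_{\G_n,x_j}}$ according to whether $\mathrm{dist}(x_i,x_j)>\ell$ or not, the former contribution is at most $\delta/2$ by the key bound, while the latter is at most $\frac1n\Erw|B_\ell(x_1)|$, where $B_\ell(x_1)$ is the $\ell$-neighborhood of $x_1$ (since $\TV\nix\leq 1$). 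For both $\G_{n}(d,\Omega,k,\Psi,\rho)$ and $\reg^\eps_{n,\mathrm{reg}}(d,\Omega,k,\Psi,\rho)$ a standard BFS/branching-process argument (or the deterministic estimate $|B_\ell(x_1)|\leq(d(k-1))^\ell$ in the regular case, since vertex degrees are at most $d$) shows $\Erw|B_\ell(x_1)|\leq\Phi(d,k,\ell,\eps)<\infty$ uniformly in $n$. Hence the second contribution tends to $0$ as $n\to\infty$ with $\ell$ fixed, and sending $n\to\infty$ and then $\ell\to\infty$ yields (\ref{eq:rsnonrecon}). The only mildly technical step is the neighborhood bound for the percolated regular model, but since $\eps$-percolation only deletes clones, it cannot increase $|B_\ell|$ beyond the unpercolated bound, so no obstacle arises.
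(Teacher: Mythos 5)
Your proof is correct and rests on the same two ingredients as the paper's: for pairs at distance greater than $\ell$, conditioning on $\SIGMA(x_j)$ is a coarsening of conditioning on the configuration outside the $\ell$-ball around $x_i$, and sparsity makes nearby pairs negligible. The paper packages this as a proof by contradiction, whereas you derive an explicit domination inequality via the tower property; the substance is identical and your direct version is, if anything, cleaner.
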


\noindent
Non-reconstruction is a sufficient but not a necessary condition for~(\ref{eqRS}) and~(\ref{eqregRS}).
For instance, in the random graph coloring problem (\ref{eqRS}) is satisfied in a {\em much} wider regime of parameters than (\ref{eqNonReconstruction})
	\cite{Nor,pnas,montanari2011reconstruction}.

\subsection{Discussion and related work}
The main results of the present paper match the predictions from~\cite{pnas} and thus provide a fairly comprehensive vindication of Belief Propagation.
To the extent that Belief Propagation and the Bethe free energy are not expected to be correct if the conditions (\ref{eqRS}) resp.\ (\ref{eqregRS}) 
are violated~\cite{pnas,MM}, 
the present results seem to be best possible.

In combination with \Lem~\ref{prop:nonreconstruction} the main results facilitate the ``practical'' use of Belief Propagation to analyze the free energy.
For instance, \Thm~\ref{Thm_regRSBP} and \Cor~\ref{Thm_regRSBethe} allow for a substantially simpler derivation
of  the condensation phase transition in the regular $k$-SAT model than in the original paper~\cite{Victor2}.
Although non-trivial it is practically feasible to study Belief Propagation fixed points on random factor graphs; e.g., \cite{Victor2,Cond}.

Additionally, as \Thm s~\ref{Thm_RSBP} and~\ref{Thm_regRSBP}  show that the ``correct'' messages 
are an asymptotic Belief Propagation fixed point,  these results probably go as far as one can hope for in terms
of a generic explanation of the algorithmic success of Belief Propagation.
The missing piece in order to actually prove that the Belief Propagation fixed point iteration converges rapidly is basically an analysis of the ``basin of attraction''.
However, this will likely have to depend on the specific model.

We always assume that the weight functions $\psi_a$ associated with the constraint nodes are strictly positive.
But this is partly out of convenience (to ensure that all the quantities that we work with are well-defined, no questions asked).
For instance, it is straightforward to extend the present arguments extend to the hard-core model on independent sets (details omitted).

In an important paper, Dembo and Montanari~\cite{DM} made progress towards putting the physics predictions on factor graphs, random or not, on a rigorous basis.
They proved, inter alia, that a certain ``long-range correlation decay'' property
reminiscent of non-reconstruction
 is sufficient for the Belief Propagation equations to hold on a certain class of factor graphs whose local neighborhoods converge to trees~\cite[\Thm~3.14]{DM}.
Following this, under the assumption of Gibbs uniqueness along an interpolating path in parameter space, Dembo, Montanari, and Sun~\cite{dembo} verified the Bethe free energy formula for locally tree-like factor graphs with a single weight function and constraint nodes of degree $2$.
Based on these ideas Dembo, Montanari, Sly and Sun~\cite{demboPotts} verified the Bethe free energy prediction for the ferromagnetic Potts model on regular tree-like graphs at any temperature.

The present paper builds upon the ``regularity lemma'' for measures on discrete cubes from~\cite{Victor}.
In combinatorics, the ``regularity method'', which developed out of \Szemeredi's regularity lemma for graphs~\cite{Szemeredi}, has become an indispensable tool.
Bapst and Coja-Oghlan~\cite{Victor} adapted \Szemeredi's proof to measures on a discrete cube, such as the Gibbs measure of a (random) factor graph,
and showed that this result can be combined with the ``second moment method'' to calculate the free energy under certain assumptions.
While these assumptions are (far) more restrictive than our conditions (\ref{eqRS}) and~(\ref{eqregRS}), \cite{Victor} deals with more general factor graph models.

Furthermore, inspired by the theory of graph limits~\cite{Lovasz},
Coja-Oghlan, Perkins and Skubch~\cite{COPS} put forward a ``limiting theory'' for discrete probability measures to go with the regularity concept from~\cite{Victor}.
They applied this concept to the Poisson factor graph model from \Sec~\ref{Sec_IntroPoisson}
under the assumption that (\ref{eqRS}) holds {\em and} that the Gibbs measure converges in probability to a limiting measure (in the topology constructed in~\cite{COPS}).
While these assumptions are stronger and more complicated to state than (\ref{eqRS}),
\cite{COPS} shows that the limiting Gibbs measure induces a ``geometric'' Gibbs measure on a certain infinite random tree.
Moreover, this geometric measure satisfies a certain fixed point relation reminiscent of the Belief Propagation equations.

Additionally, the present paper builds upon ideas from Panchenko's work~\cite{Panchenko,Panchenko2,PanchenkoBook}.
In particular, we follow~\cite{Panchenko,Panchenko2,PanchenkoBook} in using the Aizenman-Sims-Starr scheme~\cite{Aizenman} to calculate the free energy.
Moreover, although Panchenko only deals with Poisson factor graphs,
the idea of percolating the regular factor graph is inspired by his ``cavity coordinates''
as well as the interpolation argument of Bayati, Gamarnik and Tetali~\cite{bayati}.  Other applications of the cavity method to computing the free energy of Gibbs distributions on lattices include \cite{gamarnikKatz}.

The paper~\cite{Panchenko2} provides a promising approach towards a general formula for the free energy in Poisson random factor graph models.
Specifically, \cite{Panchenko2} yields a variational formula for the free energy under the assumption that the
Gibbs measures satisfies a ``finite replica symmetry breaking'' condition, which is more general than (\ref{eqRS}).
Another assumption of~\cite{Panchenko2} is that the weight functions of the factor graph model must satisfy certain ``convexity conditions'' to facilitate the use of the interpolation method, which is needed to upper-bound the free energy.
However, it is conceivable that the interpolation argument is not necessary if (\ref{eqRS}) holds and that \Cor~\ref{Cor_RSBethe}
could be derived along the lines of~\cite{Panchenko2} (although this is not mention in the paper).
In any case, the main point of the present paper is to justify the Belief Propagation equations, which are at very core of the physicists ``cavity method'' in factor graph models,
and to obtain a formula for the free energy in terms of ``messages''.

Finally, the proof of \Lem~\ref{prop:nonreconstruction} is a fairly straightforward extension of the proof of~\cite[\Prop~3.4]{COPS}.
That proof, in turn, is a generalization of an argument from~\cite{Mossel}. For more on non-reconstruction thresholds in random factor graph models see \cite{Nayantara,Charis,GM,montanari2011reconstruction}.

\subsection{Outline}
After introducing some notation and summarizing the results from~\cite{Victor} that we build upon in \Sec~\ref{Sec_prelims},
we prove \Thm~\ref{Thm_RSBP} and Corollaries~\ref{Thm_RSBethe} and~\ref{Cor_RSBethe} in \Sec~\ref{Sec_Poisson}.
\Sec~\ref{Sec_regular} then deals with \Thm~\ref{Thm_regRSBP} and \Cor~\ref{Thm_regRSBethe}.
Finally, the short proof of \Lem~\ref{prop:nonreconstruction} can be found in \Sec~\ref{Sec_nonre}.

\section{Preliminaries}\label{Sec_prelims}

\noindent
For an integer $l\geq1$ we let $[l]=\{1,\ldots,l\}$.
When using $O(\nix)$-notation we refer to the asymptotics as $n\to\infty$ by default.
We say two sequences of probability distributions $Q_n$ and $P_n$ are {\em mutually contiguous} if for every sequence of events $E_n$, $P_n(E_n) =o(1)$ if and only if $Q_n(E_n) = o(1)$. Throughout the paper we denote by $d,\Omega,k,\Psi,\rho$ the parameters of the factor graph models from \Sec~\ref{Sec_intro}.
We always assume $d,\Omega,k,\Psi,\rho$ remain fixed as $n\to\infty$.

For a finite set $\cX$ we let $\cP(\cX)$ be the set of all probability measures on $\cX$,
which we identify with the set of all maps $p:\cX\to[0,1]$ such that $\sum_{\omega\in\cX}p(\omega)=1$.
If $\mu\in\cP(\cX^S)$ for some finite set $S\neq\emptyset$, then we write
	$\TAU^\mu,\SIGMA^\mu,\SIGMA_1^\mu,\SIGMA_2^\mu,\ldots$ for independent samples chosen from $\mu$.
We omit the superscript where possible.
Furthermore, if $X:(\cX^S)^l\to\RR$ is a random variable, then we write
	\begin{align*}
	\bck{X}_\mu=\bck{X(\SIGMA^\mu_1,\ldots,\SIGMA^\mu_l)}_\mu=\sum_{\sigma_1,\ldots,\sigma_l\in\cX^S}
		X(\sigma_1,\ldots,\sigma_l)\prod_{i=1}^l\mu(\sigma_i)
	\end{align*}
for the expectation of $X$ with respect to $\mu^{\tensor l}$.
We reserve the symbols $\Erw[\nix]$, $\pr[\nix]$ for other sources of randomness such as the choice of a random factor graph.
Moreover, for a set $\emptyset\neq U\subset\cX$, $\omega\in\cX$ and $\sigma\in\cX^S$ we let
	\begin{align*}
	\sigma[\omega|U]&=\frac1{|U|}\sum_{u\in U}\vecone\{\sigma(u)=\omega\}.
	\end{align*}
Thus, $\sigma[\nix|U]=(\sigma[\omega|U])_{\omega\in\cX}\in\cP(\cX)$ is the distribution of the spin $\sigma(\vec u)$ for a uniformly random $\vec u\in U$.
Further, for a measure $\mu\in\cP(\cX^S)$ and a sequence $x_1,\ldots,x_l\in S$ of coordinates we let
	$\mu_{x_1,\ldots,x_l}\in\cP(\cX^l)$ be the distribution of the $l$-tuple $(\SIGMA^\mu(x_1),\ldots,\SIGMA^\mu(x_l))$.
In symbols, 
	$\mu_{x_1,\ldots,x_l}(\omega_1,\ldots,\omega_l)=\bck{\vecone\{\forall i\in[l]:\SIGMA^\mu(x_i)=\omega_i\}}_\mu.$

We use the ``regularity lemma'' for discrete probability measures from~\cite{Victor}.
Let us fix a finite set $\cX$ for the rest of this section.
If $\vec V=(V_1,\ldots,V_l)$ is a partition of some set $V$, then we call $\#\vec V=l$ the \emph{size} of $\vec V$.
Moreover, for $\eps>0$ we say that $\mu\in\cP(\cX^n)$ is \emph{$\eps$-regular} on a set $U\subset[n]$ if
for every subset $S\subset U$ of size $|S|\geq\eps|U|$ we have
$$\bck{\TV{\SIGMA[\nix|S]-\SIGMA[\nix|U]}}_{\mu}<\eps.$$
Further, $\mu$ is \emph{$\eps$-regular}  with respect to a partition $\vec V$ if
	there is a set $J\subset[\#\vV]$ such that $\sum_{i\in J}|V_i|\geq(1-\eps)n$ and such that $\mu$ is $\eps$-regular on $V_i$ for all $i\in J$.
Additionally, if $\vec V$ is a partition of $[n]$ and $\vec S=(S_1,\ldots,S_{\#\vS})$ is a partition of $\cX^n$, then
we say that $\mu$ is \emph{$\eps$-homogeneous} w.r.t.\ $(\vec V,\vec S)$ if there is a subset $I\subset[\#\vec S]$ such that the following is true:
\begin{description}
\item[HM1] We have $\mu(S_i)>0$ for all $i\in I$ and $\sum_{i\in I}\mu(S_i)\geq1-\eps$.
\item[HM2] For all $i\in[\#\vec S]$ and $j\in[\#\vec V]$ we have
	$\max_{\sigma,\sigma'\in S_i}\TV{\sigma[\nix|V_j]-\sigma'[\nix|V_j]}<\eps.$
\item[HM3]  For all $i\in I$ the measure $\mu[\nix|S_i]$ is $\eps$-regular with respect to $\vec V$.
\item[HM4] $\mu$ is $\eps$-regular with respect to $\vV$.
\end{description}

\begin{theorem}[{\cite[\Thm~2.1]{Victor}}]\label{Thm_decomp}
For any $\eps>0$ there is an $N=N(\eps,\cX)>0$ such that for every $n>N$, every $\mu\in\cP(\cX^n)$
admits partitions $\vV$ of $[n]$ and $\vS$ of $\cX^n$ with
$\#\vec V+\#\vec S\leq N$ such that
$\mu$ is $\eps$-homogeneous with respect to $(\vec V,\vec S)$.
\end{theorem}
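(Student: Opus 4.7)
The plan is to adapt \Szemeredi's regularity lemma to the measure-on-a-cube setting. As in the graph case, I would define a bounded, refinement-monotone ``energy'' functional on pairs of partitions $(\vV,\vS)$ whose failure to stabilise forces a refinement with a quantitative energy increment. Concretely, for a partition $\vV=(V_1,\ldots,V_{\#\vV})$ of $[n]$ and a partition $\vS=(S_1,\ldots,S_{\#\vS})$ of $\cX^n$ with $\mu(S_i)>0$ on the parts one tracks, set
\[
e(\vV,\vS)=\sum_{i}\mu(S_i)\sum_{j}\frac{|V_j|}{n}\sum_{\omega\in\cX}\bck{\SIGMA[\omega|V_j]}_{\mu[\nix|S_i]}^{2}.
\]
By convexity of $x\mapsto x^{2}$, refining either $\vV$ or $\vS$ can only increase $e$, and clearly $0\le e(\vV,\vS)\le 1$.

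First I would build $\vV$ alone. Start with the trivial partition and iterate: as long as $\mu$ is \emph{not} $\eps$-regular with respect to the current $\vV$, there is a block $V_j$ and a ``witness'' set $S\subset V_j$ of density $\ge\eps$ on which $\bck{\TV{\SIGMA[\nix|S]-\SIGMA[\nix|V_j]}}_\mu\ge\eps$. Splitting $V_j$ along the boolean ``$\vec u\in S$'' partitions $V_j$ into two pieces whose empirical $\cX$-profiles differ by at least $\eps$ on an $\eps$-fraction of $V_j$; a routine $L^{2}$ computation (the analogue of the Szemer\'edi defect-to-energy lemma) then yields an energy increment of at least $\eps^{c}$ for some absolute constant $c>0$. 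Since the total energy is bounded by $1$, after at most $\eps^{-c}$ such splits the process halts and returns a $\vV$ of size at most $T(\eps)=2^{O(\eps^{-c})}$ for which $\mu$ is $\eps$-regular w.r.t.\ $\vV$, establishing \textbf{HM4}.

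Next I would build $\vS$ by \emph{discretising the profile} of each configuration on $\vV$. Pick an $\eps/2$-net $\cN\subset\cP(\cX)^{\#\vV}$, and for each $\nu\in\cN$ let $S_\nu=\{\sigma\in\cX^n:(\sigma[\nix|V_j])_{j\le\#\vV}\in\mathrm{ball}(\nu,\eps/2)\}$; discarding the parts of total $\mu$-mass at most $\eps$ produces the parts we retain for $\vS$. This construction gives \textbf{HM1} (by discarding light parts) and \textbf{HM2} (by the net spacing), and blows up the size by only the $\eps$-covering number of $\cP(\cX)^{\#\vV}$, which is bounded in terms of $\eps$ and $|\cX|$.

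The main obstacle is \textbf{HM3}: for each retained $S_i$, the conditional measure $\mu[\nix|S_i]$ must itself be $\eps$-regular w.r.t.\ $\vV$. This need not follow from \textbf{HM4}, so I would close the loop by an outer iteration. If some $\mu[\nix|S_i]$ fails $\eps$-regularity on $V_j$, use its witness set to further split $V_j$ in $\vV$, then rebuild $\vS$ by redoing the profile discretisation step. The same $L^{2}$ calculation as before shows that each outer step increases $e(\vV,\vS)$ by at least $\mu(S_i)\cdot\eps^{c}\ge\eps^{c+1}$ because only the mass in $S_i$ is affected but its contribution is weighted by $\mu(S_i)\ge\eps/\#\vS$ (which is bounded below in terms of the current partition size). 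Bounding $e\le1$ again caps the number of outer iterations, and one collects the final bound $\#\vV+\#\vS\le N(\eps,\cX)$ by composing the tower-type bounds from the successive $\eps$-nets and refinements. The subtlety to handle carefully is making the outer increment $\eps^{c+1}$ strictly bounded below \emph{independently} of the current partition sizes; this is done by first applying a purely energy-based clean-up step that removes parts $S_i$ of tiny mass into the ``discarded'' bucket before testing regularity, so that in any iteration where a refinement is triggered the mass $\mu(S_i)$ is bounded below by a function of $\eps$ only.
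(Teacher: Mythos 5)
This theorem is cited from~\cite[Thm.~2.1]{Victor}; the paper under review does not reprove it, so there is no internal proof to compare against. Your proposal does follow the broad Szemer\'edi-style template that~\cite{Victor} advertises (refinement-monotone energy, defect version of Cauchy--Schwarz, outer iteration for conditional regularity), so the overall route is appropriate. However, the specific energy you write down does not work for your first step, and this is a genuine gap rather than a detail to be routinely filled.

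Your energy is
\[
e(\vV,\vS)=\sum_i\mu(S_i)\sum_j\frac{|V_j|}{n}\sum_{\omega\in\cX}\bck{\SIGMA[\omega|V_j]}_{\mu[\nix|S_i]}^2 ,
\]
and you propose to build $\vV$ alone, i.e.\ with $\vS$ the trivial one-part partition. In that regime $e$ tracks only the \emph{squared mean} $\bck{\SIGMA[\omega|V_j]}_\mu^2$ of the empirical profiles, whereas the $\eps$-regularity notion in the paper is a statement about \emph{typical} $\sigma\sim\mu$ (it bounds $\bck{\TV{\SIGMA[\nix|S]-\SIGMA[\nix|V_j]}}_\mu$, an expectation of an absolute deviation). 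Irregularity can therefore be entirely invisible to $e(\vV,\{\cX^n\})$. Concretely, take $\cX=\{0,1\}$, $V_j=\{1,2\}$, and $\mu$ uniform on $\{(0,1),(1,0)\}$. Then $\SIGMA[\nix|V_j]=(1/2,1/2)$ with certainty, yet for $S=\{1\}$ one has $\bck{\TV{\SIGMA[\nix|S]-\SIGMA[\nix|V_j]}}_\mu=1/2$, so $\mu$ is far from $\eps$-regular on $V_j$; nevertheless the split $\{1\},\{2\}$ leaves $e$ unchanged, since all the means are $1/2$ before and after. So your claimed ``energy increment of at least $\eps^c$'' from an irregularity witness simply fails when $\vS$ is trivial, and the first iteration (establishing \textbf{HM4}) cannot terminate for the right reason.

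The fix is to make the energy sensitive to fluctuations of $\SIGMA[\nix|V_j]$ across $\sigma$, not merely to its mean. Two standard repairs: (i) replace the index used in the $\vV$-building step by the second-moment quantity $\sum_j\frac{|V_j|}{n}\sum_\omega\bck{\SIGMA[\omega|V_j]^2}_\mu$ (this is bounded by $1$, increases under refinement of $\vV$ by Jensen, and in the example above jumps from $1/2$ to $1$ after the split, as it should); or (ii) co-refine $\vV$ and $\vS$ from the start, so that conditioning on a fine $\vS$ (which via \textbf{HM2} pins down $\SIGMA[\nix|V_j]$ within each $S_i$) makes your joint $e(\vV,\vS)$ behave like the second-moment energy. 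In particular, without one of these repairs, the phrase ``a routine $L^2$ computation yields an energy increment'' is doing work that the stated energy cannot support. Once the energy is corrected, the remaining architecture of your proposal --- the $\eps$-net discretisation for $\vS$, the outer loop for \textbf{HM3}, and the lower bound on $\mu(S_i)$ at the moment of refinement --- is a plausible reconstruction of the argument in~\cite{Victor}, though you should be explicit that the final bound is of tower type in $1/\eps$ and $|\cX|$.
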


\noindent
A \emph{$(\eps,l)$-state} of $\mu$ is a set $S\subset\cX^n$ such that $\mu(S)>0$ and
	\begin{align*}
	\sum_{x_1,\ldots,x_l\in[n]}\TV{\mu_{x_1,\ldots,x_l}[\nix|S]-\mu_{x_1}[\nix|S]\tensor\cdots\tensor \mu_{x_l}[\nix|S]}<\eps n^l.
	\end{align*}
We call $\mu$ \emph{$(\eps,l)$-symmetric} if the entire cube $\cX^n$ is an $(\eps,l)$-state.

\begin{corollary}[{\cite[\Cor~2.3 and 2.4]{Victor}}]\label{Cor_states}
For any $\eps>0,l\geq3$ there exists $\delta>0$ such that for all $n>1/\delta$ and all $\mu\in\cP(\cX^n)$ the following is true:

If $\mu$ is $(\delta,2)$-symmetric, then $\mu$ is $(\eps,l)$-symmetric.
\end{corollary}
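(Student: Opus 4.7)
The plan is to apply the regularity lemma (\Thm~\ref{Thm_decomp}) to expose the ``mixture-of-products'' structure of $\mu$, then exploit $(\delta,2)$-symmetry to show that this mixture collapses to essentially a single product measure, and finally to deduce $(\eps,l)$-symmetry. Given $\eps>0$ and $l\geq3$, I choose $\eps_1=\eps_1(\eps,l)\ll\eps/l$ and invoke \Thm~\ref{Thm_decomp} to obtain partitions $\vV=(V_j)_{j=1}^s$ of $[n]$ and $\vS=(S_i)_{i=1}^t$ of $\cX^n$ with $s+t\leq N(\eps_1)$ such that $\mu$ is $\eps_1$-homogeneous w.r.t.\ $(\vV,\vS)$. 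Write $p_i=\mu(S_i)$ and let $I\subset[t]$ be the heavy states of HM1. Property HM2 supplies, for each $i\in I$ and $j\in[s]$, a distribution $\nu_{ij}\in\cP(\cX)$ with $\TV{\sigma[\nix|V_j]-\nu_{ij}}<\eps_1$ for every $\sigma\in S_i$. A routine consequence of the $\eps_1$-regularity in HM3 is that for all but a $o_{\eps_1}(1)$-fraction of tuples $(x_1,\ldots,x_l)$ and each $i\in I$,
\[
\TV{\mu_{x_1,\ldots,x_l}[\nix|S_i]-\nu_{ij(x_1)}\tensor\cdots\tensor\nu_{ij(x_l)}}<\eps_2,
\]
where $j(x)$ denotes the class of $x$ and $\eps_2=\eps_2(\eps_1,l)\to0$ as $\eps_1\to0$. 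Hence, conditional on any heavy state, $l$-tuples of coordinates look asymptotically independent.

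Next I use $(\delta,2)$-symmetry to kill the inter-state correlation. Since $\mu=\sum_i p_i\mu[\nix|S_i]$, for typical pairs $(x_1,x_2)$ we have $\mu_{x_1,x_2}\approx\sum_{i\in I}p_i\,\nu_{ij(x_1)}\tensor\nu_{ij(x_2)}$ whereas $\mu_{x_1}\tensor\mu_{x_2}\approx\bigl(\sum_i p_i\nu_{ij(x_1)}\bigr)\tensor\bigl(\sum_i p_i\nu_{ij(x_2)}\bigr)$; their difference is precisely the covariance of $i\mapsto\nu_{ij(\nix)}$ under the law $(p_i)$. The hypothesis $\sum_{x_1,x_2}\TV{\mu_{x_1,x_2}-\mu_{x_1}\tensor\mu_{x_2}}<\delta n^2$ therefore forces the averaged covariance to be at most $\delta+O(\eps_1)$. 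With $\delta$ chosen sufficiently small relative to $N(\eps_1)$, a Chebyshev/pigeonhole argument over the at most $N(\eps_1)$ state indices yields, for a $(1-o(1))$-fraction of classes $V_j$, a single representative $\nu_j\in\cP(\cX)$ with $\sum_{i\in I}p_i\TV{\nu_{ij}-\nu_j}<\eps_3$, and $\eps_3\to0$ as $\delta\to0$.

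Combining the two estimates, for typical tuples $(x_1,\ldots,x_l)$ the identity $\mu_{x_1,\ldots,x_l}=\sum_i p_i\mu_{x_1,\ldots,x_l}[\nix|S_i]$ together with the first display gives $\mu_{x_1,\ldots,x_l}\approx\nu_{j(x_1)}\tensor\cdots\tensor\nu_{j(x_l)}$, and analogously $\mu_{x_1}\tensor\cdots\tensor\mu_{x_l}\approx\nu_{j(x_1)}\tensor\cdots\tensor\nu_{j(x_l)}$, so the two agree to within $\eps/2$ in TV. Bounding the contribution from the $o(1)$-fraction of ``bad'' tuples trivially by $2$ delivers $\sum_{x_1,\ldots,x_l}\TV{\mu_{x_1,\ldots,x_l}-\mu_{x_1}\tensor\cdots\tensor\mu_{x_l}}<\eps n^l$, that is, $(\eps,l)$-symmetry. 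The main obstacle is the quantitative calibration: $N(\eps_1)$ is tower-type in $1/\eps_1$, so $\delta$ must be chosen extraordinarily small relative to $\eps$ and $l$, and the passage from an averaged covariance bound to the pointwise collapse $\nu_{ij}\approx\nu_j$ across the bulk of classes requires a careful pigeonhole executed uniformly in the $t\leq N(\eps_1)$ many states.
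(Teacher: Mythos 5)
This corollary is cited from \cite{Victor} and the paper does not supply its own proof, so there is no in-paper argument to compare against; I can only assess whether your sketch would plausibly yield a valid independent proof. The overall strategy -- apply the regularity lemma (\Thm~\ref{Thm_decomp}) to obtain an $\eps_1$-homogeneous decomposition, use $(\delta,2)$-symmetry to collapse the ``mixture over states'' to a single product, and then assemble the $l$-wise estimate -- is the right one and is surely the strategy of the cited source. Your handling of the inter-state collapse is essentially sound: pairwise symmetry does bound the averaged covariance of the state-dependent class distributions, and since variance and covariance of a bounded scalar function of a single index $I\sim(p_i)$ control each other via Cauchy--Schwarz, a Markov/pigeonhole argument over the boundedly many classes does give $\nu_{ij}\approx\nu_j$ for most $i,j$ in the appropriate weighted sense.

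The genuine gap is the first display, where you claim that ``a routine consequence of the $\eps_1$-regularity in HM3'' is that for all but an $o_{\eps_1}(1)$-fraction of $l$-tuples $(x_1,\ldots,x_l)$ and each $i\in I$, $\TV{\mu_{x_1,\ldots,x_l}[\nix|S_i]-\nu_{ij(x_1)}\tensor\cdots\tensor\nu_{ij(x_l)}}<\eps_2$. This is not routine; it is precisely the substance of the cited Corollary~2.3 of~\cite{Victor}. What the regularity condition \textbf{HM3} gives directly is concentration of empirical spin statistics on large subsets of each class, and what \Lem~\ref{Lemma_regularSymmetric} then yields is that $\mu[\nix|S_i]$ is $(\eps',2)$-symmetric. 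Passing from there to $l$-wise approximate independence \emph{is} \Cor~\ref{Cor_states} applied to $\mu[\nix|S_i]$, so invoking it would be circular; and the naive chain-rule or induction-on-$l$ argument fails because $(\eps',2)$-symmetry says nothing about the conditional marginals after pinning $l-1$ coordinates. One actually needs a counting-lemma-type statement: a regular measure with concentrated empirical distribution is close to a product measure in a sense strong enough to control all bounded-order joint marginals. That argument, while standard in spirit within the regularity method, must be spelled out (e.g., by showing that conditioning on finitely many coordinates preserves the empirical concentration up to controllable error and exploiting regularity to rule out a large set of atypical tuples). Secondarily, your phrase ``$\eps_3\to0$ as $\delta\to0$'' is imprecise: $\eps_3$ also depends on $\eps_1$ through the $O(\eps_1)$ approximation error and the tower-type bound $N(\eps_1)$, so the correct quantifier order is to fix $\eps_1=\eps_1(\eps,l)$ first, read off $N(\eps_1)$, and only then choose $\delta$ small enough relative to both.
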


\begin{corollary}[{\cite[\Cor~2.4]{Victor}}]\label{Cor_states2}
For any $\eps>0$ there is a $\gamma >0$ such that for any $\eta>0$ there is $\delta>0$ such that for all $n>1/\delta$, $\mu\in\cP(\cX^n)$ the following is true:

	If $\mu$ is $(\delta,2)$-symmetric, then for any $(\gamma,2)$-state $S$ with $\mu(S)\geq\eta$ we have
	$$\sum_{x\in[n]}\TV{\mu_{x}[\nix|S]-\mu_{x}}<\eps n.$$
\end{corollary}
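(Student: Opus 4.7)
The plan is to reduce the conclusion to a covariance estimate that follows from $(\delta,2)$-symmetry alone. The starting point is the identity
\[
\mu_x[\omega|S]-\mu_x(\omega)=\frac{\bck{\vecone\{\SIGMA(x)=\omega\}\vecone_S}_\mu-\mu(S)\mu_x(\omega)}{\mu(S)}=\frac{\mathrm{Cov}_\mu(\vecone\{\SIGMA(x)=\omega\},\,\vecone_S)}{\mu(S)},
\]
so each summand of $\sum_x\TV{\mu_x[\nix|S]-\mu_x}$ is an absolute covariance scaled by $1/\mu(S)$. The task therefore reduces to bounding $\sum_x|\mathrm{Cov}_\mu(\vecone\{\SIGMA(x)=\omega\},\vecone_S)|$ for each spin $\omega\in\cX$.

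For fixed $\omega$, let $\epsilon_x=\mathrm{sign}(\mu_x[\omega|S]-\mu_x(\omega))\in\{\pm1\}$, so that this sum rewrites as $\mu(S)^{-1}\mathrm{Cov}_\mu(\sum_x\epsilon_x\vecone\{\SIGMA(x)=\omega\},\vecone_S)$. Cauchy--Schwarz for covariance bounds it by $\mu(S)^{-1}\sqrt{V_1V_2}$, with $V_2=\mathrm{Var}_\mu(\vecone_S)\le\mu(S)$ and, expanding the square,
\[
V_1=\mathrm{Var}_\mu\!\Bigl(\sum_x\epsilon_x\vecone\{\SIGMA(x)=\omega\}\Bigr)\le\sum_{x,y}\bigl|\mu_{x,y}(\omega,\omega)-\mu_x(\omega)\mu_y(\omega)\bigr|\le 2\sum_{x,y}\TV{\mu_{x,y}-\mu_x\tensor\mu_y}.
\]
Applying $(\delta,2)$-symmetry, i.e.\ the $(\delta,2)$-state property of the full cube $\cX^n$, gives $V_1<2\delta n^2$, hence $\sum_x|\mu_x[\omega|S]-\mu_x(\omega)|\le n\sqrt{2\delta/\mu(S)}\le n\sqrt{2\delta/\eta}$. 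Summing over the $|\cX|$ spins finally yields $\sum_x\TV{\mu_x[\nix|S]-\mu_x}\le\tfrac{|\cX|}{2}\,n\sqrt{2\delta/\eta}$, which is less than $\eps n$ once $\delta$ is chosen of order $\eps^2\eta/|\cX|^2$; the parameter $\gamma>0$ may be taken arbitrarily (for example $\gamma=1$).

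No serious obstacle is anticipated: the argument is a routine Cauchy--Schwarz calculation resting on the pairwise-independence estimate contained in $(\delta,2)$-symmetry. One noteworthy feature is that the $(\gamma,2)$-state hypothesis on $S$ plays no active role in this direct proof---presumably $\gamma$ is retained in the statement because the corollary is typically invoked for states produced by the homogeneous decomposition of Theorem~\ref{Thm_decomp}, where $(\gamma,2)$-state control is automatically available. The only mild bookkeeping point is to check that the $\mu(S)^{-1}$ prefactor cancels exactly with the $\sqrt{\mu(S)}$ coming from $V_2$; this cancellation is what makes the final bound uniform over all $S$ with $\mu(S)\ge\eta$.
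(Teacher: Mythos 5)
Your proof is correct, and it is a pleasingly clean argument. The identity $\mu_x[\omega|S]-\mu_x(\omega)=\mathrm{Cov}_\mu(\vecone\{\SIGMA(x)=\omega\},\vecone_S)/\mu(S)$ is right, the sign trick $\epsilon_x=\mathrm{sign}(\cdot)$ converts the sum of absolute covariances into a single covariance, and Cauchy--Schwarz gives $\bigl|\mathrm{Cov}_\mu(\sum_x\epsilon_x\vecone\{\SIGMA(x)=\omega\},\vecone_S)\bigr|\le\sqrt{V_1V_2}$ with $V_2\le\mu(S)$ and $V_1\le\sum_{x,y}|\mu_{x,y}(\omega,\omega)-\mu_x(\omega)\mu_y(\omega)|\le2\sum_{x,y}\TV{\mu_{x,y}-\mu_x\tensor\mu_y}<2\delta n^2$ by $(\delta,2)$-symmetry. (In fact $|\mu_{x,y}(\omega,\omega)-\mu_x(\omega)\mu_y(\omega)|\le\TV{\mu_{x,y}-\mu_x\tensor\mu_y}$ without the $2$, so your constant can be tightened, but this is immaterial.) The crucial cancellation $\sqrt{\mu(S)}/\mu(S)=1/\sqrt{\mu(S)}\le1/\sqrt\eta$ is exactly the point that makes the bound uniform over $S$ with $\mu(S)\ge\eta$, and summing over the $|\cX|$ spins gives the claimed $O(n\sqrt{\delta/\eta})$, which is $<\eps n$ once $\delta=O(\eps^2\eta/|\cX|^2)$. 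The quantifier structure (choose $\gamma$ depending only on $\eps$, then $\delta$ depending on $\eps,\eta$) is respected, trivially, because $\gamma$ is unused.

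Because the present paper imports this as a black-box citation of~\cite[Cor.~2.4]{Victor} and gives no internal proof, there is no ``paper's argument'' here to compare against; one can only speculate that the source derives it as a by-product of the homogeneous decomposition machinery of Theorem~\ref{Thm_decomp}, where the $(\gamma,2)$-state hypothesis on $S$ would be structurally natural. Your observation that the $(\gamma,2)$-state hypothesis is entirely dispensable is correct and is the most interesting feature of your approach: your bound holds for \emph{every} event $S$ with $\mu(S)\ge\eta$, not merely for states produced by the regularity decomposition. This is a genuine (if modest) strengthening, and the Cauchy--Schwarz route is more elementary and self-contained than anything passing through the partition $(\vV,\vS)$. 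The only bookkeeping point worth flagging explicitly is the diagonal terms $x=y$ in the bound on $V_1$: these contribute at most $n$ to $\sum_{x,y}\TV{\mu_{x,y}-\mu_x\tensor\mu_y}$, which is absorbed by $\delta n^2$ precisely because the statement restricts to $n>1/\delta$, so the hypothesis is used in full.
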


\begin{lemma}[{\cite[\Lem~2.8]{Victor}}]\label{Lemma_regularSymmetric}
For any $\eps'>0$ there is $\eps>0$ such that for $n>1/\eps$ the following is true:

Assume that $\mu\in\cP(\cX^n)$ is $\eps$-regular with respect to a partition $\vec V$.
The measure $\mu$ is $(\eps',2)$-symmetric if
	\begin{align*}
	\sum_{i\in[\#\vec V]}|V_i|\bck{\TV{\SIGMA[\nix|V_i]-\bck{\TAU[\nix|V_i]}_\mu}}_\mu<\eps n.
	\end{align*}
\end{lemma}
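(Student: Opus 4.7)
The plan is to combine the $\eps$-regularity of $\mu$ with respect to $\vec V$ with the concentration hypothesis to obtain
\[
\sum_{x,y\in[n]}\TV{\mu_{x,y}-\mu_x\tensor\mu_y}<\eps' n^2,
\]
which is precisely $(\eps',2)$-symmetry. Write $\nu_i=\bck{\TAU[\nix|V_i]}_\mu$ for the block-averaged marginal measure associated with $V_i$, and let $J\subset[\#\vec V]$ denote the set of regular blocks granted by $\eps$-regularity of $\mu$ with respect to $\vec V$, so that $\sum_{i\notin J}|V_i|\leq\eps n$. The argument proceeds in three stages.

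First I would use $\eps$-regularity within each block to show that $\mu_x\approx\nu_i$ for most $x\in V_i$, $i\in J$. The key observation is that if $S=\{x\in V_i:\mu_x(\omega)\geq\nu_i(\omega)+\delta\}$ satisfied $|S|\geq\eps|V_i|$, then averaging $\mu_x(\omega)$ over $S$ would give $\bck{\SIGMA[\omega|S]}_\mu\geq\nu_i(\omega)+\delta$, contradicting $\bck{\TV{\SIGMA[\nix|S]-\SIGMA[\nix|V_i]}}_\mu<\eps$ as soon as $\delta>2\eps$. Iterating over $\omega\in\cX$ and both directions of deviation, with $\delta$ tuned to $\sqrt\eps$, yields
\[
\sum_{i\in J}\sum_{x\in V_i}\TV{\mu_x-\nu_i}\leq C\sqrt\eps\,n
\]
for a constant $C=C(|\cX|)$.

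Second, the triangle inequality gives, for $x\in V_i$ and $y\in V_j$,
\[
\TV{\mu_{x,y}-\mu_x\tensor\mu_y}\leq\TV{\mu_{x,y}-\nu_i\tensor\nu_j}+\TV{\mu_x-\nu_i}+\TV{\mu_y-\nu_j}.
\]
Summed over $x,y\in[n]$, Stage 1 bounds the marginal terms by $O(\sqrt\eps)\,n^2$ (pairs touching a block outside $J$ contribute at most $\eps n^2$ trivially). The task therefore reduces to controlling $\sum_{i,j\in J}\sum_{x\in V_i,\,y\in V_j}\TV{\mu_{x,y}-\nu_i\tensor\nu_j}$.

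Third---the main obstacle---I need to bound the block-biased joint discrepancy. Writing $T_\omega^i(\sigma)=\sigma[\omega|V_i]$, the identity
\[
\sum_{x\in V_i,\,y\in V_j}\mu_{x,y}(\omega,\omega')=|V_i||V_j|\bck{T_\omega^i\,T_{\omega'}^j}_\mu
\]
combined with $|T_{\omega'}^j-\nu_j(\omega')|\leq 1$ and the hypothesis (which bounds $\bck{|T_\omega^i-\nu_i(\omega)|}_\mu$ in the $|V_i|$-weighted average over $i$) shows that the \emph{block-averaged} joint distribution is close to $\nu_i\tensor\nu_j$. The delicate point is passing from this signed, aggregate statement to a bound on $\sum_{x,y}|\mu_{x,y}(\omega,\omega')-\nu_i(\omega)\nu_j(\omega')|$. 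I would close this gap via one of two routes: (i) a pairwise analogue of the regularity argument of Stage 1, partitioning $V_i\times V_j$ according to the sign of $\mu_{x,y}-\nu_i\nu_j$ and ruling out large positive (resp.\ negative) parts by contradiction with the hypothesis; or (ii) Cauchy--Schwarz, $(\sum|\mu_{x,y}-\nu_i\nu_j|)^2\leq|V_i||V_j|\sum(\mu_{x,y}-\nu_i\nu_j)^2$, expanding the $L^2$ quantity via two independent samples $\SIGMA_1,\SIGMA_2$ from $\mu$ and estimating the resulting block-overlap variances using the hypothesis. Either route converts concentration of $T_\omega^i$ into pointwise decorrelation of pair-marginals, completing the proof.
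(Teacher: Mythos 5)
Note first that the paper does not prove Lemma~\ref{Lemma_regularSymmetric}; it is quoted verbatim from~\cite[\Lem~2.8]{Victor}, so there is no internal proof to compare your attempt against. Judging your sketch on its own terms: Stages~1 and~2 are sound. The argument that $\eps$-regularity forces $\mu_x\approx\nu_i$ for all but an $O(\eps)$-fraction of $x\in V_i$ is correct (in fact, taking $\delta\approx3\eps$ rather than $\sqrt\eps$ already yields an $O(\eps n)$ bound, so your $\sqrt\eps$ is needlessly pessimistic but harmless), and the triangle-inequality reduction to controlling $\sum_{x\in V_i,\,y\in V_j}\TV{\mu_{x,y}-\nu_i\tensor\nu_j}$ is fine. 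You have also correctly located the crux: the concentration hypothesis only controls the aggregate block statistics $T_\omega^i=\SIGMA[\omega|V_i]$, and one must upgrade this to pointwise pair decorrelation.

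However, neither proposed closing of the Stage~3 gap is yet a proof, and route~(ii) has an identifiable flaw. Expanding the square using two independent replicas $\SIGMA_1,\SIGMA_2$ gives
\[
\sum_{x\in V_i,\,y\in V_j}\mu_{x,y}(\omega,\omega')^2
=\sum_{x,y}\bck{\vecone\{\SIGMA_1(x)=\omega,\SIGMA_1(y)=\omega'\}\vecone\{\SIGMA_2(x)=\omega,\SIGMA_2(y)=\omega'\}}_{\mu\tensor\mu}
=|V_i||V_j|\bck{R_\omega^i\,R_{\omega'}^j}_{\mu\tensor\mu},
\]
where $R_\omega^i(\sigma,\tau)=\frac1{|V_i|}\sum_{x\in V_i}\vecone\{\sigma(x)=\tau(x)=\omega\}$ is the \emph{block overlap}. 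What appears is not $T_\omega^i(\SIGMA_1)\,T_\omega^i(\SIGMA_2)$ but $R_\omega^i(\SIGMA_1,\SIGMA_2)$, and the hypothesis $\sum_i|V_i|\bck{\TV{T^i-\nu_i}}_\mu<\eps n$ says nothing directly about overlaps. The missing step is that $\eps$-regularity, applied to the \emph{random} subset $S(\SIGMA_2)=\{x\in V_i:\SIGMA_2(x)=\omega\}\subset V_i$ (which has relative size $T_\omega^i(\SIGMA_2)$), yields
\[
R_\omega^i(\SIGMA_1,\SIGMA_2)=T_\omega^i(\SIGMA_2)\cdot\SIGMA_1[\omega|S(\SIGMA_2)]\approx T_\omega^i(\SIGMA_2)\,T_\omega^i(\SIGMA_1)
\qquad\text{whenever }T_\omega^i(\SIGMA_2)\geq\eps,
\]
and only after this conversion does the concentration hypothesis $T_\omega^i\approx\nu_i(\omega)$ become applicable to both factors. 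This device---using regularity over a random subset determined by one replica to decouple the overlap into a product of one-replica statistics---is exactly what makes both assumptions necessary, and it is absent from your sketch. Route~(i) suffers a related problem: $\eps$-regularity is a statement about subsets of $V_i$, not of $V_i\times V_j$, so partitioning $V_i\times V_j$ by the sign of $\mu_{x,y}-\nu_i(\omega)\nu_j(\omega')$ does not hand you a large set to which regularity applies; you would still need to slice by one coordinate and invoke the random-subset argument above.
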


\noindent
Additionally, we need the following observation, whose proof follows that of~\cite[\Cor~2.4]{Victor}.

\begin{lemma}\label{Lemma_switch}
For any $\eps>0$ there is $\xi>0$ and $n_0>0$ such that for any $n>n_0$ and the following holds.
Suppose that $\mu$ is $\xi$-homogeneous w.r.t.\ $(\vV,\vS)$ and that $j\in[\#\vS]$ is such that $\mu[\nix|S_j]$ is $\xi$-regular w.r.t.\ $\vV$.
Then for any $\sigma\in S_j$,
	$$\sum_{i\in[\#\vV]}\sum_{x\in V_i}\TV{\mu_x[\nix|S_j]-\sigma[\nix|V_i]}<\eps n.$$
\end{lemma}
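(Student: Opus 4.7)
The plan is to compare $\mu_x[\nix|S_j]$ and $\sigma[\nix|V_i]$ via the intermediate quantity $\bar\pi_i=\bck{\TAU[\nix|V_i]}_{\mu[\nix|S_j]}\in\cP(\cX)$, the average empirical $V_i$-distribution under the conditional measure.  A short calculation exchanging the two summations shows that $\bar\pi_i(\omega)=\frac1{|V_i|}\sum_{x\in V_i}\mu_x[\omega|S_j]$, so $\bar\pi_i$ is precisely the mean over $V_i$ of the conditional marginals.  By the triangle inequality,
\[
\TV{\mu_x[\nix|S_j]-\sigma[\nix|V_i]} \;\le\; \TV{\mu_x[\nix|S_j]-\bar\pi_i}+\TV{\bar\pi_i-\sigma[\nix|V_i]},
\]
which reduces the task to bounding the two summands separately after summing over $x\in V_i$ and $i\in[\#\vV]$.

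The second summand is immediate: because $\bar\pi_i$ is a convex combination of $\tau[\nix|V_i]$ over $\tau\in S_j$, assumption \textbf{HM2} yields $\TV{\bar\pi_i-\sigma[\nix|V_i]}<\xi$ for every $\sigma\in S_j$ and every block $V_i$, contributing at most $\xi n$ in total.

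The first summand is the heart of the matter.  Fix a spin $\omega\in\cX$ and set $f_x=\mu_x[\omega|S_j]-\bar\pi_i(\omega)$ for $x\in V_i$; the identity above yields $\sum_{x\in V_i}f_x=0$, so $\sum_{x\in V_i}|f_x|=2\sum_{x\in T^+}f_x$ with $T^+=\{x\in V_i:f_x>0\}$.  Let $J\subset[\#\vV]$ be the set of blocks on which $\mu[\nix|S_j]$ is $\xi$-regular, so $\sum_{i\in J}|V_i|\ge(1-\xi)n$.  For $i\in J$ I distinguish two cases.  If $|T^+|\ge\xi|V_i|$, the $\xi$-regularity of $\mu[\nix|S_j]$ on $V_i$ applied to $T^+$ gives
\[
\Bigl|\bck{\SIGMA[\omega|T^+]-\SIGMA[\omega|V_i]}_{\mu[\nix|S_j]}\Bigr| \;\le\; 2\bck{\TV{\SIGMA[\nix|T^+]-\SIGMA[\nix|V_i]}}_{\mu[\nix|S_j]} \;<\; 2\xi,
\]
which unpacks to $\frac1{|T^+|}\sum_{x\in T^+}f_x<2\xi$ and hence $\sum_{x\in T^+}f_x\le2\xi|V_i|$.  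If instead $|T^+|<\xi|V_i|$, then since $f_x\le1$ the trivial bound $\sum_{x\in T^+}f_x\le|T^+|<\xi|V_i|$ already suffices.  Summing over the $|\cX|$ spins and over $i\in J$ therefore bounds the contribution of the first summand to $\sum_{i,x\in V_i}\TV{\mu_x[\nix|S_j]-\bar\pi_i}$ by $O(|\cX|\xi n)$, while the non-regular blocks $i\notin J$ add at most $\sum_{i\notin J}|V_i|\le\xi n$ via the trivial bound.

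Altogether this yields $\sum_{i\in[\#\vV]}\sum_{x\in V_i}\TV{\mu_x[\nix|S_j]-\sigma[\nix|V_i]}\le C|\cX|\xi n$ for an absolute constant $C$, so taking $\xi$ small enough in terms of $\eps$ and $|\cX|$ (and $n$ large enough for the regularity to be non-vacuous) establishes the lemma.  The only mildly delicate step is the passage from the regularity hypothesis, which only controls averages over \emph{large} subsets $T\subset V_i$, to a per-vertex bound on the conditional marginals $\mu_x[\nix|S_j]$; the positive/negative split together with the trivial treatment of small $T^+$ handles this cleanly, mirroring the argument for \Cor~\ref{Cor_states2}.
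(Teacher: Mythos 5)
Your proposal is correct and, modulo small bookkeeping, proves the lemma. It shares the paper's starting point — both compare $\mu_x[\nix|S_j]$ with $\sigma[\nix|V_i]$ via the intermediary $\bar\pi_i=\bck{\SIGMA[\nix|V_i]}_{\mu[\nix|S_j]}$ (called $\bar\nu_i$ in the paper) and handle the step $\bar\pi_i\leftrightarrow\sigma[\nix|V_i]$ via \textbf{HM2} — but the workhorse for the step $\mu_x[\nix|S_j]\leftrightarrow\bar\pi_i$ is genuinely different. The paper introduces a second threshold parameter $\eta$ (with $\eta>\xi$), defines the bad set $W_i(\omega)=\{x\in V_i:\mu_x[\omega|S_j]<\bar\pi_i(\omega)-\eta\}$, and bounds $|W_i(\omega)|\leq\xi|V_i|$ by a Markov-inequality / second-moment style argument: it shows that if $W_i(\omega)$ were large, the expected block average $\SIGMA[\omega|W_i(\omega)]$ would have to deviate from $\SIGMA[\omega|V_i]$ by at least $\Omega(\eta^2)$ with constant probability, contradicting $\xi$-regularity. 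Because $W_i(\omega)$ only captures downward deviations, the paper then recovers control of upward deviations by appealing to normalization $\sum_\omega\mu_x[\omega|S_j]=1$. Your argument sidesteps all of this: the identity $\sum_{x\in V_i}f_x=0$ lets you write $\sum_{x\in V_i}|f_x|=2\sum_{x\in T^+}f_x$ and then apply $\xi$-regularity \emph{directly} to the signed set $T^+$ (with the trivial $f_x\le1$ bound when $T^+$ is too small for regularity to apply). This removes the auxiliary parameter $\eta$, avoids the Markov step, and gives two-sided control in one stroke. Both routes are valid; yours is somewhat more streamlined, at the cost of explicitly tracking the sets $T^+$ per spin rather than a single threshold event.
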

\begin{proof}
Given $\eps>0$ choose $\eta=\eta(\eps)>\xi=\xi(\eta)>0$ sufficiently small and assume that $n$ is large enough.
With $(\vV,\vS)$ and $j$ as above set $\nu=\mu[\nix|S_j]$ for brevity.
Suppose that $i\in[\#\vV]$ is such that $\nu$ is $\xi$-regular on $V_i$ and let $\bar\nu_i(\omega)=\bck{\SIGMA[\omega|V_i]}_\nu$ for $\omega\in\Omega$.
Further, let $W_i(\omega)=\{x\in V_i:\nu_x(\omega)<\bar\nu_i(\omega)-\eta\}$ and suppose $W_i(\omega)\neq\emptyset$.
Then $\bck{\SIGMA[\omega|W_i(\omega)]}_\nu<\bar\nu_i(\omega)-\eta$ by the linearity of expectation.
Hence, by Markov's inequality
	\begin{align*}
	\bck{\vecone\{\SIGMA[\omega|W_i(\omega)]\geq\nu_i(\omega)-\eta/4\}}_\nu&\leq\frac{\bar\nu_i(\omega)-\eta}{\bar\nu_i(\omega)-\eta/4}
		\leq\frac{1-\eta}{1-\eta/4}\leq1-\eta/2.
	\end{align*}
Consequently, {\bf HM2} yields
	$\bck{|\SIGMA[\omega|W_i(\omega)]-\SIGMA[\omega|V_i]|}_\nu\geq
	\bck{|\SIGMA[\omega|W_i(\omega)]-\nu_i(\omega)|}_\nu-\xi\geq\eta^2/8$.
Because $\nu$ is $\xi$-regular on $V_i$, we conclude that $|W_i(\omega)|\leq\xi|V_i|$.
Since this works for every $\omega\in\Omega$, the assertion follows from the triangle inequality and {\bf HM1}--{\bf HM3}.
\end{proof}

\noindent
Finally, we recall the following folklore fact about Poisson random factor graphs.

\begin{fact}\label{Fact_sparse}
For any $\eps>0$ there is $\delta>0$ such that \whp\ the Poisson random factor graph $\G_n$ has the following property.
	\begin{quote}
	For all sets $U\subset\{x_1,\ldots,x_n\}$ of variable nodes of size $|U|\leq\delta n$ we have
		$\sum_{x\in U}d_{\G_n}(x)\leq\eps n$.
	\end{quote}
\end{fact}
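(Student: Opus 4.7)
The plan is a standard Chernoff plus union bound argument on slot incidences. Let $m=|F(\G_n)|$, and observe that $\sum_{x\in U}d_{\G_n}(x)$ is bounded above by the number of slot incidences into $U$, i.e., the number of pairs $(i,j)\in[m]\times[k]$ such that the $j$-th entry of the sequence $\partial a_i$ lies in $U$. Since the $km$ slot variables are i.i.d.\ uniform on $\{x_1,\ldots,x_n\}$, this upper bound is distributed as $\Bin(km,|U|/n)$ conditional on $m$. Because $m\sim\Po(dn/k)$, standard Poisson concentration gives $m\le 2dn/k$ \whp, and on that event the total $U$-degree is stochastically dominated by $\Bin(2dn,s/n)$, where $s=|U|$, with mean $2ds$.

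For each fixed $s$ with $1\le s\le\delta n$, the multiplicative Chernoff bound yields
\begin{equation*}
\pr\brk{\Bin(2dn,s/n)\ge \eps n}\le \exp\bc{-\eps n\bc{\ln\bc{\tfrac{\eps n}{2ds}}-1}},
\end{equation*}
valid whenever $\eps n/(2ds)\ge 1$. Taking a union bound over the $\binom{n}{s}\le\exp(s\ln(en/s))$ sets $U$ of size $s$, writing $s=\alpha n$, and summing over $s=1,\ldots,\lfloor\delta n\rfloor$, we find that the probability of a ``bad'' set existing at all is bounded by
\begin{equation*}
n\cdot\max_{1/n\le\alpha\le\delta}\exp\bc{n\brk{\alpha\bc{1+\ln(1/\alpha)}-\eps\ln\bc{\tfrac{\eps}{2d\alpha}}+\eps}}.
\end{equation*}

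Finally, I would check that this tends to $0$. As $\alpha\searrow 0$ the union-bound entropy $\alpha(1+\ln(1/\alpha))$ tends to $0$, while the Chernoff gain $\eps\ln(\eps/(2d\alpha))$ tends to $+\infty$; so for $\delta=\delta(\eps,d,k)>0$ small enough, the bracketed exponent is strictly negative and bounded away from $0$ uniformly in $\alpha\in(1/n,\delta]$. Combined with the $o(1)$ failure probability of the event $\{m\le 2dn/k\}$ this proves the claim. The only mildly delicate point is to verify that the $\eps$-weighted Chernoff gain $\eps\ln(1/\alpha)$ genuinely beats the $\alpha$-weighted entropy $\alpha\ln(1/\alpha)$ throughout $\alpha\in(1/n,\delta]$. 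Since the former has a fixed $\eps$ in front while the latter has a vanishing $\alpha$, this reduces to choosing $\delta$ small enough in terms of $\eps$ and $d$, and no real obstacle arises.
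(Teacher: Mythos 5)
Your proof is correct. The paper states Fact~\ref{Fact_sparse} as a folklore fact without giving a proof, so there is no paper argument to compare against; your Chernoff--plus--union--bound argument over slot incidences is a clean and standard way to establish it. A few small points worth noting for a write-up: the degree sum $\sum_{x\in U}d_{\G_n}(x)$ is indeed only \emph{bounded above} by the slot-incidence count (since $d_{\G_n}(x)=|\partial x|$ counts a constraint once even if $x$ appears in it multiple times), which you correctly use; the condition $\eps n/(2ds)\ge 1$ needed for the Chernoff exponent to be nonpositive is automatic once $\delta\le\eps/(2d)$; and the supremum over $\alpha\in(1/n,\delta]$ of
\begin{align*}
\alpha\bc{1+\ln(1/\alpha)}-\eps\ln\bc{\tfrac{\eps}{2d\alpha}}+\eps
&=\alpha\bc{1+\ln(1/\alpha)}+\eps\bc{1+\ln\tfrac{2d}{\eps}}+\eps\ln\alpha
\end{align*}
is indeed bounded above by $1+\eps(1+\ln(2d/\eps))+\eps\ln\delta$ (using $\alpha(1+\ln(1/\alpha))\le 1$ on $(0,1]$), which is a strictly negative constant once $\delta$ is chosen smaller than $\exp\bc{-\tfrac1\eps-1-\ln\tfrac{2d}{\eps}}$. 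Together with the $o(1)$ probability of $\{m>2dn/k\}$, this gives the fact.
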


\section{Poisson factor graphs}\label{Sec_Poisson}

\noindent{\em Throughout this section we fix $(d,\Omega,k,\Psi,\rho)$ such that (\ref{eqRS}) is satisfied.
	Let $\Psi^*=\{\psi^J:\psi\in\Psi,J\subset[k]\}$.}

\subsection{Proof of \Thm~\ref{Thm_RSBP}}
We begin with the following lemma that will prove useful in \Sec~\ref{Sec_regular} as well.

\begin{lemma}\label{Lemma_cavityRS}
For any integer $L>0$ and any $\alpha>0$ there exist $\eps=\eps(\alpha,L,\Psi)>0$, $n_0=n_0(\eps,L)$ such that the following is true.
Suppose that $G$ is a factor graph with $n>n_0$ variable nodes such that $\psi_a\in\Psi^*$ for all $a\in F(G)$.
Moreover, assume that $\mu_G$ is $(\eps,2)$-symmetric.
If $G^+$ is obtained from $G$ by adding $L$ constraint nodes $b_1,\ldots,b_L$ with weight functions $\psi_{b_1},\ldots,\psi_{b_L}\in\Psi^*$ arbitrarily, then
$\mu_{G^+}$ is $(\alpha,2)$-symmetric and
	\begin{align}\label{eqLemma_cavityRS}
	\sum_{x\in V(G)}\TV{\mu_{G,x}-\mu_{G^+,x}}&<\alpha n.
	\end{align}
\end{lemma}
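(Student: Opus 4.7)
The plan is to view $\mu_{G^+}$ as a local reweighting of $\mu_G$ and to exploit the regularity decomposition from Theorem~\ref{Thm_decomp}. Writing $F(\sigma) = \prod_{i=1}^L \psi_{b_i}(\sigma(\partial b_i))$, one has $\mu_{G^+}(\sigma) = F(\sigma)\mu_G(\sigma)/\bck{F}_{\mu_G}$, where $F$ depends only on the coordinate set $Y := \bigcup_{i=1}^L \partial b_i$ of size $|Y| \le m := Lk$. Since $\Psi^*$ is a finite set of strictly positive functions, there are $0 < c_1 \le c_2$ depending only on $\Psi$ with $F \in [c_1^L, c_2^L]$, so $\|F\|_\infty/\bck{F}_{\mu_G} \le C := (c_2/c_1)^L$. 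From the covariance identity $\mu_{G^+,x}(\omega) - \mu_{G,x}(\omega) = \mathrm{Cov}_{\mu_G}(F, \vecone\{\SIGMA(x) = \omega\})/\bck{F}_{\mu_G}$ a direct estimate yields
\begin{align*}
\sum_{x \in V(G)} \TV{\mu_{G, x} - \mu_{G^+, x}} \le C \sum_{x \in V(G)} \TV{\mu_{G, Y, x} - \mu_{G, Y} \otimes \mu_{G, x}},
\end{align*}
so (\ref{eqLemma_cavityRS}) reduces to bounding the right-hand side by $\alpha n/C$ for the arbitrary fixed $Y$.

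Next I would apply Theorem~\ref{Thm_decomp} with $\xi = \xi(\alpha, L, \Psi)$ sufficiently small to produce partitions $(\vV, \vS)$ w.r.t.\ which $\mu_G$ is $\xi$-homogeneous; let $I$ be the index set from {\bf HM1}. Choosing $\eps$ small enough in terms of $\xi$ forces, via the machinery of~\cite{Victor}, each good state $S_j$ ($j \in I$) to be a $(\gamma, 2)$-state with $\gamma = \gamma(\eps) \to 0$ as $\eps \to 0$. Consequently Corollary~\ref{Cor_states} upgrades $\mu_G[\cdot | S_j]$ to $(\gamma', m{+}1)$-symmetry for any prescribed $\gamma'$, Corollary~\ref{Cor_states2} delivers $\sum_x \TV{\mu_{G, x}[\cdot | S_j] - \mu_{G, x}} < \alpha_1 n$ for every $j \in I$, and Lemma~\ref{Lemma_switch} identifies (up to $\alpha_1 n$) the state-conditional marginal $\mu_{G, x}[\cdot | S_j]$ with the empirical part-distribution $\sigma[\cdot | V_{i(x)}]$ for any $\sigma \in S_j$. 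The upshot is a \emph{near-product-over-parts} structure: conditional on any good $S_j$, the measure is within $o(1)$ total variation of a distribution that is product across the parts of $\vV$ and i.i.d.\ within each part.

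With this structure in hand, I would decompose
\begin{align*}
\mu_{G, Y, x} - \mu_{G, Y} \otimes \mu_{G, x} = \sum_{j \in I} \mu_G(S_j)\bigl(\mu_{Y, x}[\cdot | S_j] - \mu_Y[\cdot | S_j] \otimes \mu_x[\cdot | S_j]\bigr) + (\text{between-state terms}) + O(\xi).
\end{align*}
The within-state contribution is controlled uniformly in $x$ using the near-product-over-parts structure: if $x$ lies in a part of $\vV$ disjoint from the parts of $Y$, independence is automatic, and if $x$ shares a part with some $y \in Y$ the within-part near-i.i.d.\ structure still yields approximate independence (with the $|Y| \le m$ variables in $Y$ itself contributing only $O(1)$). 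The between-state terms are handled by Corollary~\ref{Cor_states2}, which makes the state-conditional marginals uniformly close to $\mu_{G, x}$. Assembling the bounds with the constant $C$ then gives (\ref{eqLemma_cavityRS}). The $(\alpha, 2)$-symmetry of $\mu_{G^+}$ follows by repeating the argument for pair marginals $\mu_{G^+, x, y}$: inside each good state the reweighting by $F$ modifies only the joint on $Y$, so pairs $(x, y) \notin Y$ retain the near-product-over-parts structure; averaging over the (boundedly reweighted) state contributions and invoking Lemma~\ref{Lemma_regularSymmetric} yields $(\alpha, 2)$-symmetry of $\mu_{G^+}$.

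The main technical obstacle is rigorously justifying the near-product-over-parts structure within a good state \emph{for the fixed set $Y$}, since Corollary~\ref{Cor_states} only supplies $(\gamma', m{+}1)$-symmetry as an average over random $(m{+}1)$-tuples. Bridging this gap relies on combining the $\xi$-regularity of $\mu_G[\cdot | S_j]$ on each part $V_i$ ({\bf HM3}) with the concentration of empirical distributions from {\bf HM2}, in the spirit of Lemma~\ref{Lemma_switch}, together with the fact that $\#\vV \le N(\xi)$ is bounded independently of $n$ so that $Y$ touches only $O(1)$ parts.
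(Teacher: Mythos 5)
Your covariance reduction is correct as far as it goes: writing $F(\sigma)=\prod_{i\le L}\psi_{b_i}(\sigma(\partial b_i))$, one indeed has $\mu_{G^+,x}(\omega)-\mu_{G,x}(\omega)=\mathrm{Cov}_{\mu_G}(F,\vecone\{\SIGMA(x)=\omega\})/\bck{F}_{\mu_G}$ and hence $\sum_x\TV{\mu_{G,x}-\mu_{G^+,x}}\leq C\sum_x\TV{\mu_{G,Y,x}-\mu_{G,Y}\otimes\mu_{G,x}}$ with $C=C(L,\Psi)$. But the ``main technical obstacle'' you flag is a genuine gap, and the bridge you sketch does not close it. The $(\eps,2)$-symmetry hypothesis, \Thm~\ref{Thm_decomp}, {\bf HM1}--{\bf HM4}, \Lem~\ref{Lemma_switch} and \Cor s~\ref{Cor_states},~\ref{Cor_states2} are all \emph{average} statements over coordinates or over random tuples: they control the empirical distribution on large subsets of a part, the typical single-variable marginal $\mu_x[\nix|S_j]$, and the joint law of a \emph{random} $(m{+}1)$-tuple. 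They say nothing about the joint law of a \emph{fixed, adversarially chosen} set $Y$ of $O(1)$ coordinates, which may well sit inside the $O(\eps n)$-sized exceptional set that every one of these statements tolerates. Moreover you need not merely the single marginals $\mu_y[\nix|S_j]$, $y\in Y$, but approximate product structure of the entire block marginal $\mu_Y[\nix|S_j]$, which is a strictly stronger property that regularity of parts does not deliver. So proving $\sum_x\TV{\mu_{G,Y,x}-\mu_{G,Y}\otimes\mu_{G,x}}=o(n)$ uniformly in $Y$ from $(\eps,2)$-symmetry would require a genuinely new argument, and as written your proposal simply leaves it open. The same problem recurs in your treatment of $(\alpha,2)$-symmetry of $\mu_{G^+}$, which again relies on the claimed near-product structure on $Y$.

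The paper sidesteps this entirely by a different device: it applies \Thm~\ref{Thm_decomp} to $\mu_{G^+}$ rather than to $\mu_G$, and then uses the two-sided density bound $\delta\leq\mu_G(\sigma)/\mu_{G^+}(\sigma)\leq1/\delta$ (available because $\Psi^*$ is a finite set of strictly positive functions) to transfer $\eps$-regularity of $\mu_{G^+}[\nix|S_j]$ into $\eps/\delta^2$-regularity of $\mu_G[\nix|S_j]$ and to keep $\mu_G(S_j)$ bounded below. Inside each good state $S_j$, \Lem~\ref{Lemma_switch} then identifies \emph{both} $\mu_{G,x}[\nix|S_j]$ and $\mu_{G^+,x}[\nix|S_j]$ with the same empirical vector $\sigma[\nix|V_{i(x)}]$, so the two conditional marginals are close to each other, while \Cor~\ref{Cor_states2} together with the $(\eps,2)$-symmetry hypothesis pulls $\mu_{G,x}[\nix|S_j]$ back to $\mu_{G,x}$; averaging over $j$ with weights $\mu_{G^+}(S_j)$ gives (\ref{eqLemma_cavityRS}). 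The $(\alpha,2)$-symmetry of $\mu_{G^+}$ then comes directly from {\bf HM4} and \Lem~\ref{Lemma_regularSymmetric} applied to the $\mu_{G^+}$-partition. The crucial feature is that at no point is the joint law on the fixed set $Y$ ever compared to a product; the bounded density ratio takes the place of that comparison. You would need either to adopt this route or to supply a self-contained proof of the fixed-$Y$ decorrelation claim before your argument could be considered complete.
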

\begin{proof}
Because all functions $\psi\in\Psi$ are strictly positive, there exists $\delta=\delta(L,\Psi)>0$ such that
for any $\psi_1,\ldots,\psi_L\in\Psi^*$ the following is true.
Suppose that $\psi_i:\Omega^{k_i}\to(0,\infty)$.
Then
	\begin{align}\label{eqLemma_cavityRS0}
	\delta\leq\prod_{i=1}^L\min\{\psi_i(\sigma):\sigma\in\Omega^{k_i}\}\leq\prod_{i=1}^L\max\{\psi_i(\sigma):\sigma\in\Omega^{k_i}\}\leq1/\delta.
	\end{align}
Now, given $\alpha>0$ choose $\eps''=\eps''(\alpha,\delta)>\eps'=\eps'(\eps'')>\eps=\eps(\eps')>0$ small enough,
	let $N=N(\eps)$ be the number promised by \Thm~\ref{Thm_decomp}
	 and assume $n>n_0=n_0(\eps,N)$ is large enough.
By \Thm~\ref{Thm_decomp}  $\mu_{G^+}$ is $\eps$-homogeneous with respect to partitions $(\vV,\vS)$
of $V(G^+)$ and $\Omega^{V(G^+)}$ of sizes $K=\#\vV$ and $L=\#\vS$ such that $K+L\leq N$.
Let  $J$ be the set of all $j\in[L]$ such that $\mu_{G^+}(S_j)\geq\eps/N$ and $\mu_{G^+}[\nix|S_j]$ is $\eps$-regular w.r.t.\ $\vV$.
Then {\bf HM1} and {\bf HM3} ensure that
	\begin{equation}\label{eqLemma_cavityRS2}
	\sum_{j\not\in J}\mu_{G^+}(S_j)<2\eps.
	\end{equation}

We claim that $\mu_{G}[\nix|S_j]$ is $\eps/\delta^2$-regular w.r.t.\ $\vV$ for all $j\in J$.
Indeed, suppose that $\mu_{G^+}$ is $\eps$-regular on $V_i$ and let $U\subset V_i$ be a subset of size $|U|\geq\eps|V_i|$.
Because $G^+$ is obtained from $G$ by adding $L$ constraint nodes, the definition (\ref{eqGibbs}) of the Gibbs measure and the
choice (\ref{eqLemma_cavityRS1}) of $\delta$ ensure that
	\begin{align}\label{eqLemma_cavityRS1}
	\delta\leq\frac{\mu_{G}(\sigma)}{\mu_{G^+}(\sigma)}&\leq1/\delta\qquad\mbox{for all }\sigma\in\Omega^{V(G^+)}.
	\end{align}
Further, (\ref{eqLemma_cavityRS1}) yields
	\begin{align*}
	\bck{\TV{\SIGMA[\nix|V_i]-\SIGMA[\nix|U]}}_{\mu_{G}[\nix|S_j]}
		&=\sum_{\sigma\in\Omega^{V(G)}}\mu_{G}(\sigma|S_j)\TV{\sigma[\nix|V_i]-\sigma[\nix|U]}\leq
			\delta^{-2}\bck{\TV{\SIGMA[\nix|V_i]-\SIGMA[\nix|U]}}_{\mu_{G^+}[\nix|S_j]}<\eps/\delta^2,
	\end{align*}
whence the $\eps/\delta^2$-regularity of $\mu_{G}[\nix|S_j]$ follows.

Moreover, by {\bf HM2} and the triangle inequality for any $j\in J$ we have
	\begin{align}\label{eqLemma_regularSymmetric0}
	\sum_{i\in[\#\vec V]}\frac{|V_i|}{n}\bck{\TV{\SIGMA[\nix|V_i]-\bck{\TAU[\nix|V_i]}_{\mu_{G}[\nix|S_j]}}}_{\mu_{G}[\nix|S_j]}<3\eps.
	\end{align}
In combination with \Lem~\ref{Lemma_regularSymmetric} and the  $\eps/\delta^2$-regularity of $\mu_{G}[\nix|S_j]$, 
(\ref{eqLemma_regularSymmetric0}) implies that
$S_j$ is an $(\eps',2)$-state of $\mu_{G}$ for every $j\in J$, provided that $\eps=\eps(\eps')>0$ was chosen small enough.
In addition, (\ref{eqLemma_cavityRS1}) implies that $\mu_G(S_j)\geq\delta^2\eps/N$ for all $j\in J$.
Consequently, \Cor~\ref{Cor_states2} and our assumption (\ref{eqRS}) entail that for each $j\in J$,
	\begin{align}\label{eqLemma_cavityRS3}
	\sum_{x\in V}\TV{\mu_{G,x}-\mu_{G,x}[\nix|S_j]}&<\eps''n,
	\end{align}
provided $\eps'=\eps'(\eps'')>0$ is sufficiently small and $n>n_0$ is large enough.
Further, by \Lem~\ref{Lemma_switch} and $\eps/\delta^2$-regularity,
	\begin{align*}
	\sum_{i=1}^K\sum_{x\in V_i}\TV{\mu_{G, x}[\nix|S_j]-\sigma[\nix|V_i]}&<\eps''n\qquad\mbox{for all }j\in J,\ \sigma\in S_j.
	\end{align*}
Hence, by (\ref{eqLemma_cavityRS3}) and the triangle inequality,
	\begin{align}\label{eqLemma_cavityRS4}
	\sum_{i=1}^K\sum_{x\in V_i}\TV{\mu_{G, x}-\sigma[\nix|V_i]}&<2\eps''n\qquad\mbox{for all }j\in J,\ \sigma\in S_j.
	\end{align}
Analogously, we obtain from \Lem~\ref{Lemma_switch} that
	\begin{align}\label{eqLemma_cavityRS4a}
	\sum_{i=1}^K\sum_{x\in V_i}\TV{\mu_{G^+, x}[\nix|S_j]-\sigma[\nix|V_i]}&<\eps''n\qquad\mbox{for all }j\in J,\ \sigma\in S_j.
	\end{align}
Combining (\ref{eqLemma_cavityRS4}) and (\ref{eqLemma_cavityRS4a}) and using the triangle inequality, we obtain
	\begin{align}\label{eqLemma_cavityRS5}
	\sum_{x\in V(G)}\TV{\mu_{G, x}-\mu_{G^+, x}[\nix|S_j]}
		&\leq3\eps''n\qquad\mbox{for all }j\in J.
	\end{align}
Moreover, combining (\ref{eqLemma_cavityRS2}) and (\ref{eqLemma_cavityRS5}) and applying the triangle inequality once more, we find
	\begin{align*}
	\sum_{x\in V}\TV{\mu_{G, x}-\mu_{G^+, x}}&\leq
		2\eps n+\sum_{j\in J}\sum_{i=1}^K\sum_{x\in V_i}\mu_{G^+}(S_j)\TV{\mu_{G, x}-\mu_{G^+, x}[\nix|S_j]}<4\eps''n,
	\end{align*}
whence (\ref{eqLemma_cavityRS}) follows.
Finally,  let $\bar\mu_i=\bck{\SIGMA[\nix|V_i]}_{\mu_{G^+}}$.
Then
	\begin{align*}
	\sum_{i=1}^K|V_i|\bck{\TV{\SIGMA[\nix|V_i]-\bar\mu_i}}_{\mu_{G^+}}
		&\leq2\eps n+\sum_{j\in J}\mu_{G^+}(S_j)\sum_{i=1}^K|V_i|\bck{\TV{\SIGMA[\nix|V_i]-\bar\mu_i}}_{\mu_{G^+}[\nix|S_j]}
			&[\mbox{due to~(\ref{eqLemma_cavityRS2})}]\\
		&\leq4\eps n+\sum_{j\in J}\mu_{G^+}(S_j)\sum_{i=1}^K|V_i|\TV{\bck{\SIGMA[\nix|V_i]}_{\mu_{G^+}[\nix|S_j]}-\bar\mu_i}
			&[\mbox{by~{\bf HM2}}]\\
		&\leq4\eps n+\sum_{j\in J}\mu_{G^+}(S_j)\sum_{x\in V(G)}\TV{\mu_{G^+,x}[\nix|S_j]-\bar\mu_i}
			&[\mbox{$\triangle$-inequality}]\\
		&\leq4\eps n+\sum_{j\in J}\mu_{G^+}(S_j)\sum_{x\in V(G)}\TV{\mu_{G^+,x}[\nix|S_j]-\mu_{G,x}}+
			\TV{\bar\mu_i-\mu_{G,x}}\\
		&\leq4\eps'' n+\sum_{j\in J}\mu_{G^+}(S_j)\sum_{x\in V(G)}\TV{\bar\mu_i-\mu_{G,x}}
			&[\mbox{by (\ref{eqLemma_cavityRS5})}]\\
		&\leq7\eps'' n.
		&[\mbox{by ~(\ref{eqLemma_cavityRS2}), (\ref{eqLemma_cavityRS4})}]
	\end{align*}
Thus, {\bf HM4} and \Lem~\ref{Lemma_regularSymmetric} imply that $\mu_{G^+}$ is $(\alpha,2)$-symmetric, provided that $\eps''$
was chosen small enough.
\end{proof}

\noindent
We proceed to prove \Thm~\ref{Thm_RSBP}.
Fix $\eps>0$, choose $L=L(\eps)>0$ and $\gamma=\gamma(\eps,L,\Psi)>\eta=\eta(\gamma)>\delta=\delta(\eta)>0$ small enough and assume
that $n>n_0(\delta)$ is sufficiently large.
Because the distribution of the random factor graph $\G_n$ is symmetric under permutations of the variable nodes,
it suffices to prove that with probability at least $1-\eps$ we have
	\begin{align}\label{eqProof_Thm_RSBP1}
	\sum_{a\in\partial x_n, \sigma\in\Omega}\abs{\mu_{\G_n,x_n\to a}(\sigma)-
			\frac{\prod_{b\in\partial x\setminus a}\mu_{\G_n,b\to x_n}(\sigma)}
				{\sum_{\tau\in\Omega}\prod_{b\in\partial x_n\setminus a}\mu_{\G_n,b\to x_n}(\tau)}}&<\eps\qquad\mbox{and}\\
	\sum_{a\in\partial x_n, \sigma\in\Omega}
				\abs{\mu_{\G_n,a\to x_n}(\sigma)-
			\frac{\sum_{\tau\in\Omega^{\partial a}}\vecone\{\tau(x_n)=\sigma\}\psi_a(\tau)\prod_{y\in\partial a\setminus x_n}
							\mu_{\G_n,y\to a}(\tau(y))}
				{\sum_{\tau\in\Omega^{\partial a}}\psi_a(\tau)\prod_{y\in\partial a\setminus x_n}\mu_{\G_n,y\to a}(\tau(y))}}&<\eps.
					\label{eqProof_Thm_RSBP2}
	\end{align}

To prove (\ref{eqProof_Thm_RSBP1})--(\ref{eqProof_Thm_RSBP2}) we use the following standard trick.
Let $\G'$ be the random factor graph with variable nodes $x_1,\ldots,x_n$ comprising of $m'=\Po(dn(1-1/n)^k/k)$ random constraint nodes
	$a_1,\ldots,a_{m'}$ that do not contain $x_n$.
Moreover, let $\Delta=\Po(dn(1-(1-1/n)^k)/k)$ be independent of $m'$ and obtain $\G''$ from $\G'$ by adding independent random constraint nodes
$b_1,\ldots,b_\Delta$ with $x_n\in\partial b_i$ for all $i\in[\Delta]$.
Then the random factor graph $\G''$ has precisely the same distribution as $\G_n$.
Therefore, it suffices to verify (\ref{eqProof_Thm_RSBP1})--(\ref{eqProof_Thm_RSBP2}) with $\G_n$ replaced by $\G''$.

Since $dn(1-(1-1/n)^k)/k=d+o(1)$, we can choose $L=L(\eps)$ so large that
	\begin{align}					\label{eqProof_Thm_RSBP3}
	\pr\brk{\Delta> L}<\eps/3.
	\end{align}
Furthermore, $\G'$ is distributed precisely as the random factor graph $\G_n$ given that $\partial x_n=\emptyset$.
Therefore, Bayes' rule and our assumption (\ref{eqRS}) imply
	\begin{align}\nonumber
	\pr\brk{\G'\mbox{ fails to be $(\delta,2)$-symmetric}}&\leq
		\pr\brk{\G_n\mbox{ fails to be $(\delta,2)$-symmetric}}/\pr\brk{\partial_{\G_n} x_n=\emptyset}\\
		&\leq\exp(d+o(1))\pr\brk{\G_n\mbox{ fails to be $(\delta,2)$-symmetric}}<\delta,
			\label{eqProof_Thm_RSBP4}
	\end{align}
provided that $n_0$ is chosen large enough.
Combining  (\ref{eqProof_Thm_RSBP4}) and \Cor~\ref{Cor_states}, we see that
	\begin{align}\label{eqProof_Thm_RSBP5}
	\pr\brk{\G'\mbox{ is $(\eta,2+(k-1)L)$-symmetric}|\Delta\leq L}&>1-\delta,
	\end{align}
provided $\delta$ is sufficiently small.

Due to (\ref{eqProof_Thm_RSBP3}) and (\ref{eqProof_Thm_RSBP5}) and the symmetry amongst $b_1,\ldots,b_\Delta$ we just need to prove the following:
	given that $\G'$ is $(\eta,2+(k-1)L)$-symmetric and $0<\Delta\leq L$, with probability at least $1-\eps/L$ we have
	\begin{align}\label{eqProof_Thm_RSBP6}
	\sum_{\sigma\in\Omega}\abs{\mu_{\G'',x_n\to b_1}(\sigma)-
			\frac{\prod_{i=2}^\Delta\mu_{\G'',b_i\to x_n}(\sigma)}
				{\sum_{\tau\in\Omega}\prod_{i=2}^\Delta\mu_{\G'',b_i\to x_n}(\tau)}}&<\eps/L\qquad\mbox{and}\\
	\sum_{\sigma\in\Omega}
			\abs{\mu_{\G'',b_1\to x_n}(\sigma)-
			\frac{\sum_{\tau\in\Omega^{\partial b_1}}\vecone\{\tau(x_n)=\sigma\}\psi_{b_1}(\tau)
					\prod_{y\in\partial b_1\setminus x_n}\mu_{\G_n,y\to b_1}(\tau(y))}
				{\sum_{\tau\in\Omega^{\partial b_1}}\psi_a(\tau)\prod_{y\in\partial b_1\setminus x_n}\mu_{\G_n,y\to b_1}(\tau(y))}}&<\eps/L.
					\label{eqProof_Thm_RSBP7}
	\end{align}
To this end, let $U=\bigcup_{j\geq2}\partial b_j$ be the set of all variable nodes that occur in the constraint nodes $b_2,\ldots,b_\Delta$.
Because $\mu_{\G'',x_n\to b_1}$ is the marginal of $x_n$ in the factor graph $\G''-b_1$,
the definition (\ref{eqGibbs}) of the Gibbs measure entails that for any $\sigma\in\Omega$,
	\begin{align}\nonumber
	\mu_{\G'',x_n\to b_1}(\sigma)&=\frac{\sum_{\tau\in\Omega^{V(\G'')}}
			\vecone\{\tau(x_n)=\sigma\}\prod_{a\in F(\G')}\psi_a(\tau(\partial a))\prod_{j=2}^\Delta\psi_{b_j}(\tau(\partial b_j))
			}{\sum_{\tau\in\Omega^{V(\G'')}}\prod_{a\in F(\G')}\psi_a(\tau(\partial a))\prod_{j=2}^\Delta\psi_{b_j}(\tau(\partial b_j))}
		\\
		&=\frac{\sum_{\tau\in\Omega^U}\vecone\{\tau(x_n)=\sigma\}
			\bck{\vecone\{\forall y\in U\setminus\{x_n\}:\SIGMA(y)=\tau(y)}_{\mu_{\G'}}\prod_{j=2}^\Delta\psi_{b_j}(\tau(\partial b_j))}
			{\sum_{\tau\in\Omega^U}	\bck{\vecone\{\forall y\in U\setminus\{x_n\}:\SIGMA(y)=\tau(y)}_{\mu_{\G'}}
				\prod_{j=2}^\Delta\psi_{b_j}(\tau(\partial b_j))}.
										\label{eqProof_Thm_RSBP8}
	\end{align}
Similarly, because $\mu_{\G'',b_i\to x_n}$ is the marginal of $x_n$ in $\G'+b_i$, we have
	\begin{align}										\label{eqProof_Thm_RSBP8a}
	\mu_{\G'',b_i\to x_n}(\sigma)&=\frac{\sum_{\tau\in\Omega^{\partial b_i}}\vecone\{\tau(x_n)=\sigma\}
			\bck{\vecone\{\forall y\in \partial b_i\setminus\{x_n\}:\SIGMA(y)=\tau(y)}_{\mu_{\G'}}\psi_{b_i}(\tau)}
			{\sum_{\tau\in\Omega^{\partial b_i}}
			\bck{\vecone\{\forall y\in \partial b_i\setminus\{x_n\}:\SIGMA(y)=\tau(y)}_{\mu_{\G'}}\psi_{b_i}(\tau)}.
	\end{align}

To prove (\ref{eqProof_Thm_RSBP6}), recall that the variable nodes $\partial b_j\setminus x_n$ are chosen uniformly and independently for each $j\geq2$.
Therefore, if $\G'$ is $(\eta,(k-1)L)$-symmetric and $0<\Delta\leq L$, then
	\begin{align*}
	\sum_{\tau\in\Omega^U}\Erw\brk{
		\abs{\textstyle\bck{\vecone\{\forall y\in U\setminus\{x_n\}:\SIGMA(y)=\tau(y)}_{\mu_{\G'}}-\prod_{y\in U}\mu_{\G',y}(\tau(y))}\big|\G'}
	\leq2\eta.
	\end{align*}
Hence, by Markov's inequality with probability at least $1-\eta^{1/3}$ we have
	\begin{align}
	\sum_{\tau\in\Omega^U}
		\abs{\textstyle\bck{\vecone\{\forall y\in U\setminus\{x_n\}:\SIGMA(y)=\tau(y)}_{\mu_{\G'}}-\prod_{y\in U}\mu_{\G',y}(\tau(y))}&<\eta^{1/3}.
								\label{eqProof_Thm_RSBP9}
	\end{align}
Set
	\begin{align}\label{eqnuMessages}
	\nu_i(\sigma)&=\sum_{\tau\in\Omega^{\partial b_i}}\vecone\{\tau(x_n)=\sigma\}\psi_{b_i}(\tau)
		\prod_{y\in\partial b_i\setminus x_n}\mu_{\G',y}(\tau(y)).
	\end{align}
\Whp\ for any $1\leq i<j\leq\Delta$ we have $\partial b_i\cap\partial b_j=\{x_n\}$.
Hence, assuming that $\eta=\eta(\gamma)>0$ is chosen small enough, we obtain from (\ref{eqProof_Thm_RSBP8}), 
	(\ref{eqProof_Thm_RSBP8a}), (\ref{eqProof_Thm_RSBP9})  that
with probability at least $1-\gamma$,
	\begin{align}								\label{eqProof_Thm_RSBP10}
	\abs{\mu_{\G'',x_n\to b_1}(\sigma)-\frac{\prod_{i=2}^\Delta\nu_i(\sigma)}{\sum_{\tau\in\Omega}\prod_{i=2}^\Delta\nu_i(\tau)}}<\gamma
		\qquad\mbox{and}\qquad
	\abs{\mu_{\G'',b_i\to x_n}(\sigma)-\frac{\nu_i(\sigma)}{\sum_{\tau\in\Omega}\nu_i(\tau)}}&<\gamma
	\qquad\mbox{for all }i\in[\Delta].
	\end{align}
Hence, (\ref{eqProof_Thm_RSBP6}) follows from (\ref{eqProof_Thm_RSBP10}), provided that $\gamma$ is chosen small enough.

Finally, to prove  (\ref{eqProof_Thm_RSBP7}) we use \Lem~\ref{Lemma_cavityRS}.
Let $\G'''=\G''-b_1$ be the graph obtained from $\G'$ by merely adding $b_2,\ldots,b_\Delta$.
Given that $\G'$ is $(\eta,2)$-symmetric, \Lem~\ref{Lemma_cavityRS} and \Cor~\ref{Cor_states} imply that
 $\G'''$ is $(\gamma^3,k-1)$-symmetric. 
As $\partial b_1\setminus x_n$ is a random subset of size at most $k-1$ chosen independently of $b_2,\ldots,b_\Delta$,
	we conclude that with probability at least $1-\gamma$ over the choice of $\G''$,
	\begin{align}\nonumber
	2\gamma&>\sum_{\tau\in\Omega^{\partial b_1}}\abs{\bck{\vecone\{\forall y\in\partial b_1\setminus x_n:\SIGMA(y)=\tau(y)\}}_{\mu_{\G'''}}
		-\prod_{y\in\partial b_1\setminus x_n}\mu_{\G''',y}(\tau(y))}\\
		&=
		\sum_{\tau\in\Omega^{\partial b_1}}\abs{\bck{\vecone\{\forall y\in\partial b_1\setminus x_n:\SIGMA(y)=\tau(y)\}}_{\mu_{\G'''}}
		-\prod_{y\in\partial b_1\setminus x_n}\mu_{\G'',y\to b_1}(\tau(y))}.
									\label{eqProof_Thm_RSBP11}
	\end{align}
Moreover, (\ref{eqLemma_cavityRS}) implies that with probability at least $1-\gamma$,
	\begin{align}								\label{eqProof_Thm_RSBP12}
	2\gamma&>\sum_{\tau\in\Omega^{\partial b_1}}\abs{\bck{\vecone\{\forall y\in\partial b_1\setminus x_n:\SIGMA(y)=\tau(y)\}}_{\mu_{\G'''}}
		-\prod_{y\in\partial b_1\setminus x_n}\mu_{\G',y}(\tau(y))}.
	\end{align}
Finally,  (\ref{eqProof_Thm_RSBP7}) follows from
	(\ref{eqnuMessages})--(\ref{eqProof_Thm_RSBP12}), provided $\gamma$ is chosen small enough.

\subsection{Proof of \Cor~\ref{Thm_RSBethe}}
Following Aizenman-Sims-Starr~\cite{Aizenman} we are going to show that
	\begin{align}\label{eqAizenman}
	\lim_{n\to\infty}\Erw\brk{\ln\frac{Z_{\G_n}}{Z_{\G_{n-1}}}}&= B.
	\end{align}
The assertion then follows by summing on $n$.
To prove (\ref{eqAizenman}) we will couple the random variables $Z_{\G_{n-1}},Z_{\G_n}$
by way of a third random factor graph $\hat\G$; a similar coupling was used in~\cite{COPS}.
Specifically, let $\hat\G$ be the random factor graph with variable nodes $V(\hat\G)=\{x_1,\ldots,x_n\}$ obtained by including 
	$\hat m=\Po(n\hat d/k)$ independent random constraint nodes,
where
	$$\hat d=d\bcfr n{n-1}^{k-1}.$$
For each constraint node $a$ of $\hat\G$ the weight function $\psi_a$ is chosen from the distribution $\rho$ independently.
Further, set $p=((n-1)/n)^{k-1}$ and let $\G'$ be a random graph obtained from $\hat\G$ by deleting each constraint node
with probability $1-p$ independently.
Let $A$ be the (random) set of constraints removed from $\hat\G$ to obtain $\G'$.
In addition, obtain $\G''$ from $\hat\G$ by selecting a variable node $\vec x$ uniformly at random and removing all constraints $a\in\partial_{\hat\G}\vec x$
along with $\vec x$ itself.
Then $\G'$ is distributed as $\G_n$ and $\G''$ is distributed as $\G_{n-1}$ plus an isolated variable.
Thus,
	\begin{align}\label{eqCoupling}
	Z_{\G_n}&\stacksign{$d$}=Z_{\G'},
		&Z_{\G_{n-1}}&\stacksign{$d$}=Z_{\G''}.
	\end{align}

\begin{fact}\label{Claim_contigPoisson}
The two factor graph distributions $\hat\G,\G_n$ have total variation distance $O(1/n)$.
\end{fact}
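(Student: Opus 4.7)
The plan is to reduce the TV distance between the two factor graph distributions to the TV distance between their Poisson-distributed constraint counts, and then control the latter via the explicit form of the mean gap.

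First I would argue that conditionally on the total number of constraints being some $j$, the distributions of $\hat\G$ and $\G_n$ coincide, since both are then built from $j$ i.i.d.\ constraints, each a uniform $k$-tuple of variable nodes in $\{x_1,\ldots,x_n\}^k$ equipped with an independent $\rho$-random weight function. Therefore, for any labeled factor graph $G$ with $j$ constraints, the Radon--Nikodym derivative of $\hat\G$ with respect to $\G_n$ at $G$ depends only on $j$; it equals $\pr[\hat m=j]/\pr[m=j]$. Summing $|\pr[\hat\G=G]-\pr[\G_n=G]|$ over $G$ and using $\sum_G \pr[\G_n=G\mid m=j]=1$ yields
\[
d_{TV}(\hat\G,\G_n)=d_{TV}\bigl(\Po(n\hat d/k),\,\Po(nd/k)\bigr).
\]

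Second, I would make the mean gap explicit. From $(n/(n-1))^{k-1}=1+(k-1)/(n-1)+O(1/n^{2})$ and the definition $\hat d = d(n/(n-1))^{k-1}$, multiplying by $n/k$ gives
\[
\Big|\tfrac{n\hat d}{k}-\tfrac{nd}{k}\Big|=\tfrac{d(k-1)}{k}\cdot\tfrac{n}{n-1}+O(1/n),
\]
which is bounded by a constant $c=c(d,k)$ independent of $n$.

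Third, I would invoke a Poisson-to-Poisson TV estimate tailored to the regime where the two means have a bounded gap at common scale $\lambda=\Theta(n)$. The ratio $\pr[\Po(\lambda+c)=j]/\pr[\Po(\lambda)=j]=e^{-c}(1+c/\lambda)^{j}$ is, for $j$ in the Gaussian window $j=\lambda+O(\sqrt\lambda)$, equal to $1+O(1/\sqrt\lambda)$. Summing pointwise differences over this window, and adding the standard Chernoff-type Poisson tail (which is $\exp(-\Omega(n))$, hence negligible), yields the quantitative bound needed to conclude.

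The main obstacle is the sharpness of this Poisson comparison: the trivial coupling bound $d_{TV}(\Po(\lambda),\Po(\mu))\le|\lambda-\mu|$ is insensitive to the common scale $\lambda$ and is therefore insufficient, so extracting the correct $n$-dependence requires the local-limit calculation sketched above. Once that estimate is in place, the first step immediately transfers the bound from $\Po(n\hat d/k)$ versus $\Po(nd/k)$ to $\hat\G$ versus $\G_n$, completing the argument.
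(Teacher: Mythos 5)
Your first step---conditioning on the number of constraints to reduce the problem to the total variation distance between $\Po(n\hat d/k)$ and $\Po(nd/k)$---is exactly the paper's one-line argument. Your second step is in fact more careful than the paper's and makes explicit that the mean gap $n\hat d/k-nd/k=d(k-1)/k+O(1/n)$ is a \emph{nonvanishing constant}. But this forces a conclusion you did not draw: the claimed $O(1/n)$ rate is unattainable. For Poisson laws with means $\lambda$ and $\lambda+c$, with $\lambda=\Theta(n)$ and $c>0$ constant, the total variation distance is $\Theta(c/\sqrt\lambda)=\Theta(1/\sqrt n)$. The upper bound follows from your local-limit calculation, or from the standard estimate $d_{\mathrm{TV}}(\Po(\lambda),\Po(\mu))\le\sqrt{2/\mathrm{e}}\,|\sqrt\lambda-\sqrt\mu|$; a matching lower bound holds because the two distribution functions differ by $\Theta(c/\sqrt\lambda)$ near the common median. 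So the true rate is $\Theta(1/\sqrt n)$, and the fact as printed overstates it by a factor of $\sqrt n$. This is harmless in context---the fact is only invoked to carry asymptotically-almost-sure events over from $\G_n$ to $\hat\G$, for which any $o(1)$ bound suffices---but your write-up should flag the discrepancy rather than asserting the sketch ``yields the quantitative bound needed.'' One small technical repair: the Poisson tail outside the window $j=\lambda+O(\sqrt\lambda)$ is $\exp(-\Omega(1))$, not $\exp(-\Omega(n))$; widen the window to $\Theta(\sqrt{\lambda\log\lambda})$, or use the exact total variation expression, and the $\Theta(1/\sqrt n)$ bound still goes through.
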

\begin{proof}
Given that $|F(\hat\G)|=|F(\G_n)|$ both factor graphs are identically distributed.
Moreover, $|F(\G_n)|$ is Poisson with mean $dn/k$, which has total variation distance $O(1/n)$ from the distribution of $\hat m$.
\end{proof}

\noindent
For $x\in V(\hat\G)$, $b\in F(\hat\G)$ we define
	\begin{align}\label{eqS1}
	S_1(x)&=\ln\brk{\sum_{\sigma\in\Omega}\prod_{a\in\partial_{\hat\G} x}\mu_{\hat\G,a\to x}(\sigma)},&
	S_2(x)&=\sum_{a\in\partial_{\hat\G} x}
				\ln\brk{\sum_{\tau\in\Omega^{\partial a}}\psi_a(\tau)\prod_{y\in\partial a}\mu_{\hat\G,y\to a}(\tau(y))},\\
	S_3(x)&=-\sum_{a\in\partial_{\hat\G} x}\ln\brk{\sum_{\tau\in\Omega}\mu_{\hat\G, x\to a}(\tau)\mu_{\hat\G,a\to x}(\tau)},&
	S_4(b)&=\ln\brk{\sum_{\sigma\in\Omega^{\partial b}}\psi_b(\sigma)\prod_{y\in\partial b}
		\mu_{\hat\G,y\to b}(\sigma(y))}.
			\label{eqS4}
	\end{align}

\begin{lemma}\label{Claim_eqRSBethe1}
\Whp\ we have	$\ln\frac{Z_{\hat\G}}{Z_{\G'}}=o(1)+\sum_{a\in A}S_4(a).$
\end{lemma}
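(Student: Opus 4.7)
The plan is to start from the elementary identity
\begin{align*}
\frac{Z_{\hat\G}}{Z_{\G'}}=\bck{\prod_{a\in A}\psi_a(\SIGMA(\partial a))}_{\mu_{\G'}}=\sum_{\tau\in\Omega^U}\mu_{\G',U}(\tau)\prod_{a\in A}\psi_a(\tau(\partial a)),
\end{align*}
where $U=\bigcup_{a\in A}\partial a$ is the set of variable nodes touched by the removed constraints. Since $|A|\sim\Bin(\hat m,1-p)$ with $1-p=O(1/n)$ and $\Erw\hat m=dn/k+O(1)$, the mean $\Erw|A|=O(1)$ is bounded, so there is $L=L(\eps)$ with $\pr[|A|>L]<\eps$. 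Conditioning on $|A|\leq L$, the sets $\partial a$, $a\in A$, are at most $L$ independent uniformly random $k$-tuples, so \whp\ they are pairwise disjoint and $|U|\leq kL$.

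The next step is to factorize the joint marginal $\mu_{\G',U}$. Since $\G'$ is distributed as $\G_n$, the hypothesis~(\ref{eqRS}) together with \Cor~\ref{Cor_states} shows that $\mu_{\G'}$ is \whp\ $(\gamma,kL)$-symmetric for any prescribed $\gamma>0$. Combined with the disjointness of the neighborhoods $\partial a$, this yields
\begin{align*}
\sum_{\tau\in\Omega^U}\abs{\mu_{\G',U}(\tau)-\prod_{y\in U}\mu_{\G',y}(\tau(y))}<\gamma,
\end{align*}
and since the functions in $\Psi$ are uniformly bounded above and away from $0$, substitution delivers
\begin{align*}
\frac{Z_{\hat\G}}{Z_{\G'}}=(1+o(1))\prod_{a\in A}\sum_{\tau\in\Omega^{\partial a}}\psi_a(\tau)\prod_{y\in\partial a}\mu_{\G',y}(\tau(y)).
\end{align*}

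It then remains to match the right-hand side with $\exp(\sum_{a\in A}S_4(a))$, i.e.\ to replace $\mu_{\G',y}$ by the cavity marginal $\mu_{\hat\G,y\to a}$ defined from $\hat\G-a$. By construction $\hat\G-a$ is obtained from $\G'=\hat\G-A$ by reinstating the $|A|-1\leq L-1$ constraints in $A\setminus\{a\}$, so \Lem~\ref{Lemma_cavityRS} applied with $G=\G'$ and $G^+=\hat\G-a$ gives $\sum_{y\in V(\G')}\TV{\mu_{\G',y}-\mu_{\hat\G,y\to a}}<\alpha n$ \whp\ for each of the at most $L$ constraints $a\in A$. Because $\partial a$ is an independent uniformly random $k$-tuple, a Markov-type argument converts this averaged bound into a pointwise $o(1)$ estimate on $\TV{\mu_{\G',y}-\mu_{\hat\G,y\to a}}$ valid for every $y\in\partial a$ and every $a\in A$ simultaneously; substituting and taking the logarithm completes the proof.

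The main obstacle is the passage from the averaged total-variation bounds supplied by \Lem~\ref{Lemma_cavityRS} and by $(\gamma,kL)$-symmetry, both of which are stated as sums over all variable nodes, to the pointwise estimates required at the specific random vertices populating the $\partial a$'s. Independence of the $k$-tuples $\partial a$ from the rest of $\hat\G$ makes this tractable via conditioning and Markov's inequality, but one must check that conditioning on $|A|\leq L$ and on the realizations of $\partial a$ for $a\in A$ does not destroy the $(\gamma,2)$-symmetry of $\G'$ needed to invoke \Lem~\ref{Lemma_cavityRS}, and that the denominators $\sum_\tau\psi_a(\tau)\prod_y\mu_{\G',y}(\tau(y))$ are uniformly bounded away from $0$ so that the logarithm is continuous at the replacement step.
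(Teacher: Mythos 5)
Your argument is correct, and it reaches the same conclusion by a slightly different route. The paper telescopes the ratio: it writes $\hat\G$ as $\G'$ with $X=|A|$ constraints added one at a time, $\G'=\G'_0\subset\G'_1\subset\cdots\subset\G'_X=\hat\G$, controls each factor $Z_{\G'_i}/Z_{\G'_{i-1}}$ as a Gibbs expectation of a single weight function under $\mu_{\G'_{i-1}}$, and repeatedly invokes \Lem~\ref{Lemma_cavityRS} and \Cor~\ref{Cor_states} to propagate $(o(1),2)$- and $(o(1),k)$-symmetry along the chain of intermediate graphs $\G'_{i-1}$. You instead compute $Z_{\hat\G}/Z_{\G'}$ in one shot as $\bck{\prod_{a\in A}\psi_a(\SIGMA(\partial a))}_{\mu_{\G'}}$ and factorize the joint marginal $\mu_{\G',U}$ of all touched variables simultaneously, invoking $(\gamma,kL)$-symmetry of $\mu_{\G'}$ only. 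This buys you the convenience of never having to verify symmetry of the intermediate graphs, at the modest cost of needing whp pairwise disjointness of the neighborhoods $\partial a$, $a\in A$ (which the telescoping argument does not require explicitly, since it processes one constraint at a time) and of appealing to higher-order symmetry ($kL$ coordinates rather than $k$). Both arguments then rely on \Lem~\ref{Lemma_cavityRS} in essentially the same way to pass from $\mu_{\G',y}$ (resp.\ $\mu_{\G'_{i-1},y}$) to the cavity marginals $\mu_{\hat\G,y\to a}$ appearing in $S_4(a)$.

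The two caveats you flag at the end are both non-issues and could be dispatched in a line each. First, the set $A$ is determined by independent coin flips on the constraint indices, and the neighborhoods $\partial a$ for $a\in A$ are drawn independently of everything else; consequently conditioning on $|A|\le L$, on $A$, or on $(\partial a)_{a\in A}$ leaves the distribution of $\G'$ untouched, so the $(\gamma,2)$-symmetry inherited from~(\ref{eqRS}) survives the conditioning. Second, the quantity $\sum_\tau\psi_a(\tau)\prod_{y}\mu_{\G',y}(\tau(y))$ is at least $\min_{\psi\in\Psi}\min_{\sigma\in\Omega^k}\psi(\sigma)>0$, a positive constant since $\Psi$ is finite and all $\psi\in\Psi$ are strictly positive; this is exactly the observation the paper records as~(\ref{eqLemma_cavityRS0}) and it guarantees the Lipschitz continuity of the logarithm that your final substitution requires.
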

\begin{proof}
Given $\eps>0$ let $L=L(\eps)>0$ be a large enough number, let $\gamma=\gamma(\eps,L,\Psi)>\delta=\delta(\gamma)>0$ be small enough and assume that $n$ is sufficiently large.
Let $X=|A|$.
Then the construction of $\G'$ ensures that 
	\begin{align}\label{eqClaim_eqRSBethe1_1}
	\pr\brk{X>L}<\eps.
	\end{align}

Instead of thinking of $\G'$ as being obtained from $\hat\G$ by removing $X$ random constraints, we can think of $\hat\G$
as being obtained from $\G'$ by adding $X$ independent random constraint nodes $a_1,\ldots,a_X$.
More precisely, let $\G_0'=\G'$ and $\G_i'=\G'_{i-1}+a_i$ for $i\in[X]$.
Then given $X$ the triple $(\G',\hat\G,A)$ has the same distribution as $(\G',\G_X',\{a_1,\ldots,a_X\})$.

Moreover, because $p\hat d n/k=dn/k$, $\G'$ has the same distribution as $\G_n$.
Therefore, our assumption (\ref{eqRS})  implies that $\G'$ is $(o(1),2)$-symmetric \whp\
Hence, \Lem~\ref{Lemma_cavityRS} implies that $\G'_{i-1}$ retains $(o(1),2)$-symmetry \whp\ for any $1\leq i\leq \min\{X,L\}$.
Consequently, \Cor~\ref{Cor_states} implies that
$\G'_{i-1}$ is $(o(1),k)$-symmetric \whp\ 
Since $\partial b_i$ is chosen uniformly and independently of $b_1,\ldots,b_{i-1}$, Markov's inequality thus shows that for every $1\leq i\leq\min\{X,L\}$,
	\begin{align*}
	\pr\brk{\sum_{\tau\in\Omega^k}\abs{\bck{\vecone\{\forall y\in\partial a_i:\SIGMA(y)=\tau(y)\}}_{\mu_{\G'_{i-1}}}-
			\prod_{y\in\partial a_i}\mu_{\G'_{i-1},y}(\tau(y))}\geq\delta }&<\delta,
	\end{align*}
provided $n$ is big enough.
Further, since the constraints $(a_i)_{i\in[X]}$ are chosen independently
and because $\mu_{\hat\G,y\to a_i}(\tau(y))$ is the marginal in the factor graph without $a_i$,
(\ref{eqLemma_cavityRS}) and (\ref{eqClaim_eqRSBethe1_1}) imply that
	\begin{align*}
	\pr\brk{\forall i\in[X]:
		\sum_{\tau\in\Omega^k}\abs{\prod_{y\in\partial a_i}\mu_{\hat\G,y\to a_i}(\tau(y))-
			\prod_{y\in\partial a_i}\mu_{\G'_{i-1},y}(\tau(y))}\geq\delta }&<2\eps.
	\end{align*}
Hence, with probability at least $1-3\eps$ the bound
	\begin{align}\label{eqClaim_eqRSBethe1_2}
	\sum_{\tau\in\Omega^k}\abs{\bck{\vecone\{\forall y\in\partial a_i:\SIGMA(y)=\tau(y)\}}_{\mu_{\G'_{i-1}}}-
			\prod_{y\in\partial a_i}\mu_{\hat\G,y\to a_i}(\tau(y))}<2\delta
	\end{align}
holds for all $i\in[X]$ simultaneously.
Further,  the definition (\ref{eqGibbs}) of the partition function entails that for any $i\in[X]$,
	$$Z_{\G'_i}/Z_{\G'_{i-1}}=\sum_{\sigma\in\Omega^{\partial a_i}}\psi_{a_i}(\sigma)
	\bck{\vecone\{\forall y\in\partial a_i:\SIGMA(y)=\sigma(y)\}}_{\mu_{\G'_{i-1}}}.$$
Thus, if (\ref{eqClaim_eqRSBethe1_2}) holds and if $\delta$ is chosen sufficiently small, then
	\begin{align*}
	\abs{\frac{Z_{\G'_i}}{Z_{\G'_{i-1}}}-\sum_{\sigma\in\Omega^{\partial a_i}}\psi_{a_i}(\sigma)\prod_{y\in\partial a_i}
		\mu_{\hat\G,a_i\to y}(\tau(y))}
		<\gamma.
	\end{align*}
Finally, the assertion follows by taking logarithms and summing over $i=1,\ldots,X$.
\end{proof}

\begin{lemma}\label{Claim_eqRSBethe2a}
Let $U=\bigcup_{a\in\partial_{\hat\G}\vec x}\partial a$.
Then \whp\ we have
	\begin{align*}
	\ln\frac{Z_{\hat\G}}{Z_{\G''}}=o(1)+\ln\sum_{\tau\in\Omega^U}
		\prod_{a\in\partial_{\hat\G}\vec x}\brk{\psi_{a}(\tau(\partial a))\prod_{y\in\partial a\setminus\vec  x}\mu_{\hat\G,y\to a}(\tau(y))}.
	\end{align*}
\end{lemma}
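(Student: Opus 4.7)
The plan is to derive an exact algebraic identity for $Z_{\hat\G}/Z_{\G''}$, approximate the resulting joint distribution by a product of marginals using $(\eps,l)$-symmetry of $\mu_{\G''}$, and finally swap marginals of $\mu_{\G''}$ for the desired messages of $\hat\G$ via \Lem~\ref{Lemma_cavityRS}. Throughout, I condition on the \whp\ events that $d_{\hat\G}(\vec x)\le L$ for a sufficiently large constant $L=L(\eps)$ (using tail bounds for a Poisson with bounded mean), that the constraints in $\partial_{\hat\G}\vec x$ pairwise meet only in $\vec x$, and that $\mu_{\G''}$ is $(\gamma,2+(k-1)L)$-symmetric for some $\gamma=o(1)$; the last holds by hypothesis~(\ref{eqRS}), Fact~\ref{Claim_contigPoisson}, and \Cor~\ref{Cor_states} applied to $\G_{n-1}$ with an appended isolated variable.

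Splitting $V(\hat\G)=V(\G'')\cup\{\vec x\}$ and $F(\hat\G)=F(\G'')\cup\partial_{\hat\G}\vec x$ in the partition function, the factors involving $\vec x$ separate cleanly from those of $\G''$, yielding
$$\frac{Z_{\hat\G}}{Z_{\G''}}=\bck{f(\SIGMA|_{U'})}_{\mu_{\G''}},\qquad f(\eta)=\sum_{\rho\in\Omega}\prod_{a\in\partial_{\hat\G}\vec x}\psi_a\bigl(\eta|_{\partial a\setminus\vec x},\rho\bigr),$$
where $U'=U\setminus\{\vec x\}$, $|U'|\le(k-1)L$, and $f$ takes values in a bounded positive interval depending only on $L$ and $\Psi$. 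Because the tuple indexing $U'$ is (conditionally) a uniformly random $|U'|$-tuple in $V(\G'')$, $(\gamma,2+(k-1)L)$-symmetry combined with Markov's inequality gives $\TV{\mu_{\G'',U'}-\bigotimes_{y\in U'}\mu_{\G'',y}}=o(1)$ \whp; together with the uniform boundedness of $f$ this implies
$$\frac{Z_{\hat\G}}{Z_{\G''}}=o(1)+\sum_{\eta\in\Omega^{U'}}f(\eta)\prod_{y\in U'}\mu_{\G'',y}(\eta(y)).$$

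To replace each $\mu_{\G'',y}$ by $\mu_{\hat\G,y\to a}$, where $a$ is the (\whp\ unique) constraint in $\partial_{\hat\G}\vec x$ containing $y$, note that $\hat\G-a$ arises from $\G''$ by adding back the variable node $\vec x$ and at most $L-1$ constraints; \Lem~\ref{Lemma_cavityRS} therefore gives $\sum_{y\in V(\G'')}\TV{\mu_{\G'',y}-\mu_{\hat\G,y\to a}}=o(n)$ \whp, and a second application of Markov transfers this bound to the $O(1)$ specific pairs $(a,y)$ with $y\in\partial a\setminus\vec x$. Substituting the messages, unpacking $f$, and merging the sum over $\rho$ at $\vec x$ with the sum over $\eta\in\Omega^{U'}$ into a single sum over $\tau\in\Omega^U$ gives the prescribed right-hand side. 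Taking logarithms is valid because positivity of the $\psi_a$ keeps both quantities bounded away from $0$ and $\infty$, so the additive $o(1)$ error in the ratio translates into an additive $o(1)$ error in the log.

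The main obstacle is the transfer from ``average'' bounds, which is all that $(\gamma,l)$-symmetry and \Lem~\ref{Lemma_cavityRS} directly provide, to bounds on the specific variables in $U'$ and the specific pairs $(a,y)$. Both uses exploit that $|U'|$ and $|\partial_{\hat\G}\vec x|$ are $O(1)$ \whp\ and that $U'$ is (essentially) a uniformly random subset of $V(\G'')$, which allows Markov's inequality to turn the factor of $n$ on the right-hand side of the average bound into an $o(1)$ failure probability.
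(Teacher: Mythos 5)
Your proof is correct and follows essentially the same approach as the paper's: derive the exact Aizenman--Sims--Starr identity $Z_{\hat\G}/Z_{\G''}=\bck{f(\SIGMA|_{U'})}_{\mu_{\G''}}$ (which is (\ref{eqClaim_eqRSBethe2_3}) rewritten), approximate the joint Gibbs distribution on $U'$ by the product of marginals via $(\gamma,\,l)$-symmetry plus Markov, swap $\mu_{\G'',y}$ for the messages $\mu_{\hat\G,y\to a}$ via \Lem~\ref{Lemma_cavityRS}, and take logarithms. The paper compresses the second and third steps into the single bound (\ref{eqClaim_eqRSBethe2_2}); you make them explicit, which is a fine stylistic choice. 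One small imprecision: you refer to the event $\{d_{\hat\G}(\vec x)\le L\}$ as holding \whp, but for a fixed constant $L$ it holds only with probability $1-O(\eps)$; the correct framing (which the paper uses and you are implicitly using anyway) is to prove the estimate with failure probability $O(\eps)$ for each $\eps>0$ and then conclude the $o(1)$ statement. Also, your application of \Lem~\ref{Lemma_cavityRS} to compare $\mu_{\G''}$ with $\mu_{\hat\G-a}$ involves adding a variable node in addition to constraints, which the lemma as stated does not cover; it goes through because appending an isolated variable leaves the marginals on the remaining nodes and the symmetry property untouched, but it would be worth noting that one is really applying the lemma to $\G''$ augmented by the isolated $\vec x$.
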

\begin{proof}
Given $\eps>0$ let $L=L(\eps)>0$ be a large enough, let $\gamma=\gamma(\eps,L)>\delta=\delta(\gamma)>0$ be small enough and assume that $n$ is sufficiently large.
Letting $X=|\partial_{\hat\G}\vec x|$, we can pick $L$ large enough so that
	\begin{align}\label{eqClaim_eqRSBethe2_1}
	\pr\brk{X>L}<\eps.
	\end{align}
As in the previous proof, we turn the tables: we think of $\hat\G$  as being obtained from $\G''$ by adding a new variable node $\vec x$ and $X$ independent random constraint nodes $a_1,\ldots,a_X$ such that $x\in\partial a_i$ for all $i$.

The assumption (\ref{eqRS}), \Lem~\ref{Lemma_cavityRS} and \Cor~\ref{Cor_states} imply that
	\begin{align}\label{eqClaim_eqRSBethe2_2}
	\pr\brk{\sum_{\tau\in\Omega^{U\setminus\{\vec x\}}}\abs{\bck{\vecone\{\forall y\in U\setminus\{\vec x\}:\SIGMA(y)=\tau(y)\}}_{\G''}-
			\prod_{i=1}^X\prod_{y\in\partial a_i\setminus\vec  x}\mu_{\hat\G,y\to a_i}(\tau(y))}\geq\delta\bigg|X\leq L}&=o(1).
	\end{align}
Furthermore, (\ref{eqGibbs}) yields
	\begin{align*}
	\frac{Z_{\hat\G}}{Z_{\G''}}&=\sum_{\tau\in\Omega^U}\bck{\vecone\{\forall y\in U\setminus\{\vec x\}:\SIGMA(y)=\tau(y)\}}_{\G''}
		\prod_{i=1}^X\psi_{a_i}(\tau(\partial a_i)).
	\end{align*}
Hence, (\ref{eqClaim_eqRSBethe2_1}) and (\ref{eqClaim_eqRSBethe2_2}) show that with probability at least $1-2\eps$, 
	\begin{align}\label{eqClaim_eqRSBethe2_3}
	\abs{\frac{Z_{\hat\G}}{Z_{\G''}}-\sum_{\tau\in\Omega^U}
		\prod_{i=1}^X\brk{\psi_{a_i}(\tau(\partial a_i))\prod_{y\in\partial a_i\setminus\vec  x}\mu_{\hat\G,y\to a_i}(\tau(y))}
		}&<\gamma.
	\end{align}
The assertion follows by taking logarithms.
\end{proof}

\begin{corollary}\label{Claim_eqRSBethe2}
\Whp\ we have $\ln\frac{Z_{\hat\G}}{Z_{\G''}}=S_1(\vec x)+S_2(\vec x)+S_3(\vec x)+o(1).$
\end{corollary}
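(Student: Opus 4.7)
The plan is to start from \Lem~\ref{Claim_eqRSBethe2a} and massage its logarithmic expression into $S_1(\vec x)+S_2(\vec x)+S_3(\vec x)$ using the Belief Propagation identities established in \Thm~\ref{Thm_RSBP}. First, I observe that $\hat\G$ and $\G_n$ are mutually contiguous by \Fact~\ref{Claim_contigPoisson}, so \Thm~\ref{Thm_RSBP} applies: the messages $\mu_{\hat\G,\nix\to\nix}$ form a $\delta_n$-BP fixed point \whp\ with $\delta_n\to 0$. Since $\vec x$ is chosen uniformly at random and the total BP violation is bounded by $\delta_n n$, Markov's inequality implies that \whp\ the violation summed over the (constantly many) edges incident to $\vec x$ is $o(1)$.

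Next, I condition on the high-probability event that $|\partial_{\hat\G}\vec x|\leq L$ for some large $L=L(\eps)$ (capturing almost all mass of the $\Po(\hat d)$ degree) and that the sets $(\partial a\setminus\{\vec x\})_{a\in\partial_{\hat\G}\vec x}$ are pairwise disjoint --- the latter holds \whp\ because $\vec x$ has bounded degree and the remaining $k-1$ neighbors of each incident constraint are sampled uniformly and independently. Under disjointness, the sum inside the log of \Lem~\ref{Claim_eqRSBethe2a} factors as
\begin{align*}
\sum_{\tau\in\Omega^U}\prod_{a\in\partial_{\hat\G}\vec x}\psi_a(\tau(\partial a))\prod_{y\in\partial a\setminus\vec x}\mu_{\hat\G,y\to a}(\tau(y))
=\sum_{\sigma\in\Omega}\prod_{a\in\partial_{\hat\G}\vec x}Q_a(\sigma),
\end{align*}
where $Q_a(\sigma)=\sum_{\tau\in\Omega^{\partial a},\,\tau(\vec x)=\sigma}\psi_a(\tau)\prod_{y\in\partial a\setminus\vec x}\mu_{\hat\G,y\to a}(\tau(y))$ and $R_a=\sum_{\sigma\in\Omega}Q_a(\sigma)$.

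The constraint-side BP equation for $\mu_{\hat\G,a\to\vec x}$, applicable up to summable error by the Markov step above, asserts $Q_a(\sigma)=R_a\,\mu_{\hat\G,a\to\vec x}(\sigma)+o(1)$. Substituting and taking logarithms gives $\ln\sum_\sigma\prod_a Q_a(\sigma)=\sum_{a\in\partial_{\hat\G}\vec x}\ln R_a+\ln\sum_\sigma\prod_{a\in\partial_{\hat\G}\vec x}\mu_{\hat\G,a\to\vec x}(\sigma)+o(1)$; the last summand is exactly $S_1(\vec x)$. Finally, each $\ln R_a$ is rewritten using the variable-side BP equation: $\sum_{\tau\in\Omega^{\partial a}}\psi_a(\tau)\prod_{y\in\partial a}\mu_{\hat\G,y\to a}(\tau(y))=\sum_\sigma\mu_{\hat\G,\vec x\to a}(\sigma)\,Q_a(\sigma)=R_a\sum_\sigma\mu_{\hat\G,\vec x\to a}(\sigma)\mu_{\hat\G,a\to\vec x}(\sigma)+o(1)$, so $\ln R_a$ equals $\ln[\sum_\tau\psi_a(\tau)\prod_{y\in\partial a}\mu_{\hat\G,y\to a}(\tau(y))]-\ln[\sum_\sigma\mu_{\hat\G,\vec x\to a}(\sigma)\mu_{\hat\G,a\to\vec x}(\sigma)]+o(1)$. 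Summing over $a\in\partial_{\hat\G}\vec x$ yields $S_2(\vec x)+S_3(\vec x)$, completing the identification.

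The main obstacle is keeping the error terms controlled as they propagate through factoring and logarithms. This is manageable because strict positivity of the functions in $\Psi$ gives uniform two-sided bounds on each $\psi_a$, and hence on the messages and on each $R_a$ (bounded away from $0$ and $\infty$), so additive $o(1)$ errors inside the logarithms survive as $o(1)$ after taking logs. Since $|\partial_{\hat\G}\vec x|\leq L$ on the conditioning event and the BP violations summed over $a\in\partial_{\hat\G}\vec x$ are $o(1)$ \whp, only boundedly many such approximations are accumulated, so the net error remains $o(1)$ \whp.
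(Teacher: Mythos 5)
Your proof is correct and follows essentially the same route as the paper: factor the sum from \Lem~\ref{Claim_eqRSBethe2a} using pairwise disjointness of the $\partial a\setminus\{\vec x\}$, invoke contiguity plus \Thm~\ref{Thm_RSBP} (via Markov and the uniform choice of $\vec x$) to replace $Q_a(\sigma)/R_a$ by $\mu_{\hat\G,a\to\vec x}(\sigma)$, and re-express each $\ln R_a$ through the identity $\sum_\sigma\mu_{\hat\G,\vec x\to a}(\sigma)Q_a(\sigma)=\sum_\tau\psi_a(\tau)\prod_{y\in\partial a}\mu_{\hat\G,y\to a}(\tau(y))$. One small mislabel: the rewriting of $\ln R_a$ at the end again uses the constraint-to-variable BP equation ($Q_a=R_a\mu_{\hat\G,a\to\vec x}+o(1)$) rather than the variable-side equation, but the displayed manipulation is correct as written.
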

\begin{proof}
Let $a_1,\ldots,a_X$ be the constraint nodes adjacent to $\vec x$ and let $U=\bigcup_{i=1}^X\partial_{\hat\G} a_i$.
With probability $1-O(1/n)$ for all $1\leq i<j\leq X$ we have $\partial a_i\cap\partial a_j\setminus\{\vec x\}=\emptyset$.
If so, then
		\begin{align*}
		\sum_{\tau\in\Omega^U}\prod_{i=1}^X\brk{\psi_{a_i}(\tau(\partial a_i))\prod_{y\in\partial a_i\setminus x}\mu_{\hat\G,y\to a_i}(\tau(y))}&=
		\sum_{\tau(x)\in\Omega}\prod_{i=1}^X\brk{\sum_{\tau\in\Omega^{\partial a_i\setminus x}}
			\psi_{a_i}(\tau(\partial a_i))\prod_{y\in\partial a_i\setminus x}\mu_{\hat\G,y\to a_i}(\tau(y))}.
	\end{align*}
Hence, \Lem~\ref{Claim_eqRSBethe2a} entails
	\begin{align}\label{eqClaim_eqRSBethe2_4}
	\pr\brk{\ln\frac{Z_{\hat\G}}{Z_{\G''}}=S+o(1)}&=1-o(1),\qquad\mbox{where}\quad
	S=\ln\sum_{\tau(x)\in\Omega}\prod_{i=1}^X\brk{\sum_{\tau\in\Omega^{\partial a_i\setminus\vec x}}
			\psi_{a_i}(\tau(\partial a_i))\prod_{y\in\partial a_i\setminus\vec x}\mu_{\hat\G,y\to a_i}(\tau(y))}.
	\end{align}
Further, by Fact~\ref{Claim_contigPoisson} and \Thm~\ref{Thm_RSBP} 
the messages $\mu_{\hat\G,\nix\to\nix}$ are a $o(1)$-approximate Belief Propagation fixed point \whp\
Consequently, since $\vec x$ is chosen uniformly, we conclude that \whp
	\begin{align}\label{eqClaim_eqRSBethe2_5}
	S&=o(1)+		\ln\brk{\sum_{\tau\in\Omega}\prod_{i=1}^X\mu_{\hat\G,a_i\to\vec  x}(\tau)}
			+\sum_{i=1}^X\ln\brk{\sum_{\tau\in\Omega^{\partial a_i}}\psi_{a_i}(\tau)	
						\prod_{y\in\partial a_i\setminus\vec x}\mu_{\hat\G,y\to a_i}(\tau(y))}.
	\end{align}
Moreover, again due to the $o(1)$-approximate Belief Propagation fixed point property, \whp\ we have
	\begin{align}\label{eqClaim_eqRSBethe2_6}
	\ln\sum_{\tau\in\Omega}\mu_{\hat\G,\vec x\to a_i}(\tau)\mu_{\hat\G,a_i\to\vec  x}(\tau)
		&=o(1)+\ln\frac{\sum_{\tau\in\Omega^{\partial a_i}}\psi_{a_i}(\tau)\prod_{y\in\partial a_i}\mu_{\hat\G,y\to a_i}(\tau(y))}
			{\sum_{\tau\in\Omega^{y\in\partial a_i}}\psi_{a_i}(\tau)\prod_{y\in\partial a_i\setminus x}\mu_{\hat\G,y\to a_i}(\tau(y))}
			\quad\mbox{ for all $i\in[X]$}.
	\end{align}
Plugging (\ref{eqClaim_eqRSBethe2_6}) into~(\ref{eqClaim_eqRSBethe2_5}), we see that \whp
	\begin{align}\nonumber
	S&=o(1)+
		\ln\brk{\sum_{\sigma\in\Omega}\prod_{i=1}^X\mu_{\hat\G,a_i\to\vec  x}(\sigma)}
			+\sum_{i=1}^X
				\ln\brk{\sum_{\tau\in\Omega^{\partial a_i}}\psi_{a_i}(\tau)\prod_{y\in\partial a_i}\mu_{\hat\G,y\to a_i}(\tau(y))}
					-\ln\brk{\sum_{\tau\in\Omega}\mu_{\hat\G,\vec x\to a_i}(\tau)\mu_{\hat\G,a_i\to\vec  x}(\tau)}\\
		&=S_1(\vec x)+S_2(\vec x)+S_3(\vec x)+o(1).
			\label{eqClaim_eqRSBethe2_7}
	\end{align}
Thus, the assertion follows from (\ref{eqClaim_eqRSBethe2_4}).
\end{proof}

\noindent
Combining \Lem~\ref{Claim_eqRSBethe1} and \Cor~\ref{Claim_eqRSBethe2}, we see that \whp\ $\hat\G$ is such that
	\begin{align*}
	\Erw\brk{\ln\frac{Z_{\G'}}{Z_{\G''}}\bigg|\hat\G}
		&=o(1)+\frac1n\brk{\sum_{x\in V(\hat\G)}(S_1(x)+S_3(x))+\sum_{a\in F(\hat\G)}S_4(a)}.
	\end{align*}
Moreover, by our assumption and Fact~\ref{Claim_contigPoisson} the r.h.s.\ converges to $B$ in probability.
Thus, \Cor~\ref{Thm_RSBethe} follows by taking the expectation over $\hat\G$.

\subsection{Proof of \Cor~\ref{Cor_RSBethe}}

\noindent
We begin by deriving formulas for the variable and constraint marginals in terms of the messages.

\begin{lemma}\label{Claim_VarMargs}
We have
	\begin{align}\label{eqClaim_VarMargs}
	\frac1n\Erw\sum_{i=1}^n\sum_{\sigma\in\Omega}\abs{\mu_{\G_n,x_i}(\sigma)-\frac{\prod_{a\in\partial x_i}\mu_{\G_n,x_i\to a}(\sigma)}
		{\sum_{\tau\in\Omega}\prod_{a\in\partial x_i}\mu_{\G_n,x_i\to a}(\tau)}}=o(1).
	\end{align}
\end{lemma}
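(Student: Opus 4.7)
The proof is a cavity argument combining Lemma~\ref{Lemma_cavityRS} with Theorem~\ref{Thm_RSBP}. The strategy is to show that for most variables $x_i$ the outgoing messages $\mu_{\G_n,x_i\to a}$ are uniformly close to the true marginal $\mu_{\G_n,x_i}$, and then propagate this approximation through the normalized product on the right-hand side of~(\ref{eqClaim_VarMargs}).

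As a first reduction I would split the sum into contributions from low-degree and high-degree variables. Since $d_{\G_n}(x_i)$ is asymptotically distributed as $\Po(d)$, for any $\eta>0$ there is $L=L(\eta)$ such that at most $\eta n$ variables satisfy $d_{\G_n}(x_i)>L$ \whp; using the crude bound $\TV{\nix}\le2$ their contribution to~(\ref{eqClaim_VarMargs}) is at most $2\eta+o(1)$, which is negligible as $\eta\searrow0$.

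For $x_i$ with $d_{\G_n}(x_i)\le L$, recall that $\mu_{\G_n,x_i\to a}$ is by definition the marginal of $x_i$ in $\G_n-a$. Applying Lemma~\ref{Lemma_cavityRS} with $G=\G_n-a$ and $G^+=\G_n$ (a single added constraint), for each fixed $a\in F(\G_n)$ one obtains $\sum_{j=1}^n\TV{\mu_{\G_n,x_j\to a}-\mu_{\G_n,x_j}}=o(n)$ \whp\ Since a variable of bounded degree participates in only $O(L)$ such relations, a double counting over $a\in F(\G_n)$ yields that for most $i$ with $d_{\G_n}(x_i)\le L$ and every $a\in\partial x_i$ simultaneously, $\mu_{\G_n,x_i\to a}(\sigma)=\mu_{\G_n,x_i}(\sigma)+o(1)$ uniformly in $\sigma\in\Omega$.

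Finally I would substitute these approximations into the right-hand side of~(\ref{eqClaim_VarMargs}) and use the approximate Belief Propagation fixed point property from Theorem~\ref{Thm_RSBP} to rewrite the product of outgoing messages. Specifically, the first half of the BP relation expresses each $\mu_{\G_n,x_i\to a}(\sigma)$ (up to $o(1)$) as a normalized product of the constraint-to-variable messages $\mu_{\G_n,b\to x_i}(\sigma)$ over $b\in\partial x_i\setminus a$; inserting this into $\prod_{a\in\partial x_i}\mu_{\G_n,x_i\to a}(\sigma)$ and using the cavity approximation of the previous step allows one to cancel the extraneous factors so that the normalized product matches $\mu_{\G_n,x_i}(\sigma)$ up to additive $o(1)$ error. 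The main obstacle will be the uniform bookkeeping of the accumulated approximation errors across all $a\in\partial x_i$ and all degrees $d_{\G_n}(x_i)\in[0,L]$, so that after averaging over $i$ the total error is genuinely $o(1)$.
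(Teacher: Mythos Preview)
Your proposal has a genuine gap in the final step. Even granting that $\mu_{\G_n,x_i\to a}\approx\mu_{\G_n,x_i}$ for every $a\in\partial x_i$, substituting this into the normalized product on the right of~(\ref{eqClaim_VarMargs}) yields
\[
\frac{\prod_{a\in\partial x_i}\mu_{\G_n,x_i\to a}(\sigma)}{\sum_{\tau}\prod_{a\in\partial x_i}\mu_{\G_n,x_i\to a}(\tau)}
\;\approx\;\frac{\mu_{\G_n,x_i}(\sigma)^{d(x_i)}}{\sum_{\tau}\mu_{\G_n,x_i}(\tau)^{d(x_i)}},
\]
which is \emph{not} $\mu_{\G_n,x_i}(\sigma)$ unless $d(x_i)\le1$ or the marginal is degenerate. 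Invoking the BP relation does not repair this: writing $\mu_{x_i\to a}\propto\prod_{b\ne a}\mu_{b\to x_i}$ and multiplying over all $a\in\partial x_i$ gives $\prod_a\mu_{x_i\to a}\propto\bigl(\prod_b\mu_{b\to x_i}\bigr)^{d(x_i)-1}$, so after normalization you again obtain a power of the (putative) marginal rather than the marginal itself. The ``cancellation of extraneous factors'' you allude to does not happen; indeed on any acyclic factor graph with $d(x_i)\ge3$ and a non-uniform marginal the identity in~(\ref{eqClaim_VarMargs}) as literally written is false.

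This points to the real issue: the statement as printed, with the \emph{outgoing} messages $\mu_{\G_n,x_i\to a}$, is a typo. The paper's own proof (see~(\ref{eqClaim_VarMargs3})) and the subsequent application in~(\ref{eqCor_RSBethe2}) both use the \emph{incoming} messages $\mu_{\G_n,a\to x_i}$, which is the standard BP expression for a variable marginal. The paper's argument for that corrected statement does not proceed via Lemma~\ref{Lemma_cavityRS} and a comparison of $\mu_{\G_n,x_i}$ with cavity marginals. Instead it reruns the coupling from the proof of Theorem~\ref{Thm_RSBP}: generate $\G'$ with $x_n$ isolated, then attach $\Delta$ random constraints $b_1,\ldots,b_\Delta$ to obtain $\G''$ distributed as $\G_n$. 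The $(o(1),(k-1)\Delta)$-symmetry of $\G'$ (from~(\ref{eqRS}) and Corollary~\ref{Cor_states}) makes the joint law of $\bigcup_j\partial b_j\setminus\{x_n\}$ under $\mu_{\G'}$ factorize, which simultaneously expresses both $\mu_{\G'',x_n}(\sigma)$ and each $\mu_{\G'',b_i\to x_n}(\sigma)$ in terms of the same auxiliary quantities $\nu_i(\sigma)$ of~(\ref{eqnuMessages}); the desired identity then follows by comparison.
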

\begin{proof}
Proceeding along the lines of the proof of \Thm~\ref{Thm_RSBP}, we let
$\G'$ be the random factor graph on $x_1,\ldots,x_n$ containing $m'=\Po(dn(1-1/n)^k/k)$ random constraint nodes
that do not touch $x_n$.
Obtain $\G''$ from $\G'$ by adding $\Delta=\Po(dn(1-(1-1/n)^k)/k)$ random constraint nodes $b_1,\ldots,b_\Delta$ that contain $x_n$
so that $\G''$ is distributed as $\G_n$.
Let $U=\bigcup_{i=1}^\Delta\partial b_i$.
In complete analogy to (\ref{eqProof_Thm_RSBP8}) we obtain the formula
	\begin{align}\label{eqClaim_VarMargs1}
		\mu_{\G'',x_n}(\sigma)
		&=\frac{\sum_{\tau\in\Omega^U}\vecone\{\tau(x_n)=\sigma\}
			\bck{\vecone\{\forall y\in U\setminus\{x_n\}:\SIGMA(y)=\tau(y)}_{\mu_{\G'}}\prod_{j=1}^\Delta\psi_{b_j}(\tau(\partial b_j))}
			{\sum_{\tau\in\Omega^U}	\bck{\vecone\{\forall y\in U\setminus\{x_n\}:\SIGMA(y)=\tau(y)}_{\mu_{\G'}}
				\prod_{j=1}^\Delta\psi_{b_j}(\tau(\partial b_j))}.
	\end{align}
Further, since $\pr\brk{\partial_{\G_n} x_n=\emptyset}=\Omega(1)$, (\ref{eqRS}) implies that $\G'$ is $(o(1),2)$-symmetric \whp\
Therefore, \Cor~\ref{Cor_states} shows that $\G'$ is in fact $(o(1),2+(k-1)\Delta)$-symmetric \whp\
Consequently, \whp
	\begin{align}\label{eqClaim_VarMargs2}
	\sum_{\tau\in\Omega^U}
		\abs{\bck{\vecone\{\forall y\in U\setminus\{x_n\}:\SIGMA(y)=\tau(y)}_{\mu_{\G'}}-\prod_{y\in U\setminus\{x_n\}}\mu_{\G',y}(\tau(y))}
		=o(1).
	\end{align}
Hence, with $\nu_i(\sigma)$ from (\ref{eqnuMessages}) we see that \whp
	\begin{align}\label{eqClaim_VarMargs3}
	\sum_{i=1}^\Delta\sum_{\sigma\in\Omega}\abs{\mu_{\G'',b_i\to x_n}(\sigma)-\frac{\nu_i(\sigma)}{\sum_{\tau\in\Omega}\nu_i(\tau)}}&=o(1)
	\end{align}
Finally, combining (\ref{eqClaim_VarMargs1})--(\ref{eqClaim_VarMargs3}) completes the proof.
\end{proof}

\begin{lemma}\label{Claim_ConstrMargs}
We have
	\begin{align}\label{eqClaim_ConstrMargs}
	\frac1n\Erw\sum_{a\in F(\G_n)}\sum_{\sigma\in\Omega^{\partial a}}
		\abs{\mu_{\G_n,a}(\sigma)-\frac{\psi_a(\sigma)\prod_{x\in\partial a}\mu_{\G_n,x\to a}(\sigma(x))}
		{\sum_{\tau\in\Omega^{\partial a}}\psi_a(\tau)\prod_{x\in\partial a}\mu_{\G_n,x\to a}(\tau(x))}}=o(1).
	\end{align}
\end{lemma}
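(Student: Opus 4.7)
The plan is to mirror the proof of Lemma \ref{Claim_VarMargs}, interchanging the roles of a variable node and a constraint node. By exchangeability of the constraint nodes in the Poisson factor graph model and linearity of expectation, and since the summand in (\ref{eqClaim_ConstrMargs}) is uniformly bounded by $2$, it suffices to show that for a uniformly chosen constraint $a^\star\in F(\G_n)$ the expected single-constraint error is $o(1)$, as $\Erw[|F(\G_n)|]=dn/k$. So I will single out one ``special'' constraint $a^\star$ and set $\G'=\G_n-a^\star$.

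By Poisson thinning, conditional on $|F(\G_n)|=m$ the graph $\G'$ is distributed as a uniformly random factor graph with $m-1$ constraints (each with weight drawn from $\rho$ and an independent uniform $k$-tuple of variables), so its law is mutually contiguous with that of $\G_n$. Hence (\ref{eqRS}) implies $\G'$ is $(o(1),2)$-symmetric whp, and Corollary \ref{Cor_states} upgrades this to $(o(1),k)$-symmetry whp. Crucially, both $\partial a^\star$ and $\psi_{a^\star}\sim\rho$ are independent of $\G'$, and by the very definition of the messages $\mu_{\G_n,y\to a^\star}=\mu_{\G',y}$ for every $y\in\partial a^\star$.

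Now the formula (\ref{eqGibbs}) for the Gibbs measure, exactly as in (\ref{eqProof_Thm_RSBP8}), gives
$$\mu_{\G_n,a^\star}(\sigma)=\frac{\psi_{a^\star}(\sigma)\bck{\vecone\{\forall y\in\partial a^\star:\SIGMA(y)=\sigma(y)\}}_{\mu_{\G'}}}{\sum_{\tau\in\Omega^{\partial a^\star}}\psi_{a^\star}(\tau)\bck{\vecone\{\forall y\in\partial a^\star:\SIGMA(y)=\tau(y)\}}_{\mu_{\G'}}}.$$
The $(o(1),k)$-symmetry, combined with Markov's inequality over the uniformly random independent $k$-tuple $\partial a^\star$, yields that whp
$$\sum_{\tau\in\Omega^{\partial a^\star}}\abs{\bck{\vecone\{\forall y\in\partial a^\star:\SIGMA(y)=\tau(y)\}}_{\mu_{\G'}}-\prod_{y\in\partial a^\star}\mu_{\G',y}(\tau(y))}=o(1).$$
Plugging this in and using $\mu_{\G',y}=\mu_{\G_n,y\to a^\star}$ delivers the desired approximation for $\mu_{\G_n,a^\star}$. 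Strict positivity of the weight functions together with finiteness of $\Psi$ ensures that both denominators are bounded away from zero uniformly, so the error propagates cleanly from the numerator to the ratio.

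The main obstacle is bookkeeping rather than conceptual: verifying the contiguity of $\G'$ with $\G_n$ carefully (so that $(o(1),2)$-symmetry really transfers), discarding the event of probability $O(1/n)$ that $\partial a^\star$ has repeated coordinates, and making the quantifiers in the $(o(1),k)$-symmetry statement uniform enough that, after multiplying the per-constraint expected error by $\Erw[|F(\G_n)|]=O(n)$ and dividing by $n$, the bound in (\ref{eqClaim_ConstrMargs}) genuinely stays $o(1)$. All three are handled by the same tools already used in the proofs of Lemmas \ref{Lemma_cavityRS} and \ref{Claim_VarMargs}.
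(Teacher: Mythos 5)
Your proposal is correct and takes essentially the same route as the paper's proof: single out one constraint, invoke $(o(1),k)$-symmetry via Corollary~\ref{Cor_states}, use the Gibbs formula to relate $\mu_{\G_n,a}$ to the joint marginal of $\partial a$ in the graph without $a$, and identify $\mu_{\G',y}$ with the message $\mu_{\G_n,y\to a^\star}$. The only (cosmetic) difference is the direction of the decoupling: the paper \emph{adds} a fresh constraint to $\G_n$ and thus reads the $(o(1),k)$-symmetry of $\mu_{\G_n}$ directly, whereas you \emph{remove} one to form $\G'=\G_n-a^\star$ and transfer the symmetry to $\mu_{\G'}$ by contiguity; both variants need the same tools and are equally valid.
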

\begin{proof}
Obtain $\G'$ from $\G_n$ by adding one single random constraint node $a$.
Then the distribution of the pair $(\G',a)$ is at total variation distance $O(1/n)$ from the distribution of the pair $(\G_n,\vec a)$, where
$\vec a$ is a random constraint node of $\G_n$ given $F(\G_n)\neq\emptyset$.
Therefore, it suffices to prove the estimate
	\begin{align}\label{eqClaim_ConstrMargs1}
	\Erw\sum_{\sigma\in\Omega^{\partial a}}
		\abs{\mu_{\G',a}(\sigma)-\frac{\psi_a(\sigma)\prod_{x\in\partial a}\mu_{\G',x\to a}(\sigma(x))}
		{\sum_{\tau\in\Omega^{\partial a}}\psi_a(\tau)\prod_{x\in\partial a}\mu_{\G',x\to a}(\tau(x))}}=o(1).
	\end{align}
The assumption (\ref{eqRS}) and \Cor~\ref{Cor_states} imply that \whp\ $\mu_{\G_n}$ is $(o(1),k)$-symmetric.
Hence, because $\partial_{\G'} a$ is random, \whp\ we have $|\mu_{\G_n,\partial a}(\sigma)-\prod_{x\in\partial a}\mu_{\G_n,x}(\sigma(x))|=o(1)$
for all $\sigma\in\Omega^{\partial a}$.
Since  $\mu_{\G_n,x}=\mu_{\G',x\to a}$ for all $x\in\partial a$,  this means that \whp\
	\begin{align}\label{eqClaim_ConstrMargs2}
	\sum_{\sigma\in\Omega^{\partial a}}\abs{\mu_{\G_n,\partial a}(\sigma)-\prod_{x\in\partial a}\mu_{\G',x\to a}(\sigma(x))}=o(1)
	\end{align}
Further, by the definition (\ref{eqGibbs}) of the Gibbs measure,
	\begin{align}\label{eqClaim_ConstrMargs3}
	\mu_{\G',a}(\sigma)&=\frac{\psi_a(\sigma)\mu_{\G_n,\partial a}(\sigma)}{\sum_{\tau\in\Omega^{\partial a}}
		\psi_a(\tau)\mu_{\G_n,\partial a}(\tau)}.
	\end{align}
Thus, (\ref{eqClaim_ConstrMargs1}) just follows from (\ref{eqClaim_ConstrMargs2}) and (\ref{eqClaim_ConstrMargs3}).
\end{proof}

\noindent
Essentially, we will prove \Cor~\ref{Cor_RSBethe} by following the steps of the derivation of the corresponding formula for acyclic factor graphs~\cite[\Chap~14]{MM}.
We just need to allow for error terms that come in because the right hand sides of (\ref{Claim_VarMargs}) and (\ref{Claim_ConstrMargs})
are $o(1)$ rather than $0$ (like in the acyclic case).
Specifically, by Lemma~\ref{Claim_ConstrMargs} \whp\ for all but $o(n)$ constraint nodes $a\in F(\G_n)$ we have
	\begin{align*}
		-\sum_{\sigma\in\Omega^{\partial a}}\mu_{\G_n,a}(\sigma)\ln\frac{\mu_{\G_n,a}(\sigma)}{\psi_a(\sigma)}
			&=o(1)-\sum_{\sigma\in\Omega^{\partial a}}\mu_{\G_n,a}(\sigma)\ln\frac{\prod_{x\in\partial a}\mu_{\G_n,x\to a}(\sigma(x))}
				{\sum_{\tau\in\Omega^{\partial a}}\psi_a(\tau)\prod_{x\in\partial a}\mu_{\G_n,x\to a}(\tau(x))}\\
			&=o(1)+\ln\brk{\sum_{\tau\in\Omega^{\partial a}}\psi_a(\tau)\prod_{x\in\partial a}\mu_{\G_n,x\to a}(\sigma(x))}
				-\sum_{x\in\partial a}\sum_{\sigma\in\Omega}\mu_{\G_n,x}(\sigma)\ln\mu_{\G_n,x\to a}(\sigma).
	\end{align*}
Further, by Lemma~\ref{Claim_VarMargs} \whp\ for all but $o(n)$ variable nodes $x\in V(\G_n)$ we have
	\begin{align*}
	-\sum_{\sigma\in\Omega}\mu_{\G_n,x}(\sigma)\ln\mu_{\G_n,x\to a}(\sigma)&=
		o(1)-\sum_{\sigma\in\Omega}\mu_{\G_n,x}(\sigma)\ln\frac{\prod_{b\in\partial x\setminus a}\mu_{\G_n,b\to x}(\sigma)}
			{\sum_{\tau\in\Omega}\prod_{b\in\partial x\setminus a}\mu_{\G_n,b\to x}(\tau)}\\
		&=o(1)+\ln\brk{\sum_{\tau\in\Omega}\prod_{b\in\partial x\setminus a}\mu_{\G_n,b\to x}(\tau)}
		-\sum_{b\in\partial x\setminus a}\sum_{\sigma}\mu_{\G_n,x}(\sigma)\ln\mu_{\G_n,b\to x}(\sigma).
	\end{align*}
Hence, Fact~\ref{Fact_sparse} implies that \whp\ for all but $o(n)$ constraint nodes $a\in F(\G_n)$,
	\begin{align}\nonumber
	-\sum_{\sigma\in\Omega^{\partial a}}\mu_{\G_n,a}(\sigma)\ln\frac{\mu_{\G_n,a}(\sigma)}{\psi_a(\sigma)}&=
		o(1)+\ln\brk{\sum_{\tau\in\Omega^{\partial a}}\psi_a(\tau_a)\prod_{x\in\partial a}\mu_{\G_n,x\to a}(\tau(x))}\\
	&\quad		+\sum_{x\in\partial a}\brk{\ln\brk{\sum_{\tau\in\Omega}\prod_{b\in\partial x\setminus a}\mu_{\G_n,b\to x}(\tau)}
		-\sum_{b\in\partial x\setminus a}\sum_{\sigma\in\Omega}\mu_{\G_n,x}(\sigma)\ln\mu_{\G_n,b\to x}(\sigma)}.
			\label{eqCor_RSBethe1}
	\end{align}
Moreover, once more by Lemma~\ref{Claim_VarMargs} \whp\ all but $o(n)$ variable nodes $x$ satisfy
	\begin{align}		\nonumber
	-\sum_{\sigma\in\Omega}\mu_{\G_n,x}(\sigma)\ln\mu_{\G_n,x}(\sigma)
		&=o(1)-\sum_{\sigma\in\Omega}
			\mu_{\G_n,x}(\sigma)\ln\frac{\prod_{a\in\partial x}\mu_{\G_n,a\to x}(\sigma)}{\sum_{\tau\in\Omega}
				\prod_{a\in\partial x}\mu_{\G_n,a\to x}(\tau)}\\
			&=o(1)+\ln\brk{\sum_{\tau\in\Omega}\prod_{b\in\partial x}\mu_{\G_n,b\to x}(\tau)}-
				\sum_{b\in\partial x}\sum_{\sigma\in\Omega}\mu_{\G_n,x}(\sigma)\ln\mu_{\G_n,b\to x}(\sigma).
						\label{eqCor_RSBethe2}
	\end{align}
Finally, combining (\ref{eqCor_RSBethe1}) and (\ref{eqCor_RSBethe2}), we see that \whp
	\begin{align*}
	\frac1n\cB_{\G_n}'&=o(1)+
		\sum_{x\in V(\G_n)}\ln\brk{\sum_{\tau\in\Omega}\prod_{b\in\partial x}\mu_{\G_n,b\to x}(\tau)}+
		\sum_{a\in F(\G_n)}\ln\brk{\sum_{\tau\in\Omega^{\partial a}}\psi_a(\tau_a)\prod_{x\in\partial a}\mu_{\G_n,x\to a}(\sigma_x)}\\
	&\qquad\qquad		+\sum_{a\in F(\G_n),x\in\partial a}\ln\frac{\sum_{\tau\in\Omega}\prod_{b\in\partial x\setminus a}\mu_{\G_n,b\to x}(\tau)}
			{\sum_{\tau\in\Omega}\prod_{b\in\partial x}\mu_{\G_n,b\to x}(\tau)}=\frac1n\cB_{\G_n}+o(1).
	\end{align*}
Thus, \Cor~\ref{Cor_RSBethe} follows from \Cor~\ref{Thm_RSBethe}.

\section{Regular factor graphs}\label{Sec_regular}

\noindent {\em
In this section we fix
 $d,\Omega,k,\Psi,\rho ,\eps$ such that $\reg^\eps_n=\reg^\eps_{n,\mathrm{reg}}
	(d,\Omega,k,\Psi,\rho)$ satisfies (\ref{eqregRS}).}

\medskip\noindent
We prove \Thm~\ref{Thm_regRSBP} and \Cor~\ref{Thm_regRSBethe} by adapting the proofs of \Thm~\ref{Thm_RSBP} and \Cor~\ref{Thm_RSBethe}
to the regular factor graph model.
In the proofs in \Sec~\ref{Sec_Poisson} we exploited the Poisson nature of the factor graphs to determine the effect of adding or removing
a few constraint and/or variable nodes.
Here the necessary wiggle room is provided by the ``$\eps$-percolation'' of the otherwise rigid $d$-regular model $\reg_n$.
This enables a broadly similar analysis to that of \Sec~\ref{Sec_Poisson}.
However, some of the details are subtle, most notably the coupling required for the Aizenman-Sims-Starr argument in \Sec~\ref{Sec_regRSBethe}.

\subsection{Proof of Theorem~\ref{Thm_regRSBP}}
Fix $\delta=\delta(\eps,\Psi) >\eta=\eta(\gamma)>0$.
Again it suffices to prove that with probability at least $1-\del$ we have
	\begin{align}\label{eqProof_Thm_regBP1}
	\sum_{a\in\partial x_n, \sigma\in\Omega}\abs{\mu_{\reg_n^\eps,x_n\to a}(\sigma)-
			\frac{\prod_{b\in\partial x\setminus a}\mu_{\reg_n^\eps,b\to x_n}(\sigma)}
				{\sum_{\tau\in\Omega}\prod_{b\in\partial x_n\setminus a}\mu_{\reg_n^\eps,b\to x_n}(\tau)}}&<\del\qquad\mbox{and}\\
	\sum_{a\in\partial x_n, \sigma\in\Omega}
				\abs{\mu_{\reg_n^\eps,a\to x_n}(\sigma)-
			\frac{\sum_{\tau\in\Omega^{\partial a}}\vecone\{\tau(x_n)=\sigma\}\psi_a(\tau)\prod_{y\in\partial a\setminus x_n}\mu_{\reg_n^\eps,y\to a}(\tau(y))}
				{\sum_{\tau\in\Omega^{\partial a}}\psi_a(\tau)\prod_{y\in\partial a\setminus x_n}\mu_{\reg_n^\eps,y\to a}(\tau(y))}}&<\del.
					\label{eqProof_Thm_regBP2}
	\end{align}
 Let $\Delta=d_{\reg_n^\eps}(x_n)$.
 Then $0 \le \Delta \le d$, and $\Pr[\Delta =0] = \Omega( \eps^d)$ by {\bf REG2--REG3}.

Let $\reg'$ be the random factor graph obtained from $\reg_n^\eps$ by deleting all constraint nodes $a$ such that $x_n\in\partial a$.
Then the distribution $\reg'$ is at total variation distance $O(1/n)$ from the distribution of $\reg_n^\eps$ given that $\partial x_n=\emptyset$.
Therefore, the assumption (\ref{eqregRS}) and \Cor~\ref{Cor_states} imply
	\begin{align}\nonumber
	\pr\brk{\reg'\mbox{ fails to be $(\eta,dk)$-symmetric }}&=\pr\brk{\reg_n^\eps\mbox{ fails to be $(\eta,dk)$-symmetric}|\Delta=0}+o(1)\\
		&\leq\frac{\pr\brk{\reg_n^\eps\mbox{ fails to be $(\eta,dk)$-symmetric}}}{\pr\brk{\Delta=0}}+o(1)
		=o(1).\label{eqProof_Thm_regBP5}
	\end{align}
Furthermore, by the Chernoff bound (cf.~(\ref{eq:chernoff}))
	\begin{align}\label{eqProof_Thm_regBP5a}
	\pr\brk{\sum_{a\in F(\reg')}d_{\reg'}(a)\leq(1-\eps/2)dn}=1-o(1).
	\end{align}
Hence, we may condition on the event that $\reg'$ is $(\eta,dk)$-symmetric and that $\sum_{a\in F(\reg')}d_{\reg'}(a)\leq(1-\eps/2)dn$.
If so, then  the set $R$ of variable nodes $x$ of $\reg'$ such that $d_{\reg'}(a)<d$ has size at least $|R|\geq\eps n/2$.

Given $\reg'$ and $\Delta$, let $\reg''$ be the factor graph obtained from $\reg'$ by adding $\Delta$
constraint nodes $b_1,\ldots,b_\Delta$ and perform the following  independently for each $i\in[\Delta]$.
Choose $\psi_i$ from $\Psi$ according to $\rho$ and choose $J_i\subset[k]$
by including each $j \in [k]$ with probability $(1-\eps)$  independently, conditioned on the event that each $|J_i| \ge 1$.
Then let $\psi_{b_i} = \psi_i^{J_i}$.
Connect $x_n$ to each $ b_i$ at a uniformly random position in $J_i$. 
Then connect  constraint $b_i$ at the remaining slots to $|J_i|-1$ variable nodes chosen from $R$	
according to the distribution $q(x)= (d- d_{\reg'}(x))/\sum_{y\in R}(d- d_{\reg'}(x))$.
Our conditioning on $\sum_{a\in F(\reg')}d_{\reg'}(a)\leq(1-\eps/2)dn$ ensures that  all variable nodes of $\reg''$ have degree at most $d$ \whp\
Hence, the distribution of $\reg''$ is at total variation distance $o(1)$ of the distribution of $\reg_n^\eps$ given $\Delta$.

As in the Poisson case we just need to prove the following:
with probability at least $1-\del/d$ we have
	\begin{align}\label{eqProof_Thm_regBP6}
	\sum_{\sigma\in\Omega}\abs{\mu_{\reg'',x_n\to b_1}(\sigma)-
			\frac{\prod_{i=2}^\Delta\mu_{\reg'',b_i\to x_n}(\sigma)}
				{\sum_{\tau\in\Omega}\prod_{i=2}^\Delta\mu_{\reg'',b_i\to x_n}(\tau)}}&<\del/d\qquad\mbox{and}\\
	\sum_{\sigma\in\Omega}
			\abs{\mu_{\reg'',b_1\to x_n}(\sigma)-
			\frac{\sum_{(\tau_y)_{y\in\partial b_1}}\vecone\{\tau_{x_n}=\sigma\}\psi_{b_1}(\tau)
					\prod_{y\in\partial b_1\setminus x_n}\mu_{\reg_n,y\to b_1}(\tau_y)}
				{\sum_{(\tau_y)_{y\in\partial b_1}}\psi_a(\tau)\prod_{y\in\partial b_1\setminus x_n}\mu_{\reg_n,y\to b_1}(\tau_y)}}&<\del/d.
					\label{eqProof_Thm_regBP7}
	\end{align}
If we again let $U=\bigcup_{j\geq2}\partial b_j$ be the set of all variable nodes joined to constraints $b_2,\ldots,b_\Delta$, then since $\mu_{\reg'',x_n\to b_1}$ is the marginal of $x_n$ in the factor graph $\reg''-b_1$ and $\mu_{\reg'',b_i\to x_n}$ is the marginal of $x_n$ in $\reg'+b_i$, we obtain the analogous equations to \eqref{eqProof_Thm_RSBP8} and \eqref{eqProof_Thm_RSBP8a}:
	\begin{align}
	\mu_{\reg'',x_n\to b_1}(\sigma)	&=\frac{\sum_{\tau\in\Omega^U}\vecone\{\tau(x_n)=\sigma\}
			\bck{\vecone\{\forall y\in U\setminus\{x_n\}:\SIGMA(y)=\tau(y)}_{\reg'}\prod_{j=2}^\Delta\psi_{b_j}(\tau(\partial b_j))}
			{\sum_{\tau\in\Omega^U}	\bck{\vecone\{\forall y\in U\setminus\{x_n\}:\SIGMA(y)=\tau(y)}_{\mu_{\reg'}}
				\prod_{j=2}^\Delta\psi_{b_j}(\tau(\partial b_j))},
										\label{eqProof_Thm_regBP8}\\
	\mu_{\reg'',b_i\to x_n}(\sigma)&=\frac{\sum_{\tau\in\Omega^{\partial b_i}}\vecone\{\tau(x_n)=\sigma\}
			\bck{\vecone\{\forall y\in \partial b_i\setminus\{x_n\}:\SIGMA(y)=\tau(y)}_{\mu_{\reg'}}\psi_{b_i}(\tau)}
			{\sum_{\tau\in\Omega^{\partial b_i}}
			\bck{\vecone\{\forall y\in \partial b_i\setminus\{x_n\}:\SIGMA(y)=\tau(y)}_{\mu_{\reg'}}\psi_{b_i}(\tau)}.
	\end{align}
Further, given that $\sum_{a\in F(\reg')}d_{\reg'}(a)\leq(1-\eps/2)dn$ the distribution $q$ is such that $1/(d|R|)\leq q(x)\leq1/|R|$.
Hence, $q$ is ``within a factor of $d$'' of being uniform.
In effect, we can choose $\eta>0$ so small that
	our assumption that $\reg'$ is $(\eta,dk)$-symmetric ensures that with probability
at least $1-\eta^{1/3}$ we have
	\begin{align}									\label{eqProof_Thm_regBP666}
	\sum_{\tau\in\Omega^U}\abs{\bck{\vecone\{\forall y\in U\setminus\{x_n\}:\SIGMA(y)=\tau(y)}_{\reg'}-\prod_{y\in U}\mu_{\reg',y}(\tau(y))}
		<\eta^{1/3}.
	\end{align}
Due to (\ref{eqProof_Thm_regBP5}) and (\ref{eqProof_Thm_regBP5a}) we obtain the assertion
	from (\ref{eqProof_Thm_regBP8})--(\ref{eqProof_Thm_regBP666})
by following the proof of \Thm~\ref{Thm_RSBP} verbatim  from (\ref{eqProof_Thm_RSBP9}).

\subsection{Proof of \Cor~\ref{Thm_regRSBethe}}\label{Sec_regRSBethe}

As in the proof of \Cor~\ref{Thm_RSBethe} we couple $\reg^\eps_{n+1}$ and $\reg^\eps_n$   via a common supergraph $\hat\G$ obtained as follows.
Choose $\hat m$ from the distribution $d+\Po(d(n+1)/k)$ conditional on the event that $k\hat m<dn$.
Then, choose $\hat\G$ with variable nodes $x_1,\ldots,x_{n+1}$
and constraint nodes $\hat a_1,\ldots,\hat a_{\hat m}$ from the distribution $\reg^\eps_{n+1}$ given that $|F(\reg^\eps_{n+1})|=\hat m$.

\begin{claim}
\label{claim:hatgconting}
$\hat\G$ and $\reg_{n+1}^\eps$ are mutually contiguous.
\end{claim}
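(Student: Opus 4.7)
The plan is to exploit the fact that, conditional on the number of constraints $m$, both factor graph distributions coincide, so mutual contiguity reduces to comparing the two marginal laws of $|F|$. By construction $\hat\G$ is a sample from $\reg_{n+1}^\eps$ conditioned on $|F(\reg_{n+1}^\eps)|=\hat m$; hence for every event $\cA$ of factor graphs on $\{x_1,\ldots,x_{n+1}\}$,
\begin{align*}
\pr[\hat\G\in\cA]=\sum_j\pr[\hat m=j]\,\pr\brk{\reg_{n+1}^\eps\in\cA\mid|F(\reg_{n+1}^\eps)|=j},
\end{align*}
and analogously with $\pr[|F(\reg_{n+1}^\eps)|=j]$ in place of $\pr[\hat m=j]$ for $\pr[\reg_{n+1}^\eps\in\cA]$. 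Thus it suffices to show that the two laws of the number of constraints are mutually contiguous.

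Write $\lambda=d(n+1)/k$. The law of $|F(\reg_{n+1}^\eps)|$ is $\Po(\lambda)$ conditioned on the {\bf REG3} non-restart event, which by the Chernoff bound~(\ref{eq:chernoff}) has probability $1-\exp(-\Omega(\eps^2 n))$. The law of $\hat m$ is $d+\Po(\lambda)$ conditioned on $\hat m<dn/k$; since the underlying mean $d+\lambda=dn/k+d(1+1/k)$ sits a constant distance above the threshold $dn/k$, the central limit theorem delivers a lower bound $\pr[\hat m<dn/k]\geq c$ for some $c>0$ and all large $n$. Consequently each conditioned mass function equals the unconditioned one multiplied by a factor in $[1,1/c]$.

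The core computation is the likelihood ratio between $d+\Po(\lambda)$ and $\Po(\lambda)$ on the bulk. For integers $j$ with $|j-\lambda|\leq\lambda^{3/5}$, one has
\begin{align*}
\frac{\pr[d+\Po(\lambda)=j]}{\pr[\Po(\lambda)=j]}=\frac{j!}{(j-d)!\,\lambda^d}=\prod_{i=0}^{d-1}\frac{j-i}{\lambda}=1+O(\lambda^{-2/5}),
\end{align*}
since $d$ is fixed and $\lambda=\Theta(n)$. Standard Poisson tail bounds show that each of $\Po(\lambda)$ and $d+\Po(\lambda)$ places mass $1-o(1)$ on this bulk range. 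Combining with the preceding paragraph, the likelihood ratio between the two marginal laws of $|F|$ is bounded by $O(1)$ on a set of probability $1-o(1)$ under either distribution. Substituted into the decomposition of the first paragraph, this immediately yields both directions of the contiguity assertion: if $\pr[\reg_{n+1}^\eps\in\cA_n]\to0$, we split the sum over $j$ into the bulk and its complement, bound the bulk contribution by $O(1)\cdot\pr[\reg_{n+1}^\eps\in\cA_n]$, and bound the tail by $o(1)$; symmetrically in the other direction.

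The only real obstacle is bookkeeping the two distinct conditioning events ({\bf REG3} non-restart and $\{\hat m<dn/k\}$) and confirming that each alters the underlying Poisson density by only a bounded factor on the bulk; once that is in hand the remainder is a routine Poisson likelihood-ratio estimate.
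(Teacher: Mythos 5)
Your proposal is correct and follows the same route the paper takes: condition on the number of constraints, under which the two factor-graph laws coincide, and reduce the claim to mutual contiguity of the two laws of $|F|$, i.e.\ of $\hat m$ and $m=|F(\reg_{n+1}^\eps)|$. The paper simply asserts the contiguity of these two (shifted, truncated) Poisson variables, whereas you spell out the bulk likelihood-ratio estimate $\prod_{i=0}^{d-1}(j-i)/\lambda=1+o(1)$ together with the $\Theta(1)$ conditioning-probability bounds; this is precisely the computation the paper's one-line argument is implicitly relying on.
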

\begin{proof}
Construct a copy of $\reg_{n+1}^\eps$ by generating $m= \Po(d(n+1)/k)$.  Conditioned on $\hat m =m$, the distributions of $\hat \G$ and $\reg_{n+1}^\eps$ are identical, and so the claim follows from the contiguity of the two Poisson variables, $m$ and $\hat m$. 
\end{proof}

\noindent
Obtain $\G'$ from $\hat\G$ by removing $d$ random constraint nodes.

\begin{claim}
\label{claim:reggp}
$\G'$ is distributed as $\reg^\eps_{n+1}$, up to total variation distance $\exp(-\Omega(\eps^2 n))$.
\end{claim}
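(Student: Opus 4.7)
The plan is to show that $\G'$ agrees with a draw from $\reg_{n+1}^\eps$ on an event of probability $1-\exp(-\Omega(\eps^2 n))$, by tracing the coupling step by step and bounding the total variation defects that arise at each step. First I would exploit the exchangeability of the constraint labels of $\hat\G$: since $\hat\G$ conditional on $|F(\hat\G)|=\hat m$ is by construction distributed as $\reg_{n+1}^\eps$ conditional on $|F(\reg_{n+1}^\eps)|=\hat m$, removing $d$ uniformly chosen constraint nodes is the same in distribution as removing the last $d$ constraint nodes. This reduces the claim to two separate comparisons: (a) the conditional distribution of $\G'$ given $|F(\G')|=m'$ against the conditional distribution of $\reg_{n+1}^\eps$ given $|F|=m'$; and (b) the marginal distribution of $|F(\G')|=\hat m-d$ against the marginal of $|F(\reg_{n+1}^\eps)|$.

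For (a) I would invoke the standard fact about the configuration model: projecting a uniformly random maximum matching between variable-clones and the $\sum_{i\le m'+d}|J_i|$ constraint-clones down to the first $m'$ constraints produces a uniformly random matching in the induced model, because the number of extensions to the discarded $d$ constraints depends only on $\sum_{i=m'+1}^{m'+d}|J_i|$ and on the residual variable-clone slot count. The iid weights $\psi_i$ and percolation sets $J_i$ for $i\le m'$ agree in distribution on both sides, so the only discrepancy between the two conditional distributions comes from the two REG3 feasibility events $\sum_{i\le m'+d}|J_i|\le d(n+1)$ and $\sum_{i\le m'}|J_i|\le d(n+1)$. By the Chernoff estimate \eqref{eq:chernoff}, applied with $n+1$ in place of $n$, both events hold with probability at least $1-\exp(-\Omega(\eps^2 n))$ whenever $m',m'+d=\Theta(n)$, so the conditional total variation is at most $\exp(-\Omega(\eps^2 n))$.

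For (b) I would compare the two marginal laws of the number of constraints. The variable $|F(\reg_{n+1}^\eps)|$ has law $\Po(d(n+1)/k)$ conditioned on the REG3 feasibility event, which by \eqref{eq:chernoff} lies within $\exp(-\Omega(\eps^2 n))$ total variation of the unconditioned Poisson. The variable $|F(\G')|=\hat m-d$ has law $\Po(d(n+1)/k)$ conditioned on $k\hat m<dn$; this auxiliary conditioning is imposed so that the removed-constraint graphs $\G'$ and $\G''$ extracted from $\hat\G$ are automatically feasible in their respective parameter ranges. A Poisson/binomial tail bound identifies this conditioning with a $1-\exp(-\Omega(\eps^2 n))$-probability event as well, giving a matching marginal total variation bound. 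Combining (a) and (b) via the triangle inequality for total variation yields the claim.

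The main technical obstacle is the careful bookkeeping around the two feasibility events and the auxiliary conditioning $k\hat m<dn$: one must verify that the joint conditioning introduced by the coupling differs from the REG3 conditioning of $\reg_{n+1}^\eps$ only on a set of probability $\exp(-\Omega(\eps^2 n))$, which requires simultaneous control of the Chernoff tails of $\sum|J_i|$ and of the Poisson $\hat m$ over the relevant range $m=\Theta(n)$.
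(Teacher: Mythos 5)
Your overall strategy---invoke exchangeability of the constraint labels and then appeal to a Chernoff bound---is in the same spirit as the paper's proof, but the paper takes a more direct route. Rather than decomposing the total variation distance into a conditional piece (given the number of constraints) and a marginal piece (on the number of constraints), the paper builds an explicit three-step coupling: first realize $\reg^\eps_{n+1}$ by attaching $m=\Po(d(n+1)/k)$ independently percolated constraints to the $n+1$ variables; then add $d$ further constraints to obtain a copy of $\hat\G$; then remove $d$ uniformly random constraints. By exchangeability of the configuration-model matching, the graph after removal is again distributed as $\reg^\eps_{n+1}$, while by construction it is also distributed as $\G'$. The whole coupling succeeds on the single feasibility event $\mathcal A$ that the total number of active slots is at most $dn$, and \eqref{eq:chernoff} gives $\pr[\mathcal A]\geq 1-\exp(-\Omega(\eps^2 n))$.

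There is, however, a concrete gap in your part (b). You assert that the conditioning event $k\hat m<dn$ in the definition of $\hat\G$ can be ``identified with a $1-\exp(-\Omega(\eps^2 n))$-probability event'' via a Poisson/binomial tail bound. This does not hold. By construction $\hat m$ has the law of $d+\Po(d(n+1)/k)$, so
$$\Erw[\hat m]=d+\frac{d(n+1)}{k}=\frac{dn}{k}+d\Bigl(1+\frac1k\Bigr),$$
which exceeds the cutoff $dn/k$ by the fixed constant $d(1+1/k)$. Since the standard deviation of $\hat m$ is $\Theta(\sqrt n)$, the cutoff lies only $\Theta(1/\sqrt n)$ standard deviations below the mean, and therefore $\pr[k\hat m<dn]\to 1/2$ as $n\to\infty$. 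In particular this conditioning is nowhere near a probability-$(1-\exp(-\Omega(\eps^2 n)))$ event, so the marginal comparison as you have set it up does not yield the exponentially small bound. The paper's coupling sidesteps this because the failure event it needs to control is the genuine feasibility event $\mathcal A$ on $\sum_i|J_i|$, for which the gap between threshold and mean is $\Theta(\eps n)$ rather than $\Theta(1)$, and the role of the auxiliary condition $k\hat m<dn$ is absorbed into the explicit construction rather than handled as a separate conditioning on the Poisson marginal.
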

\begin{proof}
Couple the distributions as follows: Let $m=\Po(d(n+1)/k)$.  Choose $m$ constraints with independent random weight functions from $\Psi $ according to $\rho$, and choose a set of active slots $J$ including each slot with probability $1-\eps$. Randomly attach the active slots of all constraints to the $n+1$ variable nodes uniformly at random conditioned on no variable node having degree more than $d$.  This construction yields $\reg^\eps_{n+1}$ on the event $\mathcal A$ that the total number of active slots is at most $ dn$. Now add $d$ additional random constraint nodes, with random sets of active slots as above, and attach to  variable nodes at random proportion to the deficit of their degrees from $d$.  On the event $\mathcal A$, this yields the distribution $\hat\G$.  Now remove $d$ constraints at random: the constraints remaining are still matched to uniformly random variable nodes, and so the distribution is  that of $\G'$.  This coupling succeeds if $\mathcal A$ holds, and from a similar Chernoff bound to \eqref{eq:chernoff}, $\Pr[ \mathcal A] \ge 1- e^{-\Omega(\eps^2 n)}$. 
\end{proof}

\noindent
Furthermore, obtain $\G''$ from $\hat \G$ as follows.
\begin{itemize}
\item Select a random variable node $\vec x$ of $\hat \G$.
\item Remove $\vec x$ and all constraint nodes adjacent to $\vec x$.
\item Remove $d-d_{\hat\G}(x)$ further random constraint nodes.
\item Remove each remaining constraint node with probability $1/(n+1)$ independently.
\end{itemize}

\begin{claim}
\label{claim:reggpp}
 $ |\Erw[\ln Z_{\G''}]-\Erw[\ln Z_{\reg_n^\eps}]|=O( \eps)$.
\end{claim}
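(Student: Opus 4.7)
The plan is to prove Claim~\ref{claim:reggpp} by constructing an explicit coupling of $\G''$ with a copy of $\reg_n^\eps$ in which the two random factor graphs differ by only $O(\eps)$ constraint nodes in expectation, and then converting this structural closeness into the desired $O(\eps)$ bound on the expected log partition functions by means of the uniform boundedness of the weight functions.

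To build the coupling I would first verify that $\G''$ has the correct macroscopic parameters. By Claim~\ref{claim:hatgconting}, $\hat\G$ is contiguous with $\reg_{n+1}^\eps$, so up to $o(1)$ total variation error the number of constraint nodes of $\hat\G$ is distributed as $d+\Po(d(n+1)/k)$. Removing the $d_{\hat\G}(\vec x)$ constraints adjacent to $\vec x$ together with $d-d_{\hat\G}(\vec x)$ additional random ones leaves a $\Po(d(n+1)/k)$ number of constraints, and the $1/(n+1)$-percolation thins this down to a $\Po(dn/k)$ count, matching the number of constraints in $\reg_n^\eps$. Conditional on the number of constraints and their active-slot sets $(J_i)$, the configuration-model attachment of $\G''$ to the $n$ surviving variables is uniform among max-degree-$d$ matchings, just as in $\reg_n^\eps$. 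Hence, up to the extra $d-d_{\hat\G}(\vec x)$ constraints removed, the distribution of $\G''$ agrees with that of $\reg_n^\eps$.

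The quantitative input is the expected number of extras. By exchangeability, $\Erw[d_{\hat\G}(\vec x)]$ equals the expected average variable-node degree in $\hat\G$, namely $(1-\eps)d+o(1)$, because the total number of active constraint-slots in $\reg_{n+1}^\eps$ concentrates around $(1-\eps)d(n+1)$. Hence $\Erw[d-d_{\hat\G}(\vec x)]=\eps d+o(1)$, so under the coupling $\G''$ is obtained from $\reg_n^\eps$ by removing on average $\eps d+o(1)$ uniformly chosen constraints. Since each $\psi\in\Psi$ satisfies $\delta\leq\psi\leq1/\delta$ for some $\delta=\delta(\Psi)>0$, adding or removing a single constraint can shift $\ln Z$ by at most $O(1)$, and the claim follows.

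The main obstacle I foresee lies in making rigorous the step where I assert that, conditional on its macroscopic parameters, the configuration-model attachment in $\G''$ is uniform in the same sense as in $\reg_n^\eps$. Deleting a variable node from a uniform max-degree-$d$ matching on $n+1$ variables does not automatically yield a uniform max-degree-$d$ matching on $n$ variables, since the conditioning biases the residual degree distribution of the surviving variables. I anticipate handling this via a switching argument on the configuration model together with~(\ref{eq:chernoff}) to ensure the max-degree condition is not binding with high probability throughout, and via the triangle inequality to absorb the $O(1)$ fluctuations in macroscopic quantities into the constant hidden by $O(\eps)$.
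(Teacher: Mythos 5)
Your high-level strategy --- couple $\G''$ with $\reg_n^\eps$ so that they differ on $O(\eps)$ constraints in expectation and then invoke the $O(1)$-Lipschitz dependence of $\ln Z$ on single constraints --- is the same as the paper's. But the coupling itself has a genuine gap, and your quantitative estimate addresses the wrong source of discrepancy.

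You compute $\Erw[d-d_{\hat\G}(\vec x)]=\eps d+o(1)$ and treat $d-d_{\hat\G}(\vec x)$ as ``the extras,'' concluding that $\G''$ is ``obtained from $\reg_n^\eps$ by removing on average $\eps d+o(1)$ uniformly chosen constraints.'' This does not hold up. The total number of constraints removed from $\hat\G$ to form $\G''$ is $d$ (plus percolation) regardless of $d_{\hat\G}(\vec x)$, precisely so that the constraint count matches that of $\reg_n^\eps$; the $d-d_{\hat\G}(\vec x)$ removals are needed to balance the count, not an excess. The actual distributional defect, which the paper flags explicitly at the start of its proof, is that the $d_{\hat\G}(\vec x)$ constraints deleted for being adjacent to $\vec x$ have a \emph{size-biased} degree distribution (in particular none have degree $0$), unlike $d$ constraints chosen uniformly. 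So even when $d_{\hat\G}(\vec x)=d$, which is the generic case you highlight, the degree composition of the removed set is still biased, and conditioning on the ``active-slot sets $(J_i)$'' as you propose silently assumes away exactly the quantity that differs between the two models.

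The paper resolves this by coupling at the level of the constraint \emph{degree sequence}. Let $X''$ be the degree statistics of the $d$ constraints removed in the $\G''$ construction and $X$ the statistics of $d$ constraints drawn uniformly without replacement. On the likely event $\cA$ (where the total number of active slots concentrates around $(1-\eps)dn$), one shows $\pr[X_k=d\mid\cA]=1-O(\eps)$ and $\pr[X_k''=d\mid\cA]=1-O(\eps)$ --- that is, with probability $1-O(\eps)$ all $d$ removed constraints have full degree $k$ in both scenarios --- so $X$ and $X''$ can be coupled to agree with probability $1-O(\eps)$ and to differ in a bounded number of coordinates otherwise. Only after this degree-sequence coupling is fixed does the configuration-model attachment become exchangeable in the way you need, and the residual differences are confined to at most $2d$ constraints. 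Your ``obstacle'' paragraph correctly senses that uniformity of the post-deletion matching is non-trivial, but a switching argument alone won't fix the degree-sequence bias; you need the explicit comparison of $X$ and $X''$, together with (\ref{eq:chernoff}) to control the atypical event, before the Lipschitz step can be applied.
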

\begin{proof}
It is not the case that $\G''$ is distributed exactly as $\reg_n^\eps$: the clauses adjacent to $\vec x$ have a different degree distribution than clauses drawn uniformly from $\hat\G$ (for instances, none of them have degree $0$).  Nevertheless, we will show that the two distributions are close enough that we can use $\G''$ in the Aizenman-Sims-Starr scheme.
We will construct the two factor graphs $\vH,\vH''$ with variable nodes $\{x_1,\ldots,x_n\}$ on the same probability space simultaneously
such that the following properties hold:
 \begin{enumerate}
\item Up to total variation distance $\exp(-\Omega(\eps^2 n))$, $\vH''$ is distributed as $\G''$ and $\vH$ is distributed as $\G_n^\eps$.
\item With probability $1-O(\eps)$ the factor graphs $\vH,\vH''$ are identical.
\item The probability that $\vH,\vH''$ differ on more than $2d$ constraint nodes is $\exp(-\Omega(\eps^2 n))$.
\end{enumerate}
Because the set $\Psi$ of possible weight functions is fixed and all $\psi\in\Psi$ are strictly positive,
we have $\ln Z_{\vH},\ln Z_{\vH''}=O(n)$ with certainty.
For the same reason adding or removing a single constraint can only alter $\ln Z_{\vH},\ln Z_{\vH''}$ by some constant $C$.
Therefore, the assertion is immediate from (i)--(iii).

To construct the coupling, we will first couple the degree sequences of the constraints of $\vH,\vH''$
in such a way that with probability $1-O(\eps)$ the sequences are identical and otherwise they differ in at most $2d$ places.
Formally, let $\hat m=d+\Po(d(n+1)/k)$ and let $\hat k=(\hat k_1,\ldots,\hat k_{\hat m})\in\{0,1,\ldots,k\}^{\hat m}$
be a vector with the same distribution as the vector $(d_{\hat\G}(\hat a_1),\ldots,d_{\hat\G}(\hat a_{\hat m}))$ of constraint degrees of $\hat\G$.
Then (\ref{eq:chernoff}) implies that $\hat k$ is distributed as a sequence of independent $\Bin(k,1-\eps)$ variables, up to 
total variation distance $\exp(-\Omega(\eps^2n))$.
Further, let $X''=( X_i'')_{i=0,1,\ldots,k}$ be distributed as the statistics of the degrees of the $d$
constraint nodes deleted from $\hat\G$ in the above construction of $\G''$ given that
that $d_{\hat\G}(\hat a_j)=\hat k_j$ for all $j$;
that is, $X_i''$ is the number of deleted constraint nodes of degree $i$.
Similarly, let $X=(X_i)_{i=0,1,\ldots,k}$ be the statistics of
$d$ elements of the sequence $\hat k$ chosen uniformly without replacement.

Let $\mathcal A$ be the event that 
	\begin{equation}\label{eqEventcA}
	|\hat m-dn/k|\leq\eps/(dk)\quad\mbox{ and }\quad dn(1-2\eps)\leq\sum_{i=1}^{\hat m}\hat k_i\leq dn.
	\end{equation}
Then by {\bf REG2} and the Chernoff bound we have $\pr\brk{\cA}\ge1- \exp(-\Omega(\eps^2 n))$.
To couple $\vH,\vH''$ on the event $\cA$ we make the following two observations.
	\begin{itemize}
	\item $\pr\brk{X_k=d|\cA}=1-O(\eps)$; this is immediate from (\ref{eqEventcA}).
	\item $\pr\brk{X_k''=d|\cA}=1-O(\eps)$; for (\ref{eqEventcA}) implies that the total number
		of variable nodes adjacent to a constraint node of degree less than $d$ is bounded by $3\eps k n$.
	\end{itemize}
Consequently, on $\cA$ we can couple $X,X''$ such that $\pr[X\neq X'']=O(\eps)$.

If $X=X''$, then we choose $\cD=\cD''\subset[\hat m]$ uniformly at random subject to the condition that
$\sum_{i\in\cD}\vecone\{\hat k_i=j\}=X_j$ for all $j=0,1,\ldots,k$.
Otherwise we choose two independent random sets $\cD,\cD''\subset[\hat m]$ with 
$\sum_{i\in\cD}\vecone\{\hat k_i=j\}=X_j$ and $\sum_{i\in\cD''}\vecone\{\hat k_i=j\}=X_j''$ for all $j$.
Further, with $(\xi_i)_{i\geq1}$ a sequence of $\Be(1/(n+1))$ random variables that are mutually
independent and independent of everything else let
	$$\cE=\{i\in[\hat m]\setminus\cD:\xi_i=1\},\qquad\cE''=\{i\in[\hat m]\setminus\cD'':\xi_i=1\}.$$

Now, obtain the random factor graphs $\vH,\vH''$ as follows.
For $i\in\cE\setminus\cD''$ generate constraint nodes $a_i$ of degree $k_i$ by choosing $\partial_{\vH} a_i=\partial_{\vH''} a_i\subset\{x_1,\ldots,x_n\}$ 
uniformly subject to the condition that all variable degrees remain bounded by $d$.
Subsequently, complete $\vH,\vH''$ independently by choosing $\partial_{\vH}a_i$ for $i\in\cD''\setminus\cD$ and $i\in\cD\setminus\cD''$, respectively,
 conditional on no variable degree exceeding $d$.
Moreover, the weight functions are chosen from the distribution $\rho$ so as to coincide in $\vH,\vH''$ for all $i\in\cE\setminus\cD''$.
Finally, if the event $\cA$ does not occur then we choose $\vH,\vH''$ arbitrarily.

It is immediate from the construction and the fact that $\pr\brk{\cA}\ge1- \exp(-\Omega(\eps^2 n))$ that $\vH,\vH''$ satisfy (1) above.
Furthermore, (2) holds because $\vH=\vH''$ if $X=X''$, which occurs with probability $1-O(\eps)$.
In addition, if $X\neq X''$ and $\cA$ occurs, then $\vH,\vH''$ only differ on the constraints in $|\cD\cup\cD''|\leq2d$ constraint nodes, whence (3) follows.
\end{proof}

\noindent
From Claims \ref{claim:reggp} and \ref{claim:reggpp} it follows that 
\begin{align}\label{eqZreg}
\Erw\brk{\ln\frac{Z_{\reg^\eps_{n+1}}}{Z_{\reg^\eps_{n}}}}  &= \Erw\brk{\ln\frac{Z_{\G'}}{Z_{\G''}}} +O(\eps) .
\end{align}

\noindent
Let us define $S_1(x),S_2(x),S_3(x),S_4(a)$ exactly as in (\ref{eqS1})--(\ref{eqS4}) (with the current $\hat\G$).

\begin{lemma}\label{Claim_regG'}
Let $A'=F(\hat\G)\setminus F(\G')$ be the set of constraint nodes of $\hat\G$ that were deleted to obtain $\G'$.
Then \whp
	\begin{align*}
	\ln\frac{Z_{\hat\G}}{Z_{\G'}}&=o(1)+\sum_{a\in A'}S_4(a).
	\end{align*}
\end{lemma}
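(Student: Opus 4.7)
The plan is to mirror the proof of Lemma~\ref{Claim_eqRSBethe1} from the Poisson case, writing $\ln(Z_{\hat\G}/Z_{\G'})$ as a telescoping sum over the $d$ deleted constraints in $A'$ added back to $\G'$ one at a time. Label these constraints $a_1,\ldots,a_d$ and set $\G'_0=\G'$ and $\G'_i=\G'_{i-1}+a_i$, so that $\G'_d=\hat\G$ and $\ln(Z_{\hat\G}/Z_{\G'})=\sum_{i=1}^d\ln(Z_{\G'_i}/Z_{\G'_{i-1}})$. For each step we use the identity
\begin{align*}
\frac{Z_{\G'_i}}{Z_{\G'_{i-1}}}&=\sum_{\sigma\in\Omega^{\partial a_i}}\psi_{a_i}(\sigma)\bck{\vecone\{\forall y\in\partial a_i:\SIGMA(y)=\sigma(y)\}}_{\mu_{\G'_{i-1}}}.
\end{align*}
The goal is to show this ratio is $o(1)+\exp(S_4(a_i))$ whp.

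The first ingredient is that $\G'$ is $(o(1),2)$-symmetric whp. This follows from Claim~\ref{claim:reggp}, which says $\G'$ is distributed as $\reg^\eps_{n+1}$ up to total variation distance $\exp(-\Omega(\eps^2 n))$, combined with the hypothesis~(\ref{eqregRS}). Since $d=O(1)$, iteratively applying Lemma~\ref{Lemma_cavityRS} (which applies to weight functions in $\Psi^*$) shows that each $\G'_{i-1}$ retains $(o(1),2)$-symmetry whp; Corollary~\ref{Cor_states} then upgrades this to $(o(1),k)$-symmetry.

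The main obstacle is to exploit $(o(1),k)$-symmetry to factor the joint probability $\bck{\vecone\{\forall y\in\partial a_i:\SIGMA(y)=\sigma(y)\}}_{\mu_{\G'_{i-1}}}$ as $\prod_{y\in\partial a_i}\mu_{\G'_{i-1},y}(\sigma(y))+o(1)$. The subtlety, compared to the Poisson case, is that $\partial a_i$ is not a uniformly random tuple but rather generated by the configuration model conditioned on no variable degree exceeding $d$. I would handle this exactly as in the proof of Theorem~\ref{Thm_regRSBP}: by the Chernoff bound~(\ref{eq:chernoff}) and the fact that only $O(1)$ constraints have been removed from $\reg^\eps_{n+1}$ to build $\G'$ (then added back one by one), with probability $1-o(1)$ the set $R$ of variable nodes with deficient degree has size $\Omega(\eps n)$ and the attachment distribution is within a factor of $d$ of uniform on $R$. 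Hence $(o(1),k)$-symmetry implies the desired factorization up to $o(1)$ error, possibly after shrinking the symmetry parameter by a factor of $d^{O(k)}$.

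Finally, I would apply Lemma~\ref{Lemma_cavityRS} once more to compare $\G'_{i-1}$ with $\hat\G-a_i$: these two factor graphs differ by only $d-i<d=O(1)$ constraints in $\Psi^*$, so $\sum_{y\in V}\TV{\mu_{\G'_{i-1},y}-\mu_{\hat\G,y\to a_i}}=o(n)$. Substituting into the factorized expression yields
\begin{align*}
\frac{Z_{\G'_i}}{Z_{\G'_{i-1}}}&=o(1)+\sum_{\sigma\in\Omega^{\partial a_i}}\psi_{a_i}(\sigma)\prod_{y\in\partial a_i}\mu_{\hat\G,y\to a_i}(\sigma(y))=o(1)+\exp(S_4(a_i)),
\end{align*}
after taking logarithms and absorbing the additive $o(1)$ (here using that all $\psi\in\Psi$ are bounded away from $0$, so the right-hand side is $\Omega(1)$). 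Summing the resulting $o(1)$ errors over the constant $d=|A'|$ values of $i$ gives the claim.
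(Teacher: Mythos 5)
Your proposal is correct and follows essentially the same route as the paper: a telescoping decomposition $\ln(Z_{\hat\G}/Z_{\G'})=\sum_i\ln(Z_{\G'_i}/Z_{\G'_{i-1}})$, the ratio identity expressing each increment as a Gibbs average, $(o(1),2)$-symmetry of $\G'$ via Claim~\ref{claim:reggp} and~(\ref{eqregRS}), iterated application of Lemma~\ref{Lemma_cavityRS} and Corollary~\ref{Cor_states} to propagate symmetry and pass from $\mu_{\G'_{i-1},y}$ to the messages $\mu_{\hat\G,y\to a_i}$, then logarithms. You are slightly more explicit than the paper about why the configuration-model attachment distribution on the deficient-degree set $R$ (which has size $\Omega(\eps n)$ whp) is within a bounded factor of uniform, a point the paper compresses into the observation that $\G'$ has at least $\eta n$ deficient-degree variables whp; that added detail is a faithful unpacking, not a deviation.
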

\begin{proof}
We mimic the proof of \Lem~\ref{Claim_eqRSBethe1}.
Let $\eta=\eta(\eps)>\delta=\delta(\eta)>0$ be small enough and assume that $n>n_0(\delta)$ is sufficiently large.
Instead of thinking of $\G'$ as being obtained from $\hat\G$ by removing $d$ random constraints, we can think of $\hat\G$
as being obtained from $\G'$ by adding $d$ random constraint nodes $a_1,\ldots,a_d$.
More precisely, let $\G_0'=\G'$ and $\G_i'=\G'_{i-1}+a_i$ for $i\in[d]$,
where $\psi_{a_i}$ is chosen according to {\bf REG2} and $\partial a_i$ is chosen uniformly at random
subject to the condition that no variable ends up with degree greater than $d$.
Then we can identify $\hat\G$ with $\G_d'$.
\Whp\ the random factor graph $\G'$ contains at least $\eta n$ variable nodes $x$ such that $d_{\G'}(x)<d$.
Therefore, Claim~\ref{claim:reggp}, assumption (\ref{eqRS}), \Lem~\ref{Lemma_cavityRS} and \Cor~\ref{Cor_states} imply that for every $i\in[d]$,
	\begin{align*}
	\pr\brk{\sum_{\tau\in\Omega^k}\abs{\bck{\vecone\{\forall y\in\partial a_i:\SIGMA(y)=\tau(y)\}}_{\mu_{\G'_{i-1}}}-
			\prod_{y\in\partial a_i}\mu_{\G'_{i-1},y}(\tau(y))}\geq\delta }&=o(1).
	\end{align*}
In addition, (\ref{eqLemma_cavityRS}) yields
	\begin{align*}
	\pr\brk{\forall i\in[d]:
		\sum_{\tau\in\Omega^k}\abs{\prod_{y\in\partial a_i}\mu_{\hat\G,y\to a_i}(\tau(y))-
			\prod_{y\in\partial a_i}\mu_{\G'_{i-1},y}(\tau(y))}\geq\delta }&=o(1).
	\end{align*}
Hence, \whp\ for all $i\in[d]$ simultaneously,
	\begin{align}\label{eqClaim_Claim_regG'_2}
	\sum_{\tau\in\Omega^k}\abs{\bck{\vecone\{\forall y\in\partial a_i:\SIGMA(y)=\tau(y)\}}_{\mu_{\G'_{i-1}}}-
			\prod_{y\in\partial a_i}\mu_{\hat\G,a_i\to y}(\tau(y))}<\delta
	\end{align}
As 
	$Z_{\G'_i}/Z_{\G'_{i-1}}=\sum_{\sigma\in\Omega^{\partial a_i}}\psi_{a_i}(\sigma)
	\bck{\vecone\{\forall y\in\partial a_i:\SIGMA(y)=\sigma(y)\}}_{\mu_{\G'_{i-1}}}$
for all $i\in[d]$,  (\ref{eqClaim_Claim_regG'_2}) implies that \whp
	\begin{align*}
	\abs{\frac{Z_{\G'_i}}{Z_{\G'_{i-1}}}-
		\sum_{\sigma\in\Omega^{\partial a_i}}\psi_{a_i}(\sigma)\prod_{y\in\partial a_i}\mu_{\hat\G,a_i\to y}(\sigma(y))}
		<\eta.
	\end{align*}
The assertion follows by taking logarithms and summing.
\end{proof}

\begin{lemma}\label{Claim_regG''}
Let $A''=F(\hat\G)\setminus(F(\G'')\cup\partial_{\hat\G}\vec x)$.
Then \whp\
	\begin{align*}
	\ln\frac{Z_{\hat\G}}{Z_{\G''}}&=S_1(\vec x)+S_2(\vec x)+S_3(\vec x)+\sum_{a\in A''}S_3(a)+o(1),
	\end{align*}
\end{lemma}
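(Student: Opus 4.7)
The plan is to decompose the passage from $\hat\G$ to $\G''$ into two stages that mirror the Poisson arguments. Write $\G^\star=\hat\G-\vec x$ for the factor graph obtained from $\hat\G$ by deleting $\vec x$ together with the $d_{\hat\G}(\vec x)$ constraints in $\partial_{\hat\G}\vec x$. Then $\G''$ is obtained from $\G^\star$ by removing exactly the constraints in $A''$, so
\begin{align*}
\ln\frac{Z_{\hat\G}}{Z_{\G''}}\ =\ \ln\frac{Z_{\hat\G}}{Z_{\G^\star}}+\ln\frac{Z_{\G^\star}}{Z_{\G''}}.
\end{align*}
We treat the two pieces separately.

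For the first piece, the argument of \Lem~\ref{Claim_eqRSBethe2a} and \Cor~\ref{Claim_eqRSBethe2} transfers almost verbatim. By hypothesis (\ref{eqregRS}) and Claim~\ref{claim:hatgconting}, $\mu_{\hat\G}$ is $(o(1),2)$-symmetric \whp; applying \Lem~\ref{Lemma_cavityRS} backwards (viewing $\hat\G$ as $\G^\star$ plus $d_{\hat\G}(\vec x)\leq d$ constraints) yields the same symmetry for $\G^\star$, and then \Cor~\ref{Cor_states} gives $(o(1),kd)$-symmetry.  Since $\partial_{\hat\G}\vec x$ consists of random constraints whose remaining slots go to nearly uniform variable nodes, the joint distribution of the neighbors factorizes, and the identity $Z_{\hat\G}/Z_{\G^\star}=\sum_{\tau\in\Omega^U}\prod_{a\in\partial_{\hat\G}\vec x}\psi_a(\tau(\partial a))\cdot\bck{\vecone\{\SIGMA|_{U\setminus\vec x}=\tau|_{U\setminus\vec x}\}}_{\mu_{\G^\star}}$ (for $U=\bigcup_{a\in\partial_{\hat\G}\vec x}\partial a$) produces, after invoking the $o(1)$-approximate Belief Propagation fixed-point property established in \Thm~\ref{Thm_regRSBP}, exactly
\begin{align*}
\ln\frac{Z_{\hat\G}}{Z_{\G^\star}}\ =\ S_1(\vec x)+S_2(\vec x)+S_3(\vec x)+o(1).
\end{align*}

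For the second piece, first observe that
$|A''|=(d-d_{\hat\G}(\vec x))+\Bin(\hat m-d,1/(n+1))$ has mean $O(1)$, so for any target accuracy we may fix $L=L(\eps)$ with $\pr[|A''|>L]<\eps$ and condition on $|A''|\leq L$. Enumerate $A''=\{a_1,\ldots,a_{|A''|}\}$ and set $\G^\star_0=\G''$, $\G^\star_i=\G^\star_{i-1}+a_i$, so that $\G^\star_{|A''|}=\G^\star$. Repeated application of \Lem~\ref{Lemma_cavityRS} (each step adds one constraint, and there are at most $L$ of them) propagates $(o(1),k)$-symmetry through every $\G^\star_i$. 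Because each $a_i$ has its slots attached to variables that are approximately uniform (up to the bounded-degree correction already controlled in the proof of \Thm~\ref{Thm_regRSBP}), the same estimate as in \Lem~\ref{Claim_regG'} gives
\begin{align*}
\ln\frac{Z_{\G^\star_i}}{Z_{\G^\star_{i-1}}}\ =\ \ln\Big[\textstyle\sum_{\sigma\in\Omega^{\partial a_i}}\psi_{a_i}(\sigma)\prod_{y\in\partial a_i}\mu_{\hat\G,y\to a_i}(\sigma(y))\Big]+o(1)\ =\ S_4(a_i)+o(1),
\end{align*}
where we used (\ref{eqLemma_cavityRS}) to replace marginals in $\G^\star_{i-1}$ by the corresponding messages from $\hat\G$. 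Summing over $i$ yields $\ln(Z_{\G^\star}/Z_{\G''})=\sum_{a\in A''}S_4(a)+o(1)$ (interpreting the $S_3(a)$ in the statement as $S_4(a)$, the constraint-node contribution).

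The main technical obstacle is that, unlike in the Poisson proof, adding or removing constraints in a regular model is not independent of the rest of the graph: the variable-degree cap interferes with the uniform placement of slots. This is precisely where the $\eps$-percolation in {\bf REG1}--{\bf REG3} buys us slack, as already exploited in the proof of \Thm~\ref{Thm_regRSBP}; the Chernoff estimate (\ref{eq:chernoff}) guarantees enough ``free'' slots that each of the $O(1)$ many added constraints may be placed approximately uniformly, which is what makes Lemmas~\ref{Lemma_cavityRS} and \ref{Lemma_switch} applicable at every step of the telescoping above.
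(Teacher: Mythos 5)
Your two-stage decomposition $\hat\G \to \G^\star \to \G''$ (first peeling off $\vec x$ together with $\partial_{\hat\G}\vec x$, then the remaining constraints $A''$) is a genuinely different organization from the paper's proof, which instead reconstitutes $\hat\G$ from $\G''$ by adding $\vec x$ and \emph{all} $X+Y$ missing constraints at once, writes down the resulting ratio $Z_{\hat\G}/Z_{\G''}$ in a single formula (\ref{eqClaim_regG''_3}), and then separates the contributions of the constraints touching $\vec x$ from those that do not by a purely algebraic factorization before invoking the approximate BP fixed-point property. Both routes use exactly the same toolkit (\Lem~\ref{Lemma_cavityRS}, \Cor~\ref{Cor_states}, Claim~\ref{claim:reggpp}, \Thm~\ref{Thm_regRSBP}), and your version is arguably more modular since each stage literally mirrors a lemma already proved in the Poisson case; the paper's version is slightly more economical because it needs the symmetry argument only once and handles the separation algebraically. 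You also correctly spotted that the $S_3(a)$ in the statement must be read as $S_4(a)$, which is indeed a typo.

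There is, however, one step in your Stage~1 that is not licensed as written: ``applying \Lem~\ref{Lemma_cavityRS} backwards.'' That lemma transfers $(\eps,2)$-symmetry from $G$ to $G^+=G+L$ constraints; it does \emph{not}, as stated, give symmetry of $G$ from that of $G^+$, and its proof is not manifestly reversible since it applies \Thm~\ref{Thm_decomp} to $\mu_{G^+}$ and then \Cor~\ref{Cor_states2} under the hypothesis that $\mu_G$ is symmetric. So deducing symmetry of $\G^\star$ from symmetry of $\hat\G$ this way is a gap. Fortunately the fix is already contained in your Stage~2: by Claim~\ref{claim:reggpp}, $\G''$ is close in distribution to $\reg_n^\eps$, hence $(o(1),2)$-symmetric under (\ref{eqregRS}), and then \Lem~\ref{Lemma_cavityRS} applied in the \emph{forward} direction to $\G^\star = \G'' + A''$ (with $|A''|=O(1)$ \whp) gives the symmetry of $\G^\star$ that Stage~1 needs. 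With that replacement your argument is complete.
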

\begin{proof}
Given $\del>0$, let $\gamma=\gamma(\eps,\del)>\eta=\eta(\gamma)>0$ be small enough and assume that $n>n_0(\gamma)$ is sufficiently large.
We can think of $\hat\G$ 
as being obtained from $\G''$ by adding a new variable node $\vec x$, $X\leq d$ random constraint nodes $a_1,\ldots,a_X$ such that $x\in\partial a_i$ for all $i$
and another $Y$ random constraint nodes $a_{X+1},\ldots,a_{X+Y}$ such that $\vec x\not\in\partial a_i$ for $i>X$.
Let $U=\bigcup_{i\leq X+Y}\partial a_i$.
Since $\G''$ has at least $\gamma n$ variables of degree less than $d$ \whp,
Claim~\ref{claim:reggpp}, (\ref{eqRS}), \Lem~\ref{Lemma_cavityRS} and \Cor~\ref{Cor_states} imply that
	\begin{align}\label{eqClaim_regG''_2}
	\pr\brk{\sum_{\tau\in\Omega^{U\setminus\{\vec x\}}}\abs{\bck{\vecone\{\forall y\in U\setminus\{\vec x\}:
		\SIGMA(y)=\tau(y)\}}_{\mu_{\G''}}-
			\prod_{i=1}^X\prod_{y\in\partial a_i\setminus\vec  x}\mu_{\hat\G,y\to a_i}(\tau(y))}\geq\eta}&=o(1).
	\end{align}
As it is immediate from (\ref{eqGibbs}) that
	\begin{align*}
	\frac{Z_{\hat\G}}{Z_{\G''}}&=\sum_{\tau\in\Omega^U}\bck{\vecone\{\forall y\in U\setminus\{x\}:\SIGMA(y)=\tau(y)\}}_{\G''}
		\prod_{i=1}^{X+Y}\psi_{a_i}(\tau(\partial a_i)),
	\end{align*}
(\ref{eqClaim_regG''_2}) shows that \whp
	\begin{align}\label{eqClaim_regG''_3}
	\abs{\frac{Z_{\hat\G}}{Z_{\G''}}-\sum_{\tau\in\Omega^U}
		\prod_{i=1}^{X+Y}\brk{\psi_{a_i}(\tau(\partial a_i))\prod_{y\in\partial a_i\setminus\vec x}\mu_{\hat\G,y\to a_i}(\tau(y))}
		}&<\gamma.
	\end{align}
To complete the proof, we observe that
		\begin{align*}
		\sum_{\tau\in\Omega^U}\prod_{i=1}^{X+Y}\brk{\psi_{a_i}(\tau(\partial a_i))\prod_{y\in\partial a_i\setminus\vec x}
			\mu_{\hat\G,y\to a_i}(\tau(y))}&=
			\brk{\prod_{i=X+1}^{X+Y}\exp(S_4(a_i))}\\
	&\qquad\cdot	\sum_{\tau(\vec x)\in\Omega}\prod_{i=1}^X
		\brk{\sum_{\tau\in\Omega^{\partial a_i\setminus\vec x}}\psi_{a_i}(\tau(\partial a_i))\prod_{y\in\partial a_i\setminus\vec x}
			\mu_{\hat\G,y\to a_i}(\tau(y))}.
	\end{align*}
Hence, plugging this equation into (\ref{eqClaim_regG''_3}) and taking logarithms, we obtain
	\begin{align}\label{eqClaim_regG''_4}
	\pr\brk{\abs{\ln\frac{Z_{\hat\G}}{Z_{\G''}}-S-\sum_{i=X+1}^YS_4(a_i)}<\delta_1}&>1-2\delta_1,\qquad\mbox{where}\\
	S&=\ln\sum_{\tau(x)\in\Omega}\prod_{i=1}^X\brk{\sum_{\tau\in\Omega^{\partial a_i\setminus x}}
			\psi_{a_i}(\tau(\partial a_i))\prod_{y\in\partial a_i\setminus x}\mu_{\hat\G,y\to a_i}(\tau(y))}.\nonumber
	\end{align}
Finally, by Claim~\ref{claim:hatgconting} and \Thm~\ref{Thm_regRSBP}
the messages $(\mu_{\hat\G,\nix\to \nix})$ are an $o(1)$-approximate Belief Propagation fixed point \whp\
Therefore, the calculations (\ref{eqClaim_eqRSBethe2_6})--(\ref{eqClaim_eqRSBethe2_7}) go through and show that
 $|S-(S_1(\vec x)+S_2(\vec x)+S_3(\vec x))|<\del_1$ \whp
\end{proof}

\begin{lemma}\label{Claim_final}
\Whp\ we have 
	\begin{equation}\label{eqfinal}
	\Erw\brk{\ln\frac{Z_{\G'}}{Z_{\G''}}\bigg|\hat\G}=(n+1)^{-1}\cB_{\hat\G}+O(\eps).
	\end{equation}
\end{lemma}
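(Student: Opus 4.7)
The approach is to subtract Lemmas~\ref{Claim_regG'} and~\ref{Claim_regG''}: writing $\ln(Z_{\G'}/Z_{\G''}) = \ln(Z_{\hat\G}/Z_{\G''}) - \ln(Z_{\hat\G}/Z_{\G'})$, we obtain whp
\begin{equation*}
\ln\frac{Z_{\G'}}{Z_{\G''}} = S_1(\vec x) + S_2(\vec x) + S_3(\vec x) + \sum_{a\in A''}S_4(a) - \sum_{a\in A'}S_4(a) + o(1).
\end{equation*}
The goal is then to evaluate $\Erw[\,\cdot\,|\hat\G]$ of each term and show the result equals $(n+1)^{-1}\cB_{\hat\G}+O(\eps)$.

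Since $\vec x$ is uniform on $V(\hat\G)$, we immediately get $\Erw[S_i(\vec x)|\hat\G] = (n+1)^{-1}\sum_x S_i(x)$ for $i=1,3$, supplying two of the three sums in $(n+1)^{-1}\cB_{\hat\G}$. For $S_2$, swap sums to get $\sum_x S_2(x) = \sum_a d_{\hat\G}(a)\ln\brk{\sum_\tau\psi_a(\tau)\prod_{y\in\partial a}\mu_{\hat\G,y\to a}(\tau(y))}$. By \Thm~\ref{Thm_regRSBP} combined with Claim~\ref{claim:hatgconting} (mutual contiguity of $\hat\G$ and $\reg_{n+1}^\eps$), the messages $\mu_{\hat\G,\nix\to\nix}$ form an $o(1)$-approximate Belief Propagation fixed point whp, so the bracketed logarithm coincides with $S_4(a)$ up to $o(1)$ on average over $a$. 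Hence $\Erw[S_2(\vec x)|\hat\G] = (n+1)^{-1}\sum_a d_{\hat\G}(a)S_4(a) + o(1)$ whp.

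It remains to analyse the $A'$ and $A''$ sums. Since $A'$ is a uniformly random $d$-subset of $F(\hat\G)$, $\Pr[a\in A'|\hat\G] = d/\hat m$, and $\hat m = d(n+1)/k + O_P(\sqrt n)$ by the Poisson concentration underlying Claim~\ref{claim:hatgconting}. For $A''$, decomposing the three-stage deletion gives
\begin{equation*}
\Pr[a\in A''|\hat\G] = \Erw_{\vec x}\brk{\vecone\{\vec x\notin\partial a\}\bc{\frac{d-d_{\hat\G}(\vec x)}{\hat m - d_{\hat\G}(\vec x)} + \frac{\hat m - d}{\hat m - d_{\hat\G}(\vec x)}\cdot\frac{1}{n+1}}}.
\end{equation*}
By the {\bf REG2} construction and the Chernoff bound \eqref{eq:chernoff}, $\sum_x(d-d_{\hat\G}(x)) = d(n+1) - \sum_a d_{\hat\G}(a) = O(\eps n)$ whp, so $\Erw_{\vec x}[d-d_{\hat\G}(\vec x)|\hat\G] = O(\eps d)$ whp; combined with $\hat m \approx d(n+1)/k$ this yields $\Pr[a\in A''|\hat\G] = (n+1)^{-1} + O(\eps/n)$, while $\Pr[a\in A'|\hat\G] = k/(n+1) + o(1/n)$.

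Assembling, the coefficient of each $S_4(a)$ in $\Erw[\ln(Z_{\G'}/Z_{\G''})|\hat\G]$ becomes $(d_{\hat\G}(a) + 1 - k)/(n+1) + O(\eps/n)$. Splitting off the desired $1/(n+1)$ leaves a residual $(n+1)^{-1}\sum_a S_4(a)(d_{\hat\G}(a)-k) + O(\eps)$. Since positivity of $\Psi$ gives $|S_4(a)| = O(1)$ uniformly and since $\sum_a(k-d_{\hat\G}(a)) = O(\eps n)$ whp, this residual is $O(\eps)$, completing the identification with $(n+1)^{-1}\cB_{\hat\G}+O(\eps)$. The main technical hurdle is the bookkeeping for $A''$: the $\eps$-percolation is essential here, since it both provides the slack that makes the second-stage removals $(d-d_{\hat\G}(\vec x))$ nontrivial in expectation and, via Chernoff concentration, furnishes the $O(\eps n)$ control on the constraint-degree deficit that turns the per-constraint coefficient mismatch into a total $O(\eps)$ error rather than an $O(1)$ one.
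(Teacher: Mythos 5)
Your proof is correct and takes essentially the same route as the paper: subtract Lemmas~\ref{Claim_regG'} and~\ref{Claim_regG''}, take $\Erw[\,\cdot\,|\hat\G]$ using the uniformity of $\vec x$ and the deletion probabilities for $A'$ and $A''$, and then show that the resulting per-constraint $S_4$-coefficients reduce to $(n+1)^{-1}$ up to an $O(\eps)$ total error via the $O(\eps n)$ bound on the aggregate constraint-degree deficit (the paper organizes the same arithmetic slightly differently in (\ref{eqFinal2})--(\ref{eqFinal3}), grouping by expected cardinalities of $A',A''$, but the computation is the same). One minor simplification you could make: the relation $S_2(x)=\sum_{a\in\partial x}S_4(a)$ is an \emph{exact} identity from the definitions (\ref{eqS1})--(\ref{eqS4}), so the appeal to \Thm~\ref{Thm_regRSBP} and Claim~\ref{claim:hatgconting} at that step is superfluous.
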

\begin{proof}
Let $\hat m$ be the number of constraint nodes of $\hat\G$.
Combining Claims~\ref{Claim_regG'} and~\ref{Claim_regG''}, we obtain
	\begin{align}\label{eqFinal1}
	\Erw\brk{\ln\frac{Z_{\G'}}{Z_{\G''}}\bigg|\hat\G}&=o(1)+\sum_{i=1}^{n+1}\frac{S_1(x_i)+S_2(x_i)+S_3(x_i)}{n+1}
		+\brk{-d+\frac{\hat m-d}{n+1}+\frac1{n+1}\sum_{i=1}^{n+1}(d-d_{\hat\G}(x_i))}
			\sum_{a\in F(\hat\G)}\frac{S_4(a)}{\hat m}.
	\end{align}
Since $\hat m=\Po(d(n+1)/k)+d$, \whp\ we have
	\begin{align}\label{eqFinal2}
	\hat m^{-1}\bc{-d+\frac{\hat m-d}{n+1}+\frac1{n+1}\sum_x(d-d_{\hat\G}(x))}&=
		 -\frac{kd(1-1/k)}{d(n+1)}+\frac{d\eps(1-\eps)}{\hat m}=-\frac{k-1}{n+1}+O(\eps).
	\end{align}
On the other hand, 
in the sum $(n+1)^{-1}\sum_{i=1}^{n+1}S_2(x_i)$
all but an $O(\eps)$-fraction of the constraint nodes appear $k$ times.
Thus, \whp
	\begin{align}\label{eqFinal3}
	\frac1{n+1}\sum_{i=1}^{n+1}S_2(x_i)&=O(\eps)+\frac k{n+1}\sum_{a\in F(\hat\G)}S_4(a).
	\end{align}
Finally, plugging (\ref{eqFinal2}) and (\ref{eqFinal3}) into (\ref{eqFinal1}),  we obtain (\ref{eqfinal}).
\end{proof}

\noindent
To complete the proof of \Cor~\ref{Thm_regRSBethe} we take $\eps\to 0$ slowly.
We begin with the following observation.

\begin{claim}\label{Claim_Lipshitz}
We have $\frac1n\Erw[\ln Z_{\reg_n}] = \frac{1}{n} \Erw[\ln Z_{\reg_n^\eps}] + O(\eps).$
\end{claim}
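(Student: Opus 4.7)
The plan is to couple $\reg_n$ with $\reg_n^\eps$ through an intermediate factor graph $\reg_n^*$ obtained from $\reg_n$ by randomly erasing slots. The workhorse is a \emph{Lipschitz observation}: since $\Psi$ is finite and every $\psi\in\Psi$ is strictly positive on the finite domain $\Omega^k$, there exist constants $0<c\le C$ depending only on $\Psi$ with $c\le\psi^J(\sigma)\le C$ for every $\psi\in\Psi$, $J\subseteq[k]$, and $\sigma$. Writing $Z_{\mathrm{new}}/Z_{\mathrm{old}}$ as an expectation of a bounded ratio of $\psi$'s under the original Gibbs measure then shows that adding, removing, or altering a single constraint in a factor graph with weights in $\Psi\cup\Psi^*$ changes $\ln Z$ by at most $\ln(C/c)=O(1)$.

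The first step is to define $\reg_n^*$ by coupling with $\reg_n$: generate $\reg_n$, and then independently for each constraint $a_i$ and each slot $j\in[k]$, declare $j$ inactive with probability $\eps$; replace $\psi_{a_i}$ by $\psi_{a_i}^{J_i}$ where $J_i\subseteq[k]$ is the active set, and delete the edge from $a_i$ to the variable at slot $j$ whenever $j\notin J_i$. The total number of erasures $E$ has distribution $\Bin(km',\eps)$ with $m'=\lfloor dn/k\rfloor$, so $\Erw E=\eps dn+O(1)$. Applying the Lipschitz observation one erasure at a time yields $|\ln Z_{\reg_n}-\ln Z_{\reg_n^*}|=O(E)$ almost surely, and hence $|\Erw\ln Z_{\reg_n}-\Erw\ln Z_{\reg_n^*}|=O(\eps n)$.

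The second step is to compare $\reg_n^*$ with $\reg_n^\eps$. Both models share iid weights $\psi_i\sim\rho$ and iid slot patterns $J_i$ (each slot active with probability $1-\eps$), and they differ only in the number of constraints ($m'=\lfloor dn/k\rfloor$ versus $m\sim\Po(dn/k)$ conditioned on $\sum_i|J_i|\le dn$) and in the law of the adjacency. Working in the configuration model of~\cite[\Chap~9]{JLR}, the plan is to argue that in both models, given the active constraint clones, the induced bipartite graph is a uniformly random injection from the $\sum_i|J_i|$ active constraint clones into the $dn$ variable clones: for $\reg_n^*$ this follows from the exchangeability of the uniform perfect matching underlying $\reg_n$ together with the independence of the erasures; for $\reg_n^\eps$ it is immediate from \textbf{REG3}. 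I will therefore couple $m$ and $m'$ so that $\Erw|m-m'|=O(\sqrt n)$ and share the constraint data together with the induced adjacency on the first $\min(m,m')$ constraints; the $|m-m'|$ extra constraints on the larger side contribute $O(1)$ each by the Lipschitz observation, for an expected difference of $O(\sqrt n)=o(\eps n)$. The hard part will be carefully justifying the adjacency coupling: namely, that restricting a uniform perfect matching on the $\reg_n$ configuration-model clone set to a random independent subset of constraint clones yields the same law on partial matchings as the $\reg_n^\eps$ configuration model, while handling the conditioning $\sum_i|J_i|\le dn$ and the edge case $k\nmid dn$. Combining the two steps gives $|\Erw\ln Z_{\reg_n}-\Erw\ln Z_{\reg_n^\eps}|\le O(\eps n)+O(\sqrt n)$; dividing by $n$ proves the claim.
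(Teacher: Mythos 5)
Your proposal is correct, and it reaches the claim by a genuinely different route than the paper. The paper's proof builds a common core $G_0$ consisting of $\Po((1-\eps)^k dn/k)$ degree-$k$ constraints, and then obtains both $\reg_n$ and $\reg_n^\eps$ from $G_0$ by \emph{adding} $O(\eps n)$ constraints: the extra full-degree constraints to reach $\lfloor dn/k\rfloor$ on one side, and independent Poisson numbers of degree-$j$ constraints ($j<k$) on the other. Two applications of the $O(1)$-per-constraint Lipschitz bound then give the claim directly, with no distributional identity to verify beyond the construction of $G_0$ being consistent with both models. Your proof instead produces an intermediate $\reg_n^*$ by percolating the slots of $\reg_n$, pays $O(\eps n)$ Lipschitz steps to pass from $\reg_n$ to $\reg_n^*$, and then pays a further $O(\sqrt n)=o(n)$ for the constraint-count mismatch — but this requires the extra ingredient that $\reg_n^*$ and $\reg_n^\eps$ (conditioned on matching constraint data) have identical adjacency laws. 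That ingredient, which you correctly flag as the delicate step, does hold: in the configuration model the restriction of a uniform injection of clones to an independently chosen subset of the domain is again a uniform injection, and the conditioning event $\sum_i|J_i|\le dn$ is automatic on the $\reg_n^*$ side and fails only with probability $\exp(-\Omega(\eps^2 n))$ on the $\reg_n^\eps$ side, so it costs nothing. The trade-off is that your route gives a cleaner conceptual picture of $\reg_n^\eps$ as a genuinely percolated $\reg_n$, at the price of the configuration-model restriction lemma; the paper's route avoids that lemma entirely by only ever \emph{adding} constraints to a shared base graph. Both hinge on the same elementary Lipschitz property of $\ln Z$ derived from the strict positivity and finiteness of $\Psi$, so the quantitative conclusion $O(\eps)$ per variable is the same.
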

\begin{proof}
We recall the following Lipschitz property, which is immediate from (\ref{eqGibbs}): if a factor graph $G'$ is obtained from another factor graph $G$
by adding or removing a single constraint node, then $|\ln Z_G-\ln Z_{G'}|\leq C$ for some fixed number $C=C(\Psi)$. We can couple $\reg_n$ and $\reg_n^\eps$ by forming $G_0$ by choosing $m^\prime= \Po((1-\eps)^k dn/k)$ random constraints, joined at random to variable nodes so that no variable node has degree more than $d$.  To form $\reg_n$ from $G_0$ we add $\lfloor dn/k \rfloor - m^\prime$ additional random constraints; with probability $1-e^{-\Omega(\eps^2 n)}$ the number of additional constraints is $O(\eps n)$. To form $\reg_n^\eps$ from $G_n$, we add $\Po( \binom {k}{j} (1-\eps)^j \eps^{k-j} dn/k)$ random constraints with degree $j$, for $j=1, \dots k-1$.  Again with probability $1 - e^{-\Omega(\eps^2 n)}$ the total number of additional constraints is $O( \eps n)$.  Applying the Lipschitz property twice gives the claim.
\end{proof}

\begin{proof}[Proof of \Thm~\ref{Thm_regRSBethe}]
Let $X=|F(\G')\triangle F(\G'')|$ be the number of constraint nodes in which $\G',\G''$ differ.
As in the previous proof, we know deterministically that $|\ln Z_{\G'}-\ln Z_{\G''}|\leq CX$.
Moreover, the construction of $\G',\G''$ ensures that $X$ has a bounded mean.
Therefore, Markov's inequality and Lemma~\ref{Claim_final} ensure that
	$\Erw\ln(Z_{\G'}/Z_{\G''})=(n+1)^{-1}\Erw[\cB_{\hat\G}]+O(\eps)$.
Hence, by (\ref{eqZreg}), Claim~\ref{claim:hatgconting} and because $\ln Z_{\reg_n^\eps}=O(n)$ with certainty,
	\begin{equation}\label{eqProofThm_regRSBethe}
	\Erw\ln\bc{Z_{\reg^\eps_{n+1}}/Z_{\reg^\eps_{n}}}=(n+1)^{-1}\Erw[\cB_{\hat\G}]+O(\eps)=
		(n+1)^{-1}\Erw[\cB_{\reg^\eps_{n+1}}]+O(\eps).
	\end{equation}
Finally, combining (\ref{eqProofThm_regRSBethe}) with  Lemmas~\ref{Claim_final} and \ref{Claim_Lipshitz}, we obtain
	\begin{align*}
	\lim_{n \to \infty} \frac1n\Erw[\ln Z_{\reg}] &=  \lim_{\eps\searrow0} \lim_{n \to \infty} \frac{1}{n} \Erw[\ln Z_{\reg^\eps}] 
	= \lim_{\eps\searrow0} \lim_{n \to \infty} \Erw\ln(Z_{\reg^\eps_{n+1}}/Z_{\reg^\eps_{n}}) 
	 = B,
	\end{align*}	
as desired.
\end{proof}

\section{Non-reconstruction}\label{Sec_nonre}

\begin{proof}[Proof of \Lem~\ref{prop:nonreconstruction}]
 Let $\G_n$ be distributed according to either $\G_n(d,\Omega,k,\Psi,\rho)$ or $\reg_{n,\mathrm{reg}}^\eps(d,\Omega,k,\Psi,\rho)$.
 Assume that (\ref{eqNonReconstruction}) holds but (\ref{eq:rsnonrecon}) does not. Then there exist $\omega_1,\omega_2\in\Omega$ and $0<\del<\frac{1}{10}$ such that for infinitely many $n$ we have
	\begin{align}\label{eqThm_nonre2}
	\pr\brk{\abs{\bck{\vecone\{\SIGMA\bc{x_1}=\omega_1\}|\SIGMA\bc{x_2}=\omega_2}_{\mu_{\G_n}}-
		\bck{\vecone\{\SIGMA\bc{x_1}=\omega_1\}}_{\mu_{\G_n}}
		}>2\del,\ \bck{\vecone\{\SIGMA\bc{x_2}=\omega_2\}}_{\mu_{\G_n}}>2\del}>2\del.
	\end{align}
Let $\ell$ be a large enough integer and let $\cE$ be the event that the distance between $x_1,x_2$ in $\G_n$ is greater than $2\ell$.
The distribution of $\G_n$ is symmetric with respect to the variables, and the factor graph is sparse: the expected number of variables nodes within distance $2 \ell$ of $x_1$ is $O((kd)^\ell)$, constant with respect to $n$, and so we have $\pr\brk{\cE}=1-o(1)$ as $n\to\infty$.
Therefore, (\ref{eqThm_nonre2}) implies
	\begin{align}\label{eqThm_nonre3}
	\pr\brk{\abs{\bck{\vecone\{\SIGMA\bc{x_1}=\omega_1\}|\SIGMA\bc{x_2}=\omega_2}_{\mu_{\G_n}}-
		\bck{\vecone\{\SIGMA\bc{x_1}=\omega_1\}}_{\mu_{\G_n}}}>\del,\ 
		\bck{\vecone\{\SIGMA\bc{x_2}=\omega_2\}}_{\mu_{\G_n}}>\del,
		\ \cE}>\del.
	\end{align}
To complete the proof, let $\cS$ be the set of all $\sigma\in\Omega^{V(\G_n)}$ such that $\sigma\bc{x_2}=\omega_2$.
If the event $\cE$ occurs, then given $\nabla_\ell(\G_n,x_1,\sigma)$ the value assigned to $x_2$ is fixed for all $\sigma\in\cS$.
Therefore, (\ref{eqThm_nonre3}) implies
	\begin{align*}
	\Erw\bck{\TV{\mu_{\G_n, x_1}-\mu_{\G_n, x_1}\brk{\nix|\nabla_\ell(\G_n,x_1,\SIGMA)}}}_{\mu_{\G_n}}
		&\geq\Erw\brk{\vecone\{\cE\}\bck{\TV{\mu_{\G_n, x_1}-\mu_{\G_n, x_1}\brk{\nix|\nabla_\ell(\G_n,x_1),\SIGMA}}|\cS}_{\mu_{\G_n}}
			\hspace{-2mm}\bck{\vecone\{\cS\}}_{\mu_{\G_n}}}\geq\del^3,
	\end{align*}
in contradiction to (\ref{eqNonReconstruction}).
\end{proof}


\begin{thebibliography}{29}

\bibitem{ANP}
D.\ Achlioptas, A.\ Naor, and Y.\ Peres: Rigorous location of phase transitions in hard optimization problems. Nature \textbf{435} (2005) 759--764.

\bibitem{Aizenman}
M.\ Aizenman, R.\ Sims, S.\ Starr: An extended variational principle for the SK spin-glass model.
Phys.\ Rev.\ B {\bf68}  (2003) 214403.

\bibitem{Victor}
V.\ Bapst, A.\ Coja-Oghlan: Harnessing the Bethe free energy.
arXiv:1504.03975 (2015).

\bibitem{Victor2}
V.\ Bapst, A.\ Coja-Oghlan: The condensation phase transiontion in the regular $k$-SAT model.
arXiv:1507.03512 (2015).

\bibitem{Cond}
V.\ Bapst, A.\ Coja-Oghlan, S.\ Hetterich, F.\ Rassmann, D.\ Vilenchik:
The condensation phase transition in random graph coloring.
Communications in Mathematical Physics {\bf341} (2016) 543--606.

\bibitem{bayati}
M.~Bayati, D.~Gamarnik, P.~Tetali:
Combinatorial approach to the interpolation method and scaling limits in sparse random graphs.
Annals of Probability {\bf41} (2013) 4080--4115.

\bibitem{Nayantara}
N.\ Bhatnagar, A.\ Sly, P. \ Tetali: Reconstruction threshold for the hardcore model.
Proc.\ 14th RANDOM (2010) 434--447.

\bibitem{Nor}
A.\ Coja-Oghlan, C.\ Efthymiou, N.\ Jaafari:
Local convergence of random graph colorings. Proc.\ 19th RANDOM (2015) 726--737.

\bibitem{COPS}
A.\ Coja-Oghlan, W.\ Perkins, K.\ Skubch:
Limits of discrete distributions and Gibbs measures on random graphs.
arXiv:1512.06798 (2015).

\bibitem{Charis}
C.\ Efthymiou:
Reconstruction/non-reconstruction thresholds for colourings of general Galton-Watson trees.
Proc.\ 19th RANDOM (2015) 756--774.

\bibitem{DM}
A.\ Dembo, A.\ Montanari:
Gibbs measures and phase transitions on sparse random graphs.
Braz.\ J.\ Probab.\ Stat.\ {\bf24} (2010) 137--211.

\bibitem{demboPotts}
A.\ Dembo, A.\ Montanari, A.\ Sly, N.\ Sun:
The replica symmetric solution for Potts models on d-regular graphs.
 Communications in Mathematical Physics {\bf 327} (2014) 551--575.

\bibitem{dembo}
 A.\ Dembo, A.\ Montanari, N.\ Sun:
  Factor models on locally tree-like graphs.
 Annals of Probability {\bf 41} (2013) 4162--4213.

\bibitem{gamarnikKatz}
D.~Gamarnik, D.~Katz:
Sequential cavity method for computing free energy and surface pressure.
Journal of Statistical Physics {\bf 137} (2009) 205--232.

\bibitem{GM}
A.~Gerschenfeld, A.~Montanari: Reconstruction for models on random graphs.
Proc.\ 48th FOCS (2007) 194--204.

\bibitem{JLR}
S.~Janson, T.~{\L}uczak, A.~Ruci\'nski: Random Graphs, Wiley  2000.
%

\bibitem{Kroc}
L.~Kroc, A.~Sabharwal, B.~Selman:
Message-passing and local heuristics as decimation strategies for satisfiability.
Proc\ 24th SAC (2009) 1408--1414.

\bibitem{pnas}
F.~Krzakala, A.~Montanari, F.~Ricci-Tersenghi, G.~Semerjian, L.~Zdeborov\'a:
Gibbs states and the set of solutions of random constraint satisfaction problems.
Proc.~National Academy of Sciences {\bf104} (2007) 10318--10323.

\bibitem{kschischang2001factor}
F.~Kschischang, B.~Frey, H.~Loeliger:
Factor graphs and the sum-product algorithm.
IEEE Transactions on Information Theory {\bf 47}  (2001) 498--519.

\bibitem{Lovasz}
L. \Lovasz: Large networks and graph limits.
Colloquium Publications {\bf 60} (2012), AMS.

\bibitem{MM}
M.~M\'ezard, A.~Montanari:
Information, physics and computation.
Oxford University Press~2009.

\bibitem{mezard1990spin}
M.~M\'ezard, G.~Parisi, M.~Virasoro:
Spin glass theory and beyond.
World Scientific~1987.

\bibitem{montanari2011reconstruction}
A.~Montanari, R.~Restrepo, P.~Tetali:
Reconstruction and clustering in random constraint satisfaction problems.
SIAM Journal on Discrete Mathematics {\bf 25} (2011) 771--808.

\bibitem{MooreMertens}
C.\ Moore, S.\ Mertens: The nature of computation. Oxford University Press (2011).

\bibitem{Mossel}
E.\ Mossel, J.\ Neeman, A.\ Sly: Reconstruction and estimation in the planted partition model.
Probability Theory and Related Fields (2014) 1--31.

\bibitem{Panchenko}
D.\ Panchenko:
Spin glass models from the point of view of spin distributions.
Annals of Probability {\bf 41}  (2013) 1315--1361.

\bibitem{Panchenko2}
D.\ Panchenko:
Structure of finite-RSB asymptotic Gibbs measures in the diluted spin glass models.
Journal of Statistical Physics {\bf162} (2016) 1--42.

\bibitem{PanchenkoBook}
D.\ Panchenko:
The Sherrington-Kirkpatrick model.
Springer 2013.

\bibitem{Rudi}
T.\ Richardson, R.\ Urbanke: Modern coding theory. Cambridge University Press (2008).

\bibitem{Szemeredi}
E.\ \Szemeredi: Regular partitions of graphs. Colloq.\ Internat.\ CNRS {\bf 260} (1978) 399--401.

\bibitem{YFW}
J.\ Yedidia, W.\ Freeman, Y.\ Weiss: Constructing free-energy approximations and generalized Belief Propagation algorithms.
IEEE Transactions on Information Theory {\bf 51} (2005) 2282--2312.

\end{thebibliography}
\end{document}